\documentclass[12pt]{amsart}

\usepackage{amsmath,amsthm,amscd,amsfonts,wasysym,amssymb,epic,eepic,bbm,tikz-cd,mathtools,multicol,enumitem,mathrsfs}
\usepackage[pagebackref,colorlinks=true,linkcolor=blue,citecolor=blue]{hyperref}
\usepackage[noabbrev]{cleveref}
\usepackage{MnSymbol}
\makeatletter
\newcommand\myitem[1][]{\item[#1]\refstepcounter{dummy}\def\@currentlabel{#1}}
\makeatother

\allowdisplaybreaks
\setlength{\topmargin}{0truein} \setlength{\headheight}{.25truein}
\setlength{\headsep}{.25truein} \setlength{\textheight}{9truein}
\setlength{\footskip}{.25truein} \setlength{\oddsidemargin}{0truein}
\setlength{\evensidemargin}{0truein}
\setlength{\textwidth}{6.5truein} \setlength{\voffset}{-0.5truein}
\setlength{\hoffset}{0truein}

\vfuzz2pt 
\hfuzz2pt 
\newtheorem{thm}{Theorem}[section]
\newtheorem{cor}[thm]{Corollary}

\newtheorem{prop}[thm]{Proposition}
\newtheorem{remark}[thm]{Remark}

\newcounter{remarkscounter}



\numberwithin{equation}{section}
\newcommand{\A}{\mathbb{A}}
\newcommand{\GL}{\mathrm{GL}}

\newcommand{\Gal}{\mathrm{Gal}}

\makeatletter
\newcommand{\rprod}{%
  \DOTSB             
  \mathop{\mathpalette\rprod@\relax}\slimits@}
\newcommand{\rprod@}[2]{%
  \ooalign{$\m@th#1\prod$\cr$\m@th#1\coprod$\cr}}
\makeatother

\newcommand{\quash}[1]{}

\theoremstyle{definition}

\numberwithin{equation}{subsection}

\newcommand{\op}{\mathrm{op}}

\def\L{{\mathcal L}}
\def\G{{\mathbb G}}

\def\N{{\mathbb N}}

\def\Aut{{\mathrm {Aut}\,}}
\def\End{{\mathrm {End}}}
\def\Gal{{\mathrm {Gal}}}
\def\Id{{\mathrm {Id}}}
\def\ker{{\mathrm {ker}\,}}
\def\Ker{{\mathrm {Ker}\,}}

\def\im{{\mathrm {im}\,}}

\def\gp{{\mathrm {gp}}}

\def\Spec{{\mathrm {Spec}}}

\def\dim{{\mathrm{dim}}}

\def\GL{{\mathrm{GL}}}

\def\C{{\mathbb{C}}}
\def\F{{\mathbb{F}}}

\def\A{{\mathbb{A}}}
\def\Z{{\mathbb{Z}}}
\def\Q{{\mathbb{Q}}}
\def\P{{\mathbb{P}}}

\def\Res{{\mathrm{Res}}}

\def\Ext{{\mathrm{Ext\,}}}

\def\Pic{{\mathrm{Pic}}}
\def\Set{{\mathrm{Set}}}

\def\Div{{\mathrm{Div}}}
\def\Frob{{\mathrm{Frob}}}

\def\op{{\mathrm{op}}}

\def\ab{{\mathrm{ab}}}


\linespread{1.2}

\allowdisplaybreaks

\begin{document}
\linespread{1.2}

\title[Log Geometry \& GCF]{Logarithmic Geometry and Geometric Class Field Theory}

\author{Aaron Slipper}
\address{Department of Mathematics\\
Duke University\\
Durham, NC 27708}
\email{aaron.slipper@duke.edu}

\begin{abstract}
    We demonstrate an application of logarithmic geometry in the context of geometric Langlands, by providing a logarithmic upgrade of Deligne's geometric class field theory for tamely ramified Galois groups. In particular, we define a framed logarithmic Picard space, and show that a logarithmic compactification of the classical tamely ramified Div-to-Pic map has, for sufficiently large degree, log simply connected fibers given by logarithmically compactified vector spaces. This provides a canonical bijection between local systems on the curve with divisorial log structure and multiplicative local systems on the framed logarithmic Picard (a logarithmic version of the Hecke eigensheaf correspondence of geometric Langlands for $\GL_1$). We use this to re-derive tamely ramified global Artin reciprocity for function fields, and show that logarithmic geometry allows for a geometric interpretation of local-to-global compatibility at \textit{all} places (in addition to the unramified places). 
\end{abstract}

\maketitle

\setcounter{tocdepth}{2}
\tableofcontents

\section{Introduction}\label{Intro}

\subsection{Motivation and History} Let $X$ be a smooth, projective curve over a finite field $k$. The centerpiece of geometric class field theory is a canonical equivalence between the category of 1-dimensional $\ell$-adic local systems on $X$ and 1-dimensional multiplicative $\ell$-adic local systems on $\Pic(X)$. This in turn provides a canonical equivalence between 1-dimensional representations of the (abelianized) \'etale fundamental group of $X$ and the 1-dimensional $\ell$-adic representations of $\Pic_X(k)$. From this one can deduce (unramified) global Artin reciprocity for the function field $K := K(X)$.

Deligne provided an elegant geometric argument for the first of the above equivalences (see \cite{toth2011geometric} for an excellent discussion). It can be summarized as follows. Let $X^{(n)}$ denote the $n$th symmetric power of $X$, whose functor of points represents divisors of degree $n$ on $X$. We observe that, for sufficiently large $n$, the map $X^{(n)} \to \Pic^{n}_X$ which sends a divisor to the corresponding invertible sheaf, is a $\P^N$ fibration (with $N$ an integer depending on $n$ and the genus of $X$). Since $\P^N$ is simply connected, we find that the \'etale fundamental groups of $X^{(n)}$ and $\Pic^n_X$ are isomorphic for $n\gg 0$. 

We may apply this observation as follows. Given a 1-dimensional local system $\mathfrak{L}$ on $X$, one can canonically extend it to a multiplicative local system on $X^{(n)}$ for all $n$: take the $n$-fold product $\boxtimes^n \mathfrak{L}$ on $X^n$, push forward along the finite map $p_n: X^n \to X^{(n)}$, and take $S_n$-invariants. The isomorphism of $\pi_1^{et}\,X^{(n)}$ with $\pi_1^{et}\, \Pic^n_X$ 
for sufficiently large $n$ implies that for $n \gg 0$ we may uniquely transport $\mathfrak{L}^{(n)}: = \left(p_{n*}\boxtimes^n \mathfrak{L}\right)^{S_n}$ to a 
local system on $\Pic^n_X$. We can then utilize the multiplicative structure of $\Pic(X)$ to (uniquely) extend this family of local systems $\mathfrak{L}^{(n)}$ to a multiplicative local system on \textit{all} of $\Pic = \bigsqcup_{n \in \Z} \Pic^n$. 

On the other hand, given a 1-dimensional multiplicative local system on $\Pic_X$, we may pull it back to $X$ along the Abel-Jacobi map $X \to \Pic^1_X: \, x \mapsto O(x)$. This establishes the canonical correspondence between 1-dimensional local systems on $X$ and 1-dimensional multiplicative local systems on $\Pic_X$, which may be viewed as the ``Hecke eigensheaf" correspondence in geometric Langlands for $\GL_1$ \cite{BeilinsonDrinfeldHecke}.

Deligne's correspondence can be used as the critical ingredient in proving global class field theory for the function field $K$. In particular, it permits one to show that the Artin map, sending $x \in \Div_X(\F_q)$ to the (arithmetic) Frobenius $\Frob_x \in \pi_1^{\mathrm{ab}}(X) \cong \Gal(K^{\ab, \mathrm{ur}}/K)$, factors through $\Pic_X(\F_q)$. Interpreting $\Pic_X(\F_q)$ (via the Weil uniformization) as an id\`elic double quotient, this is equivalent to showing that the reciprocity map $\mathbb{I}_K/\mathbb{O}_K \to \Gal(K^{\ab, \textrm{ur}}/K)$ is \textit{automorphic} -- i.e., factors through global points $K^{\times} \hookrightarrow \mathbb{I}_K$. From this we may recover the usual formulation of unramified global class field theory for $K$.

The tamely ramified case of geometric class field theory can be treated similarly, although the argument is slightly more complicated. Here we pick a reduced closed subscheme $S \subset X$ (i.e., a finite union of points) which we declare to be our branching locus, and we consider 1-dimensional representations of the tamely ramified \'etale fundamental group over $X(S)$.

The analogue of $\Pic$, called $\Pic_{X,S}$ (with $S \subset X(k)$ the branching locus), represents (on $k$-points) invertible sheaves $\mathcal{L}$ on $X$ with a fixed rigidification $\mathcal{L}|_s \cong \A^1$ for each closed point $s \in S$. This is a semi-abelian variety (in particular, it is a commutative group scheme but not necessarily proper). 

Letting $U \subset X$ denote the open complement of $S$, one considers the map $U^{(n)} \to \Pic_{X,S}^n$ from the symmetric powers of $U$ to $\Pic_{X,S}$. But this map is not proper; one must construct a relative compactification $U^{(n)} \subset \overline{U^{(n)}} \to \Pic_{X,S}$ where $\overline{U^{(n)}} \to \Pic_{X,S}$ is a proper $\P^N$ fibration. One must then show that a 1-dimensional local system on $U^{(n)}$ extends (under direct image) to local systems on $\overline{U^{(n)}}$ \cite{toth2011geometric}. This then allows one to repeat Deligne's simply connectedness argument proving a canonical equivalence between multiplicative 1-dimensional local systems on $\Pic_{X,S}$ and tamely ramified 1-dimensional local systems on $U$.

Having accomplished this, one finds that for $k = \F_q$, the reciprocity map sending $x \in U(\F_q)$ to $\Frob_x$ factors through $\Pic_{X,S}(\F_q)$. As in the unramified setting, this leads one to a proof of global tamely ramified class field theory.

A raison d'\^{e}tre of logarithmic geometry is to provide a canonical means of compactifying moduli spaces, while preserving relevant structures on the boundary locus. In the above setting, we ``upgrade" the usual $X^{(n)} \to \Pic_X^n$ to a ``log" version: 

\[
 X^{(n)}_{\log} \to \overline{J}_{X, S}^n,
\]

\noindent where $\overline{J}_{X, S}^n = \overline{\Pic}_{X,S}^{n}$ is the degree-$n$ component of the log-compactified Picard and $X^{(n)}_{\log}$ is the log-scheme associated to the divisor $S^{(n)} := \{\{x_1, \ldots, x_n\}\in X^{(n)}: x_i \in S \text{ for some }i\}$ in $X^{(n)}$. This provides a (log)-proper map, whose geometry is interesting in its own right. For $n=1$, we get a map $X(\log S) \to \overline{J}_{X,\log S}^1$, the analogue of the Abel-Jacobi map.

Moreover, we will show that for sufficiently large $n$, the above map has log-simply connected fibers (given, in fact, by log-compactified vector spaces). Repeating Deligne's argument for unramified class field theory (in particular, bypassing ad hoc compactifications), we find that there is a canonical isomorphism between local systems on $X(\log S)$ and multiplicative local systems on $\overline{\Pic}_{X,S}$. 

Local systems on $X(\log S)$ 
correspond to representations of 
$\pi_{1}^{\log}X(\log S)$. 
This log-fundamental group is in turn isomorphic to the tame (Grothendieck-Murre) fundamental group of $X$ with branching locus $S$ \cite{GrothendieckMurre1971}. The equivalence between the log fundamental group of a scheme with divisorial log structure and the tamely ramified fundamental group of Grothendieck-Murre is known \cite{IllusieLogEt, FujKat, Gabber:Ramero}, though exact references have historically been tricky to locate. We review this issue, offering a proof of this fact, in Section \ref{TameLog}.

We thus see that logarithmic geometry naturally lends itself to tamely ramified geometric class field theory in two ways: firstly, through providing a good means of compactifying the map $U^n \to \Pic_{X,S}^n$, and secondly, through $X(\log S)$ having a fundamental group that is ``automatically" the tamely ramified fundamental group. Moreover, we find that by introducing logarithmic structures, we may offer a geometric interpretation of the ramified places $\widehat{K_s}^{\times}/\widehat{\mathcal{O}_s}^{\times(1)}\hookrightarrow K^{\times}\backslash\mathbb{I}_K/\mathbb{O}_{K,S}^{\times} \to \Gal(K^{t,\ab}/K)$ from local class field theory. Thus the use of logarithmic geometry allows us to put ramified places on par with the unramified places in geometric class field theory.

We close the introduction by situating this paper among several closely related geometric approaches to ramified class field theory. Campbell--Hayash formulate a broad Albanese and Cartier-duality framework for maps from a smooth curve to commutative group stacks, and use it to reprove local and global geometric class field theory with arbitrary ramification \cite{CampbellHayash2021CartierDuality}. Guignard generalizes Deligne's argument to relative curves with arbitrary ramification, using rigidified Picard schemes and a relative form of geometric global class field theory \cite{Guignard2019RamifiedRelativeCurves}. Takeuchi gives another proof for curves over a perfect field by compactifying Abel-Jacobi maps through blowups of symmetric powers; unlike Deligne's original symmetric-power argument, this treats arbitrary bounded ramification, expressed in terms of a modulus and Swan-conductor bounds \cite{Takeuchi2019BlowUps}. The present paper is complementary to these works: we restrict to the tame, divisorial-logarithmic setting and stay close to Deligne's symmetric-power argument, but replace auxiliary compactifications by a logarithmic Picard space and a logarithmic Div-to-Pic map whose fibers are log-compactified vector spaces. In particular, the ramification divisor is built into both the curve and the Picard side as logarithmic structure; this is the point at which logarithmic geometry enters the construction, and it is also what makes the local-to-global compatibility at the ramified points visible geometrically.

\subsection{Outline of This Paper} We begin in Section \ref{LogGeo} by reviewing what we will need from the theory of fs log schemes. We discuss the functor $\G_m^{\log}$ and the notion of log properness, which is defined through valuative criteria (and first appeared in the relatively recent \cite{MolchoWise2018TheLP}). In particular, we discuss why $\G_m^{\log}$ is proper.

\vspace{3mm}

In Section \ref{Kummerpi1etal} we review the basics of the Kummer \'etale topology, leading up to a discussion and proof of:

\vspace{3mm}

\textbf{Corollary} \ref{FundIso}: 
\textit{Let $X$ be a locally Noetherian, regular scheme with normal crossings divisor $D$. Let $\widetilde{x}$ be a log geometric point over $ x \in D$, with image $\overline{x}$ as a geometric point of $D$. Then we have a canonical isomorphism }
    
    \[
    \pi_1^{\log}(X(\log D), \widetilde{x})  \cong \pi_1^t(U, \overline{x}),
    \]

    \noindent \textit{where the right hand side is the tamely ramified \'etale fundamental group of Grothendieck-Murre.}

\vspace{3mm}

\noindent We conclude this section with a discussion of the homotopy exact sequence for $\pi_1^{\log}$ in the context of fs log schemes and more general logarithmic spaces, and we discuss in particular the fundamental group of $\G_m^{\log}$.

\vspace{3mm}

In Section \ref{compvecsp}, we define the notion of a compactified vector bundle, and show that the projection map induces an isomorphism of the total space with the base (\ref{simpleconnfiber}).

\vspace{3mm}

In Section \ref{LogPicRig}, we introduce the rigidified logarithmic Picard $\overline{\Pic}_{X,S} = \sqcup_n \overline{J}_{X,S}^n$ for a smooth projective curve $X$ with fixed ramification locus $S$. We show 

\vspace{3mm}

\textbf{Proposition} \ref{propernessJac}.  \textit{For each $n$, $\overline{J}_{X,S}^n$ is log proper. In particular $\overline{J}_{X,S} =\overline{J}_{X,S}^0$ is a log-proper abelian group-valued functor on the category of fs log schemes.}

\vspace{3mm}

We then discuss the notion of character sheaves on abelian group spaces in the category of log schemes. We show (Proposition \ref{MultSysGmlog}) that character sheaves on the classical rigidified Picard uniquely extend to the log compactification $\overline{\Pic}_{X,S}$.

\vspace{3mm}

In Section \ref{GeomSymPic}, we prove our main theorem regarding the geometry of the map $X^{(n)}_{\log} \to \overline{J}_{X,S}^{n}$:

\vspace{3mm}
 \textbf{Theorem} \ref{mainthm}. \textit{Say $|S|>1$, and $n \ge 2g-1 + |S|$. The map $\psi_n: X^{(n)}_{\log} \to \overline{J}_{X,S}^{n}$ is surjective; moreover, it is valuative-locally a compactified vector bundle, with fibers of dimension $\dim H^0(X, \mathcal{L}) - |S|$ for (any) $\mathcal{L} \in \Pic_X^n(k)$.}
\vspace{3mm}

In Section \ref{ClassClassfieldtheory}, we prove

\vspace{3mm}
\textbf{Theorem} \ref{LogGCFT}. \textit{Let $\overline{x}$ be a geometric point of $X$ lying over a rational point $x \in X(k)$. There is a canonical equivalence between}
    
    \textit{1) 1-dimensional $\ell$-adic local systems $\mathfrak{L}$ on $X(\log S)$, with a fixed isomorphism $\mathfrak{L}_{\overline{x}} \simeq \overline{\mathbb{Q}_{\ell}}$; and} 
    
    \textit{2) multiplicative 1-dimensional $\ell$-adic local systems (character sheaves) on $\overline{\Pic}_{X,S}$.}
\vspace{3mm}

We then follow Deligne's argument for tamely ramified geometric class field theory, which gives us the familiar version of class field theory.

\vspace{3mm}
\textbf{Theorem} \ref{ClassClassfieldtheory}. \textit{There is a map $\textrm{Artin}: \overline{\Pic}_{X,S}(\F_q) \to \pi_1^{\textrm{log}, \textrm{ab}}(X(\log S))$, that corresponds to the tamely ramified global Artin map under the Weil uniformization}

\[
\overline{\Pic}_{X,S}(\F_q) \simeq K^{\times} \backslash \mathbb{I}_K/\mathbb{O}_{K,S} \simeq K^{\times}_S \backslash \mathbb{I}_K^{(S)}/\mathbb{O}_{K}^{(S)},
\]

\noindent \textit{and under the isomorphism $\pi_1^{\log\textrm{ab}}(X(\log S)) \simeq \Gal\left(K^{t,S,\ab}/K\right)$. This map induces isomorphisms on finite quotients, showing that abelian extensions of $K$ tamely ramified over $S$ correspond to finite-index subgroups of $\Pic_{X,S}(\F_q)$.}
\vspace{3mm}

The new point of this paper is not the statement of tame Artin reciprocity by itself, but rather the logarithmic geometric mechanism which recovers it: the framed logarithmic Picard, the log-compactified Div-to-Pic map, the compactified-vector-space description of its fibers, and the resulting logarithmic interpretation of local-to-global compatibility at ramified places. In Section \ref{locglobcomp}, we discuss how one can interpret the canonical map $\textrm{Rec}_{\textrm{loc}}: \G_m^{\log}(\F_q^{\log}) \to \pi_1^{\log \ab}(\F_q^{\log})$ as the correct analogue of \textit{local} logarithmic geometric class field theory (for $\F_q^{\log}$ the standard log point, see Section \ref{htpylogstack}). For $\log(s) = \F_q^{\log}$ a logarithmic point of the curve, we construct natural maps $i_s: \mathbb{G}_m^{\log}(\log(s)) \to \overline{\Pic}_{X,S}(\F_q)$ (\ref{imap}), and we have a natural map $s_*:\pi_1^{\log \ab}(\F_q^{\log}) \to \pi_1^{\log \ab}(X(\log S))$ (\ref{s*map}). This leads us to

\vspace{3mm}
\textbf{Theorem }\ref{loctoglo}.\textit{ (Logarithmic Local-to-global compatibility.) The following diagram commutes:}

\begin{equation*}
\begin{tikzcd}
\mathbb{G}_m^{\log}(\mathbb{F}_q^{\log}) \arrow[r, "\textrm{Rec}_{\textrm{loc}}"] \arrow[d, "i_s"'] & \pi_1^{\log \textrm{ab}}(\mathbb{F}_q^{\log}) \arrow[d, "s_*"] \\
{\overline{\textrm{Pic}}_{X,S}(\mathbb{F}_q)} \arrow[r, "\textrm{Artin}"]                           & \pi_1^{\log \textrm{ab}}(X(\log S)),                           
\end{tikzcd}
\end{equation*}

\noindent \textit{and this corresponds to the map}

\begin{equation*}
\begin{tikzcd}
\widehat{K_s}^{\times}/\widehat{\mathcal{O}_s}^{\times (1)} \arrow[r, "\textrm{Rec}_{\textrm{loc}}"] \arrow[d] & {\textrm{Gal}\left(\overline{K_s^{t,\textrm{ab}}}/K_s\right)} \arrow[d] \\
{K^{\times} \backslash\mathbb{I}_K/\mathbb{O}_{K,S}} \arrow[r, "\textrm{Artin}"]                               & {\textrm{Gal}\left(\overline{K^{t,\textrm{ab}}}/K\right)}              
\end{tikzcd}
\end{equation*}

\noindent \textit{from class field theory.}

\vspace{3mm}

\subsection{Acknowledgments} I am greatly indebted to Professor Kazuya Kato, who taught me (and the world!) logarithmic geometry and who was invaluable throughout this project -- both in the mathematics itself and in his generous mentorship. I also owe deep thanks to Patrick Kennedy-Hunt for his reading of various drafts of this manuscript and for his many helpful discussions, comments and encouragement on both this project and its potential extensions. An additional thank-you to Dmitri Whitmore for many illuminating meetings and conversations, and to Leo Herr for general inspiration as well as for pointing out some subtleties regarding saturated base change. This material is based upon work partially supported by the National Science Foundation Graduate Research Fellowship under Grant No. 2140001 and by Duke University's Number Theory RTG under DMS-2231514.

\section{Logarithmic Geometry}\label{LogGeo}

\subsection{Conventions} For us, the category $\mathbf{Log\,\, Schemes}$ will refer to saturated (though not necessarily fine) logarithmic schemes. This permits us to have the usual finite inverse and direct limits (e.g., base change), while allowing for more general test objects (e.g., valuative monoids other than $\N$). 

\subsection{The Functor \texorpdfstring{$\mathbb{G}_m^{\log}$}{Gmlog}} We briefly remind the reader of the functor $\mathbb{G}_m^{\log}: \textbf{Log Schemes}^{\op} \to \textbf{Ab}$, given by

\[
\mathbb{G}_m^{\log}(T, \mathcal{M}_T, \alpha) = \mathcal{M}_T(T)^{gp}.
\]

\vspace{3mm}

$\mathbb{G}_m^{\log}$ is not representable (see \cite{MolchoWise2018TheLP}, Proposition 2.2.7.2.). Since it interacts solely with the monoid structure of a test fs log scheme, it is an example of what Professor Kato has called a ``ghost" -- something that only touches the ``spirits" of the log world.

\subsection{Log Properness}\label{logprop} In this section we discuss the notion of ``properness" for log schemes and log spaces, which are defined in terms of a valuative criterion (\cite{MolchoWise2018TheLP}, section 2.2.5.). 

We let $D$ be a log scheme whose underlying scheme is the spectrum of a valuation ring. Recall that this means that $k[D]$ is a domain, with fraction field $K$, such that for any $f \in K$, we have at least one of $f, f^{-1} \in k[D]$.  Say we have a log structure $\mathcal{M}_\eta$ on the generic point (that is, a monoid, which we shall somewhat abusively call $\mathcal{M}_\eta$, and a map $\mathcal{M}_\eta \to K$ such that $\alpha^{-1}(K^{\times}) \to K^{\times}$ is an isomorphism). We consider the associated log structure on $D$, whose monoid $\mathcal{M}_D$ is given by the fiber product (taken in the category of monoids, where we remember only the multiplicative monoid structure of the bottom row):

\[
\begin{tikzcd}
\mathcal{M}_D \arrow[d] \arrow[r] & \mathcal{M}_\eta \arrow[d] \\
{k[D]} \arrow[r]                  & K.                         
\end{tikzcd}
\]

\noindent In other words, $\mathcal{M}_D := \alpha^{-1}(k[D])$. We shall call such a log scheme $D$ a log-annulus, since $D$ is something akin to a punctured disk with a logarithmic boundary at the puncture. Note that almost all valuative $D$ are \textit{not} fs -- they are saturated, but, other than those monoids generated by $\N$, are not fine.

Now we can define properness. Given a morphism $\phi$ between two functors $F$ and $G : \textbf{Log Scheme}^{\op} \to \textbf{Sets}$, we say that $\phi$ is \textit{(log) proper} if the diagram

\[
\begin{tikzcd}
F(D) \arrow[d] \arrow[r] & G(D) \arrow[d] \\
F(\eta ) \arrow[r]       & G(\eta)       
\end{tikzcd}
\]

\noindent is Cartesian for every log-annulus $D$ with generic point $\eta$. In particular, if $F$ and $G$ are representable and $\phi$ corresponds to a map between the corresponding logarithmic schemes, we say that this morphism of log schemes is proper. And if $X$ is a log scheme, with structure map $X \to \Spec(\Z)$ (here $\Spec(\Z)$ is given the trivial log structure), then we say that $X$ is proper if the structural morphism is proper.

We will now give two examples of log properness.

\begin{prop}\label{logproperscheme}
    If $X$ is a log scheme whose underlying scheme is proper, then $X$ is log-proper.
\end{prop}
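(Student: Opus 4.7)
The plan is to verify the valuative criterion directly. Since $\Spec(\Z)$ carries the trivial log structure, $\Hom(T,\Spec\Z)$ is a singleton for every log scheme $T$, so log-properness of $X \to \Spec\Z$ reduces to showing that for every log annulus $(D,\eta)$, every morphism of log schemes $\eta \to X$ extends uniquely to a morphism $D \to X$.

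I would first handle the underlying scheme data. Writing $\underline{D} = \Spec R$ for a valuation ring $R$ with fraction field $K$, a morphism $\eta \to X$ restricts to a morphism of schemes $\Spec K \to \underline{X}$; since $\underline{X}$ is proper over $\Spec\Z$, the classical valuative criterion of properness produces a unique extension $\underline{f}: \Spec R \to \underline{X}$.

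Next I would upgrade $\underline{f}$ to a morphism of log schemes. At a point $d \in \underline{D}$ with image $x = \underline{f}(d)$, the stalk $\mathcal{M}_{D,d}$ is by construction the fiber product $\mathcal{M}_{\eta} \times_{K} \mathcal{O}_{D,d}$, i.e.\ the sub-monoid of $\mathcal{M}_{\eta}$ of elements whose $\alpha$-image lies in the local valuation subring $\mathcal{O}_{D,d} \subset K$. From the given morphism $\eta \to X$ together with the fact that $\underline{f}(\eta)$ generizes $x$, I obtain the composite $\mathcal{M}_{X,x} \to \mathcal{M}_{X,\underline{f}(\eta)} \to \mathcal{M}_{\eta}$; from $\underline{f}^{\#}$ together with $\alpha_X$ I obtain $\mathcal{M}_{X,x} \to \mathcal{O}_{D,d}$; and these two maps agree after further composition into $K$ by functoriality of $\alpha$. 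The universal property of the fiber product then produces a unique map $\mathcal{M}_{X,x} \to \mathcal{M}_{D,d}$ compatible with $\alpha_D$. These stalkwise maps glue to a global morphism of sheaves because they all descend from a single global composite $\mathcal{M}_X \to \underline{f}_*\mathcal{M}_D$, and the unit condition for a morphism of log structures is automatic, since a unit of $\mathcal{M}_{X,x}$ is sent via $\underline{f}^{\#} \circ \alpha_X$ into $\mathcal{O}_{D,d}^{\times}$, forcing its image in $\mathcal{M}_{D,d}$ to be a unit.

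Uniqueness of the extension is then immediate: the underlying scheme map is unique by the classical criterion, and the log extension is unique by the fiber product universal property. The argument is oblivious to whether $\mathcal{M}_X$ is fine, since it uses only that $D$ is saturated (and indeed $\mathcal{M}_D = \alpha^{-1}(\mathcal{O}_D)$ is automatically saturated because a valuation ring is normal, so $\alpha(x)^n \in \mathcal{O}_{D,d}$ implies $\alpha(x) \in \mathcal{O}_{D,d}$). The main (and rather mild) obstacle is purely bookkeeping: recording that the various maps into $K$ commute, so that the fiber product universal property applies.
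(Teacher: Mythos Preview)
Your proof is correct and follows essentially the same approach as the paper: extend the underlying scheme morphism via the classical valuative criterion, then use the universal property of the fiber product $\mathcal{M}_D = \mathcal{M}_\eta \times_K \mathcal{O}_D$ to uniquely fill in the log structure map. The only cosmetic difference is that the paper works with sections over a Zariski-affine neighborhood $\Spec(A)$ of the image of $D$, whereas you work stalkwise and then glue; both presentations amount to the same diagram chase.
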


\begin{proof}
We must verify that any map of log schemes $\eta \to X$ extends to $D \to X$. Since the underlying scheme of $X$ is proper, we may extend the underlying maps of schemes $\eta \to X$ to a map $D \to X$. Now we may work locally on a Zariski neighborhood $\Spec(A)$ of the schematic image of $D$ in $X$. We have:

\[
\begin{tikzcd}
                              & \mathcal{M}_D \arrow[dd] \arrow[ld] &                                                                          \\
\mathcal{M}_{\eta} \arrow[dd] &                                     & \mathcal{M}_X(\textrm{Spec}(A)) \arrow[dd] \arrow[ll] \arrow[lu, dashed] \\
                              & {k[D]} \arrow[ld]                   &                                                                          \\
K                             &                                     & A \arrow[ll] \arrow[lu]                                                 
\end{tikzcd}
\]

\noindent where the top layer consists of monoids and the bottom layer consists of rings, and all of the vertical arrows are the corresponding map $\alpha$. (We consider the diagram as a whole to live in the category of monoids, forgetting the ring structure of the bottom layer.) We are trying to fill in the dotted line. However, $\mathcal{M}_D$ is the fiber product $\mathcal{M}_{\eta} \times_K k[D]$ by construction, so by definition there exists a unique dotted line completing the diagram.
\end{proof}

Observe, in particular, that if $X$ is a proper scheme, then $X$ viewed as a log scheme with trivial log structure is also proper. Moreover, if $X$ is proper, so is $X(\log D)$ for a divisor $D$. This helps make precise the intuition that $X(\log D)$ is a compactification of the open set $X \setminus D$; in fact, for $X$ defined over $\C$, the associated Kato-Nakayama space \cite{katonakayama1999log} models $X (\log D)$ as a kind of real blowup of $X(\C)$ along the divisor $D$, which is indeed compact in the usual topology.

We also note that Proposition \ref{logproperscheme} readily upgrades to morphisms:

\begin{prop}
    Say $f: X \to Y$ is a map of fs log schemes, then $f$ is log proper iff it is proper viewed as a map of schemes.
\end{prop}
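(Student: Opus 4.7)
My plan is to prove both implications separately. For the direction that properness of the underlying map implies log properness, I would essentially upgrade the argument of Proposition \ref{logproperscheme} to the relative setting. Given a log annulus $(D, \mathcal{M}_D)$ with generic point $\eta$ and a commuting diagram of log morphisms $\eta \to X$ and $D \to Y$ (compatible after composition with $f$), I first extend the underlying scheme map $\bar\eta \to X$ to $\bar D \to X$ using the classical valuative criterion for the proper map $f$. To promote this to a log morphism, I repeat the fiber-product argument from Proposition \ref{logproperscheme}: the map $\mathcal{M}_X \to \mathcal{M}_\eta$ composed with $\alpha$ factors through $R = \mathcal{O}_D$ (because the just-constructed scheme-theoretic extension $\bar D \to X$ gives a factorization $\mathcal{O}_X \to R \hookrightarrow K$), so it lands in $\mathcal{M}_D = \alpha^{-1}(R)$; this yields the desired log morphism $D \to X$, which is unique by the defining fiber product for $\mathcal{M}_D$.

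For the converse, my plan is to verify the classical valuative criterion for $f$ as a map of schemes by manufacturing log structures to which log properness can be applied. Given a valuation ring $R$ with fraction field $K$ and a commutative diagram of schemes with $\bar\eta = \Spec K \to X$ and $\bar D = \Spec R \to Y$, I would equip $\bar\eta$ with the pullback log structure $\mathcal{M}_\eta$ coming from $X$, and then define $\mathcal{M}_D := \alpha^{-1}(R) \subset \mathcal{M}_\eta$, making $D$ a log annulus with generic point $\eta$. Invoking log properness on the pair $(\eta \to X,\, D \to Y)$ will then hand me a log morphism $D \to X$, whose underlying morphism of schemes is the sought-after extension $\bar D \to X$.

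The principal obstacle here is verifying, in the reverse direction, that the scheme map $\bar D \to Y$ actually lifts to a log morphism $D \to Y$ with respect to the log annulus structure just fabricated --- without this compatibility, log properness cannot be applied to the pair. I plan to check this locally on $Y$: for any local section $m \in \mathcal{M}_Y$ near the image of $\bar D$, its structure-sheaf image $\alpha(m) \in \mathcal{O}_Y$ pulls back along $\bar D \to Y$ to an element of $R \subset K$, so the composite $\mathcal{M}_Y \to \mathcal{M}_\eta$ automatically factors through $\mathcal{M}_D = \alpha^{-1}(R)$, and this factorization packages into the required log morphism $D \to Y$. A minor technical point is that valuation rings more general than DVRs must be allowed, and the log structures $\mathcal{M}_\eta, \mathcal{M}_D$ constructed here will not generally be fine; however, log annuli are by definition only required to be saturated, which is precisely the setting in which the paper works, so no difficulty arises.
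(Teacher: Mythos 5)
Your proposal is correct, and the forward direction is exactly the relativization of Proposition \ref{logproperscheme} that the paper has in mind when it says the result ``readily upgrades to morphisms'' (the paper in fact omits the proof entirely); your converse, fabricating a log annulus from an arbitrary valuation ring by pulling back $\mathcal{M}_X$ to $\eta$ and setting $\mathcal{M}_D = \alpha^{-1}(R)$, is the natural complementary argument, and your checks that the scheme map $\bar D \to Y$ promotes to a log morphism and that the resulting $\mathcal{M}_D$ stays saturated (valuation rings being integrally closed) are the right ones. The only caveat worth recording is that the converse direction, as with any purely valuative argument, can only establish the existence-and-uniqueness half of the classical criterion: since ``proper'' for schemes also requires finite type and quasi-separatedness, and the paper's definition of log properness sees none of that, the ``log proper $\Rightarrow$ proper'' implication should implicitly carry a finite-type hypothesis on the underlying morphism --- a gap in the statement rather than in your proof.
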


It turns out that our favorite nonrepresentable functor is also log proper:

\begin{prop}\label{logproperGm}
$\G_m^{\log}$ is proper.
\end{prop}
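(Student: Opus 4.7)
The plan is to unpack what log properness of the structural morphism $\mathbb{G}_m^{\log} \to \Spec(\Z)$ (with the trivial log structure on $\Spec(\Z)$) amounts to. Since $\Spec(\Z)(D) = \Spec(\Z)(\eta) = \{*\}$ for any log-annulus $(D, \eta)$, the Cartesian-diagram condition from Section \ref{logprop} collapses to the single requirement that $\mathbb{G}_m^{\log}(D) \to \mathbb{G}_m^{\log}(\eta)$ be a bijection. By the definition $\mathbb{G}_m^{\log}(T) = \mathcal{M}_T(T)^{gp}$ this is exactly the assertion that the natural map $\mathcal{M}_D^{gp} \to \mathcal{M}_\eta^{gp}$ is a bijection, where $\mathcal{M}_D = k[D] \times_K \mathcal{M}_\eta$ is the submonoid of $\mathcal{M}_\eta$ specified by the log-annulus construction.

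I would dispose of injectivity first: because $k[D] \hookrightarrow K$ is injective (as $k[D]$ is a domain with fraction field $K$), the fiber product $\mathcal{M}_D$ injects into $\mathcal{M}_\eta$. It therefore suffices to prove the stronger statement that $\mathcal{M}_\eta \subset \mathcal{M}_D^{gp}$ inside $\mathcal{M}_\eta^{gp}$, from which one gets both inclusions $\mathcal{M}_D^{gp} \subset \mathcal{M}_\eta^{gp} \subset \mathcal{M}_D^{gp}$ and thus equality.

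The substantive step is this surjectivity claim, and here is where the valuation property enters. Given any $m \in \mathcal{M}_\eta$, set $f = \alpha(m) \in K$. If $f = 0$, then trivially $f \in k[D]$, so $m \in \mathcal{M}_D$. Otherwise $f \in K^{\times}$, and by the defining valuative property of $k[D]$ either $f \in k[D]$ (in which case $m \in \mathcal{M}_D$) or $f^{-1} \in k[D]$. In the latter case, $f \in K^{\times}$ combined with the log structure axiom $\alpha^{-1}(K^{\times}) \xrightarrow{\sim} K^{\times}$ forces $m \in \mathcal{M}_\eta^{\times}$; its inverse $m^{-1} \in \mathcal{M}_\eta$ satisfies $\alpha(m^{-1}) = f^{-1} \in k[D]$, so $m^{-1} \in \mathcal{M}_D$ and consequently $m \in \mathcal{M}_D^{gp}$. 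In every case $m \in \mathcal{M}_D^{gp}$, yielding $\mathcal{M}_\eta \subset \mathcal{M}_D^{gp}$ and hence, after passing to group completions, the desired equality.

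There is not really a hard step here; the statement is essentially an unwrapping of the fact that a valuation ring absorbs every element of its fraction field up to inversion, transported through $\alpha$. The only point needing attention is separating the case $\alpha(m) = 0$ from $\alpha(m) \in K^{\times}$ so that one does not accidentally invert zero — both cases land in $\mathcal{M}_D$ or $\mathcal{M}_D^{gp}$, but for different reasons. The saturation (but not necessarily fineness) of $\mathcal{M}_\eta$ plays no role beyond permitting very general test log structures on the generic point, so the argument goes through on the full valuative site used in Section \ref{logprop}.
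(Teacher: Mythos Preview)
Your proof is correct and follows essentially the same approach as the paper: both reduce to the identity $\mathcal{M}_D^{\gp} = \mathcal{M}_\eta^{\gp}$. The paper simply asserts $(\mathcal{M}_\eta \times_K k[D])^{\gp} = \mathcal{M}_\eta^{\gp}$ without justification, whereas you supply the missing detail by invoking the valuation dichotomy $f \in k[D]$ or $f^{-1} \in k[D]$ and the case split on $\alpha(m) = 0$.
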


\begin{proof}
Indeed, a map $\eta \to \G_m^{\log}$ is simply an element $x \in \mathcal{M}_{\eta}^{\gp}$. We are asking if we may lift this to a map $D \to \G_m^{\log}$. Hence we are asking if $x \in \mathcal{M}_D^{\gp}$. Indeed: $(\mathcal{M}_\eta \times_{K}k[D])^{\gp} = \mathcal{M}_\eta^{\gp}$ so we have

\begin{equation}
    \mathcal{M}_D^{\gp} = \mathcal{M}_\eta^{\gp}
\end{equation}

\noindent and in particular $x \in \mathcal{M}_D^{\gp}$.
\end{proof}

\section{The Kummer \'Etale Site, Tame Ramification, and \texorpdfstring{$\pi_1^{\log}$}{pi1log}}\label{Kummerpi1etal}

\subsection{The Kummer \'Etale Site} (Cf \cite{NakLogEtI}, \cite{IllusieLogEt}). Recall that a map $f: X \to Y$ of fs log schemes is said to be log \'etale if locally we may write $f$ as a chart $(u, v,\, \varphi: Q \to P)$ (see \cite{IllusieLogEt}, 1.3), such that

i) kernel and cokernel of $u^{\gp}$ are groups of finite order, with orders invertible on $X$; and 

ii) the map of underlying schemes 

\begin{equation}\label{etsm}
X \to Y \times_{\Spec \, \Z[Q]} \Spec\, \Z[P]
\end{equation}

\noindent is \'etale. (Similarly a map $f: X \to Y$ is log smooth if the kernel and torsion component of the cokernel of $u^{\gp}$ are finite and invertible on $X$, and (\ref{etsm}) is smooth.) As shown by Kato \cite{KatoLogI}, this definition is equivalent to the natural logarithmic analogue of Grothendieck's infinitesimal lifting definition for \'etale (and smooth) morphisms of schemes. And, as we might hope, when $X$ and $Y$ are schemes (i.e., fs log schemes with trivial log structure), then $f$ is log \'etale iff the underlying map of schemes is \'etale.

We say that a map between monoids $h: M \to M'$ is \textit{Kummer} if for all $m' \in M'$, there exists an $n \in \N$ such that $(m')^n \in h(M)$. Observe that this notion really depends on the monoid structure of $h$; it does not merely depend on $h^{\gp}$ as in the definition of log \'etale. A map $X \to Y$ of fs log schemes is called \textit{Kummer} if the induced map on stalks $\mathcal{M}_{Y, f(\overline{x})} \to \mathcal{M}_{X, \overline{x}}$ is Kummer for all geometric points $\overline{x} \in X$. If in addition $f$ is \'etale, we say it is \textit{Kummer \'etale}.

Now we recall the Kummer log \'etale site. For $X$ an fs log scheme, the Kummer \'etale site, $X_{\text{ket}}$, is the category of schemes over $X$ that are Kummer \'etale over $X$. The Kummer \'etale topology is the topology on $X_{\text{ket}}$ generated by covers $u_i: T_i \to T$ such that $T$ is the set-theoretic union of the images of the $u_i$. 

Observe that this is a more restrictive topology than the so-called ``full" (or ``second") log \'etale site of Kato and Nakayama \cite{NakLogII}: ``log blowups" are not permitted in $X_{\text{ket}}$. (Log blowups are quintessential examples of maps that are log \'etale, but not Kummer.) The Kummer topology suffices for our purposes; moreover, unlike the full log \'etale site, it is subcanonical \cite{KatoLogII}.

\subsection{Kummer Covers and Tame Ramification}\label{TameLog}

In this section we will prove the following \cite{IllusieLogEt}:

\begin{thm}\label{CovThm}
    Let $X$ be a log regular fs log scheme, and let $U \subset X$ be the open locus of triviality of the log structure. Then the functor

    \[
    \textup{Kcov(X)} \longrightarrow \textup{Etcov(U)}
    \]
    \[
    Z \mapsto Z \times_X U
    \]

    \noindent from the category of finite Kummer \'etale covers of $X$ to the category of classical \'etale covers of $U$ induces an equivalence between $\textup{Kcov}(X)$ and the full subcategory of $\textup{Etcov}(U)$ consisting of covers $V \to U$ which are tamely ramified along $X - U$; that is, such that if $Z$ is the normalization of $X$ in $V$, at all points with $\dim\, \mathcal{O}_{X,x}= 1$, the restriction of $Z$ to $\Spec \, \mathcal{O}_{X,x}$ is tamely ramified. 
\end{thm}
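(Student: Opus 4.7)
The plan is to establish this equivalence by verifying (i) fully faithfulness of the restriction functor $Z \mapsto Z \times_X U$, (ii) that its image consists of tamely ramified covers, and (iii) essential surjectivity onto tame covers. For fully faithfulness, I would use that log regularity of $X$ implies $X$ is normal, and that Kummer \'etale morphisms preserve log regularity, so any $Z \in \text{Kcov}(X)$ is log regular and in particular normal. The complement $X \setminus U$ has codimension $\geq 1$ (log regularity forces the boundary to be divisorial), so $U$ is dense in $X$ and $Z \times_X U$ is dense in $Z$. Given $Z_1, Z_2 \in \text{Kcov}(X)$, any $U$-morphism on restrictions extends uniquely to an $X$-morphism by a Zariski main theorem argument: take the closure $\Gamma$ of the graph in $Z_1 \times_X Z_2$ and observe that $\Gamma \to Z_1$ is finite and birational, hence an isomorphism by normality of $Z_1$; composing its inverse with the second projection gives the extension.

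For step (ii), I would work locally at a codimension-one point $x \in X \setminus U$. Log regularity ensures that $\mathcal{O}_{X,x}$ is a DVR with uniformizer $\pi$ and that $\overline{\mathcal{M}}_{X,x} \cong \mathbb{N}$ is generated by the class of $\pi$. The Kummer condition forces the stalk of any chart for $Z \to X$ at a point above $x$ to be of the form $\mathbb{N} \hookrightarrow \tfrac{1}{n}\mathbb{N}$ with $n$ invertible on $X$. The log \'etale condition then identifies $Z$ locally with a classical \'etale cover of $\Spec\, \mathcal{O}_{X,x}[t]/(t^n - u\pi)$ for $u$ a unit, which is precisely the standard local form of a tamely ramified extension.

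The principal technical obstacle is step (iii). Given a tame cover $V \to U$, the candidate extension is $Z :=$ the normalization of $X$ in $V$, equipped with the pullback log structure. To verify $Z \to X$ is Kummer \'etale, I would reduce to Kato's local structure theorem for log regular schemes: \'etale-locally $X$ admits a chart $X \to \Spec\, R[P]$ with $R$ regular and $P$ a sharp fs monoid whose induced divisor is $X \setminus U$. A logarithmic Abhyankar lemma then says that after pulling back along the Kummer cover $X' \to X$ corresponding to $P \hookrightarrow \tfrac{1}{n}P$ (for $n$ divisible by all ramification indices of $V$ and invertible on $X$), the cover $V \times_U U' \to U'$ becomes unramified along $X' \setminus U'$ in codimension one, and hence extends to a classical finite \'etale cover of $X'$ by Zariski--Nagata purity (applicable since $X'$ remains regular). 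Descending this extension along the Kummer cover $X' \to X$ via the natural action of $\tfrac{1}{n}P / P$ yields the desired Kummer \'etale structure on $Z \to X$. The genuinely hard points are formulating and proving the right logarithmic Abhyankar lemma (controlling ramification indices globally by using the log structure to prescribe the Kummer root extractions needed) and verifying that the descended cover is Kummer \'etale rather than merely log \'etale; once these are in place, combining (i)--(iii) yields the claimed equivalence.
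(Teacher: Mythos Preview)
Your proposal is correct and follows essentially the same architecture as the paper: both reduce essential surjectivity to Abhyankar's lemma and a descent along a Kummer cover. The paper cites the SGA~I \emph{Absolute Abhyankar Lemma} as a black box (which already packages the extension of $V' \to U'$ to an \'etale cover of $X'$), works affine-locally with $f_1,\dots,f_r$ cutting out the divisor, and descends the log structure explicitly via $\Gamma$-invariant monomials in the $T_i$; it leaves full faithfulness and the ``image is tame'' direction as straightforward. You instead unpack Abhyankar into its constituents (Kummer base change kills codimension-one ramification, then Zariski--Nagata purity extends), frame things via Kato's structure theorem for general log regular schemes rather than the divisorial case, and supply the Zariski main theorem argument for full faithfulness that the paper omits. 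Your version is more self-contained and works at the stated generality of log regular $X$; the paper's version is shorter because it outsources more to SGA~I and implicitly restricts to the divisorial situation relevant to the rest of the paper.
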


This statement appears without proof in Illusie's article  \cite{IllusieLogEt}. Loc. cit. refers to a paper of K. Kato and K. Fujiwara \cite{FujKat} which was, the author has learned from private communication, likely to have been lost. While the author was working on the present project, however, the proof finally appeared in full print in \cite{Gabber:Ramero}, Theorem 13.3.45. We discuss the proof presently.

Theorem \ref{CovThm} is a fairly straightforward consequence of Abhyankar's Lemma. Let $X=\Spec(A)$ be a regular local scheme, $D = \sum_{1\le i \le r} \textrm{div} \, f_i$ a divisor with normal crossings, where the $f_i$ are elements of the maximal ideal of $A$ belonging to a regular system of parameters. We now quote from SGAI, Appendice I \cite{SGAI} (for a proof, see loc. cit):

\begin{prop} \textbf{``The (Absolute) Abhyankar Lemma."}
Let $X$ be a regular local scheme, 
\[
D = \sum_{i=1}^{r} \operatorname{div} f_i
\]
a normal crossings divisor [...], let $Y = \operatorname{Supp} D$, and set $U = X - Y$. Let $V$ be an étale covering of $U$ which is tamely ramified with respect to $D$. For each index $i$, if $x_i$ denotes the generic point of the closed subscheme $V(f_i)$, then $\mathcal{O}_{X,x_i}$ is a discrete valuation ring with fraction field $K_i$, and one has
\[
V\vert_{K_i} = \operatorname{Spec}\Biggl(\prod_{j\in J_i} L_j\Biggr),
\]
where the $L_j$ are finite separable extensions of $K_i$. Denote by $n_j$ the order of the inertia group of a Galois extension generated by $L_j$, and let $n_i$ be the least common multiple of the $n_j$ as $j$ runs over $J_i$. If we set
\begin{equation}\label{KumCov}
X' = X[T_1,\dots,T_r]\,/\,\bigl(T_1^{n_1}-f_1,\dots,T_r^{n_r}-f_r\bigr),
\end{equation}
and
\[
U' = U_{(X')},\quad V' = V_{(X')},\quad \text{etc.},
\]
then the étale covering $V'$ of $U'$ extends uniquely (up to unique isomorphism) to an étale covering of $X'$, and the integers $n_i$ are prime to the residue characteristic $p$ of $X$.
\end{prop}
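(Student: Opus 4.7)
The plan is to reduce the statement to the classical one-variable Abhyankar lemma by invoking Zariski--Nagata purity of the branch locus; the prime-to-$p$ assertion is immediate from the definition of tame ramification. I would first dispatch the prime-to-$p$ claim: by definition, tame ramification of $V/U$ along $D$ means that at each codimension-one point $x_i$ the inertia groups of the constituents $L_j/K_i$ have orders prime to the residue characteristic $p$ of $\mathcal{O}_{X,x_i}$. Hence each $n_j$ and their least common multiple $n_i$ are prime to $p$. With this unit condition in hand, a local computation shows that $A[T_1,\dots,T_r]/(T_1^{n_1}-f_1,\dots,T_r^{n_r}-f_r)$ is a regular local ring whenever $A$ is regular and the $f_i$ extend to a regular system of parameters (the $T_i$ together with the transverse parameters form a new regular system of parameters), so $X'$ is regular and $X' \to X$ is a finite flat Kummer cover ramified exactly over $D$.

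Next I would set up the purity reduction. Let $U' \subset X'$ be the preimage of $U$, which is the complement of the normal-crossings divisor $D' = \sum_i \operatorname{div}(T_i)$ on $X'$. The pullback $V' \to U'$ is finite \'etale. Since $X'$ is regular and noetherian, Zariski--Nagata purity of the branch locus guarantees that an \'etale cover of $U'$ extends to an \'etale cover of $X'$ as soon as it extends across each generic point of $D'$, and uniqueness of such an extension is automatic from normality of $X'$. So it suffices to check the codimension-one statement.

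For the one-dimensional check, fix a generic point $\xi_i'$ of $\operatorname{div}(T_i)$ on $X'$, lying over $\xi_i = x_i$. Localization produces an extension of discrete valuation rings $A := \mathcal{O}_{X, x_i} \hookrightarrow A' := \mathcal{O}_{X', \xi_i'}$, totally ramified of index $n_i$ (since $T_i$ is a uniformizer of $A'$ with $T_i^{n_i} = f_i$ a uniformizer of $A$). By hypothesis, $V$ restricted to $\Spec(K_i)$ is $\bigsqcup_j \Spec(L_j)$ with each $L_j/K_i$ tame of index $n_j \mid n_i$. The classical one-variable Abhyankar lemma then shows that $L_j \otimes_{K_i} \operatorname{Frac}(A')$ splits as a product of unramified extensions of $\operatorname{Frac}(A')$, so $V \times_X \Spec(A')$ admits an \'etale extension over $\Spec(A')$. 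Gluing these codimension-one extensions via purity yields the required \'etale cover of $X'$.

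The main obstacle is really the one-variable Abhyankar step itself: one must verify that base-changing a tamely totally ramified DVR extension along a Kummer extension whose ramification index is a multiple of the original one produces an unramified extension. This boils down to observing that if $L/K$ is totally tamely ramified of index $e$ with uniformizer $\varpi_L$ satisfying $\varpi_L^e = u \pi_K$ for a uniformizer $\pi_K$ of $K$ and a unit $u$, then in $K' = K(\pi_K^{1/e})$ the quotient $\varpi_L / \pi_K^{1/e}$ is a unit whose $e$-th power is $u$, so (using that $e$ is prime to $p$) one can extract a compatible $e$-th root of $u$ in an unramified extension, whence $L \otimes_K K'$ becomes unramified. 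Once this classical input is secured, the rest --- regularity of $X'$, Zariski--Nagata purity, and uniqueness from normality --- follows formally.
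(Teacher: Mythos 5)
Your proposal is correct and is essentially the standard argument; the paper does not prove this proposition itself but quotes it from SGA I, Appendice I, referring the reader there for the proof, and that proof proceeds exactly as you do (regularity of the Kummer cover $X'$, Zariski--Nagata purity to reduce to codimension one, and the one-variable Abhyankar lemma for tame extensions of discrete valuation rings). One cosmetic point: the extension $\mathcal{O}_{X,x_i}\hookrightarrow\mathcal{O}_{X',\xi_i'}$ is not in general \emph{totally} ramified, since the residue field picks up $n_j$-th roots of the images of the other $f_j$; but its ramification index is exactly $n_i$, which is all the one-variable lemma requires.
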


Let $n_1, \ldots n_r$ be a sequence of natural numbers. Assume that $k$ contains all of the $\textrm{LCM}(n_1, \ldots, n_r)$th roots of unity. For an affine $X$, with $f_1, \ldots f_r$ a regular sequence in $k[X]$, let us call a map $X' \to X$ of the form $X' = X[T_1,\dots,T_r]\,/\,\bigl(T_1^{n_1}-f_1,\dots,T_r^{n_r}-f_r\bigr)$, as in  \ref{KumCov}, a \textit{tamely ramified special Kummer cover} of $X$. Restricting this to the \'etale locus $V' \to U$, we shall call such a map simply a \textit{special Kummer cover} of $U$. (Note that this notion is purely scheme-theoretic; it does not involve any log structures.) An immediate consequence of Abhyankar's Lemma is that any \'etale cover $V$ of $U$, such that the normalization of $X$ in $V$ is tamely ramified over $X \setminus U$, can be written as a quotient (under the action of a subgroup $\Gamma$ of the full covering group $\mu_{n_1} \times \cdots \times \mu_{n_r}$) of a special Kummer cover:

\begin{equation}\label{CovDiag}
\begin{tikzcd}
X' \arrow[d] & U' \arrow[l, hook'] \arrow[d, "/\Gamma"] \arrow[d] \arrow[dd, "/\mu_{n_1} \times \cdots \times \mu_{n_r}", bend left, shift left=2] \\
Z \arrow[d]  & V \arrow[d] \arrow[l, hook']                                                                                                        \\
X            & U, \arrow[l, hook']                                                                                                                 
\end{tikzcd}
\end{equation}

\noindent i.e., $V = U'/\Gamma$. Moreover, for $X$ affine, it follows from Galois theory that if $Z$ is the normalization of $V$ in $X$, then $Z = X'/\!/\Gamma$, where the quotient here is the categorical quotient, $X'/\!/\Gamma := \Spec k[X']^{\Gamma}$.

We may now prove Theorem \ref{CovThm}.

\begin{proof} (Of Theorem \ref{CovThm}). The map

 \begin{align}\label{restrmor}
    \textup{Kcov(X)} &\rightarrow \textup{Etcov(U)}\\
    Z &\mapsto Z \times_X U \nonumber
\end{align}

\noindent is stated in the theorem. We now construct the functor in the other direction. Assume without loss of generality that $X$ is affine, and base changing if necessary, that $k$ contains all prime-to-$\textrm{char}(k)$ roots of unity. Consider the divisorial log structure on $X$ with locus of triviality $U$, generated by $f_1, \ldots ,f_r$. (That is to say, $U = \Spec \, k[ X]_{(f_1 \cdots f_r)}$.) Say $V \to U$ is an \'etale covering of $U$ whose normalization $Z$ in $X$ is tamely ramified over the divisor $X \setminus U$. Now we may apply Abhyankar's Lemma, which shows that $V$ is the quotient of a special Kummer cover $U'$ of $U$ by an action of a subgroup $\Gamma \subseteq \mu_{n_1} \times \cdots \times \mu_{n_r}$, as in (\ref{CovDiag}). We have $Z = X'/\!/\Gamma = \Spec\, k[X']^{\Gamma}$). 

We now may enhance $X'$ with the divisorial log structure generated by $T_1 \cdots T_r$; that is, with the log structure associated to the constant monoid $\N^r$ generated by the $T_i$. Sending $f_i \in \mathcal{M}_X$ to $T_i^{n_i} \in \mathcal{M}_{X'}$ defines a Kummer log-\'etale cover of $X$ by $X'$, and we may descend this log-structure under the action of $\Gamma$ to $Z$: consider the multiplicative monoid of $\Gamma$-invariant monomials $T_i$. By $\Gamma$-invariance, these will descend to regular functions on $Z$, which we may use to generate a monoidal log structure on $Z$. This upgrades $Z$ to a Kummer log \'etale cover of $X$. Indeed, it is precisely the log structure on $Z$ that is strictly pulled back from that of $X$.

Checking that this construction is fully faithful and an inverse to \ref{restrmor} is straightforward. 

\end{proof}

\subsection{Kummer \texorpdfstring{$\pi_1^{\log}$}{pi1log}}  We now recall that a log geometric point is a logarithmic scheme $(\overline{s}, M)$ where $\overline{s} = \Spec\, k$ for $k$ a separably closed field, and where $M$ is a monoid such that, for all $n \ge 1$ relatively prime to $\text{char} \,k$, multiplication by $n$ is a bijection on $\overline{M} := M/k^*$. Observe that these are reduced, irreducible log schemes whose underlying scheme is a point, over which there exist no Kummer covers. However, in general, log geometric points are not fine. A logarithmic point of a log scheme $X$ is a map $\overline{s} \to X$ from a log geometric point $\overline{s}$ to $X$; we will somewhat abusively call this morphism $\overline{s}$ as well. 

A natural way to construct a geometric point of $X$ is to take a geometric point of the underlying scheme of $X$ and consider the strict pullback of the log structure to this point; then one considers the inverse limit over all Kummer covers of this geometric-point-with-log-structure.

Given a logarithmic point $\overline{s}$ of a scheme $X$, we say that a Kummer \'etale open $U \to X$ is a Kummer neighborhood of $\overline{s}$ if $\overline{s} \to X$ factors through $U$, in which case we will (somewhat abusively) write $\overline{s} \in U$.  We now define the fiber functor:

\begin{align*}
    \text{Fib}_{\overline{s}}: \text{KCov}_X &\to \Set\\
    \widetilde{X} &\mapsto \pi_0\left(\lim_{\substack {\longleftarrow \\ U\ni \overline{s}}} \widetilde{X} \times_X U
    \right).
\end{align*}

\noindent (The reason we write the limit on the right-hand side as opposed to the cleaner fiber product $\widetilde{X} \times_X \overline{s}$ is because $\overline{s}$ is in general not fine, so this fiber product would leave the category of fs log schemes.) We may then define $\pi_1^{\log}(X, \overline{s})$ as the automorphism group of $\textrm{Fib}_{\overline{s}}$.

Theorem \ref{CovThm} therefore has the following corollary \cite{IllusieLogEt}:

\begin{cor}\label{FundIso}
    Let $X$ be a locally Noetherian, regular scheme with normal crossings divisor $D$. Let $\widetilde{x}$ be a log geometric point over $ x \in D$, with image $\overline{x}$ as a geometric point of $D$. Then we have a canonical isomorphism 
    
    \[
    \pi_1^{\log}(X(\log D), \widetilde{x})  \cong \pi_1^t(U, \overline{x}),
    \]

    \noindent where the right hand side is the tamely ramified \'etale fundamental group of Grothendieck-Murre.
\end{cor}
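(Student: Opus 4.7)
The plan is to deduce the Corollary as a direct application of Grothendieck's Galois formalism to the equivalence of categories already established in Theorem \ref{CovThm}. Both $\pi_1^{\log}(X(\log D), \widetilde{x})$ and $\pi_1^t(U, \overline{x})$ are by definition the automorphism groups of fiber functors on their respective Galois categories, namely $\mathrm{Kcov}(X(\log D))$ and the full subcategory of $\mathrm{Etcov}(U)$ consisting of covers tamely ramified along $D$. Theorem \ref{CovThm} supplies an equivalence
\[
F: \mathrm{Kcov}(X(\log D)) \xrightarrow{\sim} \mathrm{Etcov}^{\,t}(U), \qquad Z \mapsto Z \times_X U,
\]
so the content of the Corollary is the assertion that $F$ carries the fiber functor $\mathrm{Fib}_{\widetilde{x}}$ to a fiber functor naturally isomorphic to $\mathrm{Fib}_{\overline{x}}$.

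First I would unpack the two fiber functors. On the log side, $\mathrm{Fib}_{\widetilde{x}}$ sends $\widetilde{X} \to X(\log D)$ to $\pi_0$ of the strict log henselization, which is realized as the pro-object $\varprojlim_{U \ni \widetilde{x}} \widetilde{X} \times_X U$ indexed by Kummer neighborhoods of $\widetilde{x}$. On the tame side, Grothendieck--Murre's $\mathrm{Fib}_{\overline{x}}$ sends a tame cover $V \to U$ to $\pi_0$ of its base change to the \emph{tame strict henselization} of $X$ at $\overline{x}$ intersected with $U$; concretely this is the pro-\'etale cover obtained by adjoining $n$th roots of the local equations $f_i$ of the branches of $D$ through $x$ for all $n$ prime to the residue characteristic.

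Next I would verify that these two pro-objects correspond under $F$. This is essentially a cofinality statement: by Abhyankar's Lemma, as applied in the proof of Theorem \ref{CovThm}, every tame \'etale neighborhood of $\overline{x}$ in $U$ is dominated by (the restriction to $U$ of) a special Kummer cover of the form $X[T_1,\ldots,T_r]/(T_i^{n_i} - f_i)$. Such Kummer covers are cofinal among Kummer neighborhoods of $\widetilde{x}$ in $X(\log D)$, because the monoid structure of the log geometric point $\widetilde{x}$ is precisely divisible by all $n$ coprime to the residue characteristic, so $\widetilde{x}$ factors through every such Kummer cover in a canonical way. Hence the two pro-systems are cofinal under $F$, and the fiber functors agree up to canonical natural isomorphism.

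The main obstacle is ensuring the strict compatibility at base points when the log geometric point $\widetilde{x}$ lies over $x \in D$ rather than in $U$: one must confirm that the geometric point $\overline{x}$ of $X$ (obtained by forgetting the log structure on $\widetilde{x}$) genuinely serves as a base point for the tame fundamental group in the Grothendieck--Murre sense, even though $\overline{x} \notin U$. This is handled by the Grothendieck--Murre convention of taking fiber functors through the tame strict henselization, and the point is that the choice of Kummer structure on $\widetilde{x}$ --- i.e., the choice of compatible system of $n$th roots of the local equations $f_i$ --- is exactly the extra datum needed to upgrade the geometric point $\overline{x}$ to a tame base point in Grothendieck--Murre's sense. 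Once this identification is made, Grothendieck's theorem that an equivalence of Galois categories preserving fiber functors induces an isomorphism of fundamental groups yields the Corollary.
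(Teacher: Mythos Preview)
Your proposal is correct and follows exactly the paper's approach: the paper's proof is a one-liner, simply ``take the automorphism group of the fiber functors (with respect to $\overline{x}$) of the (equivalent) categories in Theorem \ref{CovThm}.'' You have supplied considerably more detail than the paper does---in particular, your cofinality argument for the compatibility of fiber functors and your treatment of the base-point subtlety at $x \in D$ fill in points the paper leaves implicit.
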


\begin{proof}
    Take the automorphism group of the fiber functors (with respect to $\overline{x}$) of the (equivalent) categories in Theorem \ref{CovThm}.
\end{proof}

This corollary will provide us with the bridge between tamely ramified global class field theory and logarithmic geometry.

\subsection{The Homotopy Exact Sequence for \texorpdfstring{$\pi_1^{\log}$}{pi1log}} As in Deligne's argument for geometric class field theory, we will need to use a homotopy exact sequence for the $\pi_1$'s of a fibration. The logarithmic version of the fundamental exact sequence is due to Y. Hoshi (\cite{Hoshi2009LogHomotopy}, Theorem 2). The proof directly parallels the \'etale case; it follows from a logarithmic analogue of Stein factorization. We will adopt a slightly different notation from loc. cit. (where a related but slightly different notion of ``log geometric point" is used):

\begin{thm}\label{loghtpyseq}
    Let $S$ be a log regular fs log scheme, $X$ a connected fs log scheme, and $f: X \to S$ a log smooth morphism whose underlying map of schemes is proper. Assume, moreover, that $f_*(\mathcal{O}_X) = \mathcal{O}_S$. Let $\overline{x}$ be a geometric point of $X$, and $\overline{s}:= f(\overline{x})$ be its image in $S$. Then we have an exact sequence:

    \begin{equation}\label{htpyexact}
        \lim_{\substack{\longleftarrow \\ U \ni \overline{s}}}\pi_1^{\log}(X\times_{S}U, \overline{x}) \to \pi_1^{\log}(X, \overline{x}) \to \pi_1^{\log}(S, \overline{s}) \to 1.
    \end{equation}
\end{thm}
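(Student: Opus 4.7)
The plan is to adapt Grothendieck's classical proof of the étale homotopy exact sequence (SGA 1, X.1.4) to the Kummer étale topology, relying on three standard results in log geometry: (a) Kummer étale morphisms are log flat, so base change of coherent sheaves is well-behaved; (b) a log version of cohomology and flat base change, showing that the hypothesis $f_*\mathcal{O}_X = \mathcal{O}_S$ is preserved under Kummer étale base change; and (c) a logarithmic Stein factorization, producing for any finite Kummer étale cover $\tilde{X} \to X$ an intermediate factorization $\tilde{X} \to Z \to S$ in which $\tilde{X} \to Z$ is log-fibrewise connected and $Z \to S$ is finite. As in the classical case, the theorem splits cleanly into a surjectivity statement and two middle-exactness inclusions, each of which has a clear Galois-theoretic reformulation.

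For surjectivity of $\pi_1^{\log}(X, \overline{x}) \to \pi_1^{\log}(S, \overline{s})$, Galois theory reduces the claim to showing that every connected finite Kummer étale cover $S' \to S$ has connected pullback $X' := X \times_S S'$. The base-changed morphism $f' : X' \to S'$ is again log smooth with proper underlying scheme map, and by ingredient (b) it satisfies $f'_*\mathcal{O}_{X'} = \mathcal{O}_{S'}$. Since $S'$ is connected, $\Gamma(S', \mathcal{O}_{S'})$ has no nontrivial idempotents, and hence neither does $\Gamma(X', \mathcal{O}_{X'}) = \Gamma(S', \mathcal{O}_{S'})$, so $X'$ is connected.

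Middle exactness amounts to two inclusions. Triviality of the composition is immediate: the factorization $X \times_S U \to U \to S$ induces a factorization $\lim_U \pi_1^{\log}(X \times_S U) \to \lim_U \pi_1^{\log}(U) \to \pi_1^{\log}(S)$, and the middle group is trivial because $\overline{s}$ is a log geometric point — it admits no nontrivial finite Kummer étale covers — and its Kummer neighborhoods form a cofinal system computing its fundamental group. For the opposite inclusion one must show dually that any connected finite Kummer étale cover $\tilde{X} \to X$ whose restriction to $X \times_S U$ trivializes for some Kummer neighborhood $U$ of $\overline{s}$ actually descends to a cover $S' \to S$ with $\tilde{X} \cong X \times_S S'$. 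Apply log Stein factorization to produce $\tilde{X} \to Z \to S$; the triviality hypothesis on a Kummer neighborhood of $\overline{s}$, combined with log smoothness and properness of $f$, forces $Z \to S$ to be Kummer étale, and the universality of Stein factorization together with $f_*\mathcal{O}_X = \mathcal{O}_S$ forces $\tilde{X} \cong X \times_S Z$.

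The principal obstacle is the last step, namely establishing the logarithmic Stein factorization in sufficient generality and checking that the intermediate $Z \to S$ is genuinely Kummer étale (not merely finite) with log structures descending correctly. This is the technical heart of Hoshi's argument and rests on Nakayama's and Illusie's theory of Kummer étale sheaves; once it is in place, the surjectivity and triviality-of-composition steps above are routine transcriptions of the classical Grothendieck template.
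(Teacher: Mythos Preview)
The paper does not actually prove this theorem: it attributes the result to Hoshi (\cite{Hoshi2009LogHomotopy}, Theorem~2) and remarks only that ``the proof directly parallels the \'etale case; it follows from a logarithmic analogue of Stein factorization.'' Your proposal is precisely that parallel --- Grothendieck's SGA~1 template transported to the Kummer \'etale site, with log Stein factorization as the technical core --- so it matches the approach the paper points to. Your identification of the Stein step (and the verification that the intermediate $Z \to S$ is genuinely Kummer \'etale) as the principal obstacle is also accurate and consistent with what the paper flags.
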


\subsection{\'Etale \texorpdfstring{$\pi_1^{\log}$}{pi1log} of logarithmic spaces and stacks}\label{htpylogstack} We will need to upgrade Theorem \ref{loghtpyseq} from fs log schemes to more general logarithmic spaces like $\G_m^{\log}$. This upgrade is entirely formal. We will use the following: if we have a representable, log-smooth map of stacks $f: \mathfrak{X} \to \mathfrak{Y}$, with geometric points $\overline{x} \in \mathfrak{X}$ and $\overline{y} : = f(\overline{x}) \in \mathfrak{Y}$, such that for any $T$-point of $\mathfrak{Y}$ ($T$ an fs log scheme), the pullback $\mathfrak{X} \times_\mathfrak{Y} T \to T$ satisfies the conditions of Theorem \ref{loghtpyseq}, we have a corresponding homotopy exact sequence:

 \begin{equation}\label{htpyexactstack}
        \lim_{\substack{\longleftarrow \\ U \ni \overline{y}}}\pi_1^{\log}(\mathfrak{X}\times_{\mathfrak{Y}}U, \overline{x}) \to \pi_1^{\log}(\mathfrak{X}, \overline{x}) \to \pi_1^{\log}(\mathfrak{Y}, \overline{y}) \to 1.
    \end{equation}

We will now compute $\pi_1^{\log}$ of the log space $\G_m^{\log}$. We recall that $\G_m^{\log}$ is covered by $\P^1$ with divisorial log structure at 0 and $\infty$ (an fs log scheme, which we shall denote by $\P^1_{\log}$). Explicitly the map is given as follows: let $\P^1 = \Spec\, k[x] \sqcup_{\Spec\, k[x, x^{-1}]} \Spec\, k[x^{-1}]$, with $\A^+$ and $\A^-$ the two open sets of the cover, with $x$ and $x^{-1}$ generating the log structures at 0 and infinity. (We are being slightly abusive, writing $x$ as a local section of both $\mathcal{O}_{\P^1}$ and $\mathcal{M}_{\P^1_{\log}}$.) Given a $T$-point $f: T \to \P^1_{\log}$, then we have two open sets $f^{-1}(\A^+)$ and $f^{-1}(\A^{-})$ in $T$, at least one of which is nonempty. Thus we have $f^{-1}(x)$ and $f^{-1}(x^{-1})$; at least one of these is nonzero. We define $u \in M_T^{\gp}$ as $f^{-1}(x)$ if $\A^+$ nontrivially intersects $\im(f)$ or $f(x^{-1})^{-1}$ if $\A^{-}$ nontrivially intersects $\im(f)$. (These manifestly agree if $\im(f)$ intersects \textit{both} $\A^+$ and $\A^-$.) The map $(f: T \to \P^1_{\log}) \mapsto (u \in M_T^{\gp})$ defines a morphism 

\[
\P_1^{\log} \to \G_m^{\log}.
\]

This map is in fact a log \'etale cover; though it is \textit{not} Kummer; rather, it is a so-called ``log modification" (in other words a log blowup -- see \cite{MolchoWise2018TheLP}, Lemma 2.2.7.3.). Log blowups are in fact acyclic (\cite{IllusieLogEt}, Theorem 6.2); $\G_m^{\log}$ has the same homotopy type as $\P^1_{\log}$, which, over an algebraically closed field of characteristic 0, has $\pi_1^{\log}$ given by $\widehat{\Z}$; for algebraically closed fields of characteristic $p$, the $\pi_1^{\log}$ is given by $\prod_{\ell \ne p} \Z_\ell$. In short, the $\pi_1^{\log}$ of $\G_m^{\log}$ looks very much like the $\pi_1^{et}$ of $\G_m$ (and so, in turn, like the homotopy type of the circle $S^1$). 

Indeed, it is readily seen that the log toric variety $\P_1^{\log}$ has a Kato-Nakayama space \cite{katonakayama1999log} given by a cylinder $[0,1] \times S^1$; the corresponding projection map $\P^1_{\log} \to \G_m^{\log}$ projects onto $S^1$ (thereby remembering only the logarithmic, or boundary data of $\P^1_{\log}$). In other words, we may visualize $\G_m^{\log}$ as an $S^1$, which has the same homotopy type as $\G_m$ (at least in characteristic 0). 

We close by mentioning a pleasant fact which will become useful later. Let $\F_q^{\log}$ be the standard log point over $\F_q$; that is, the fs log scheme associated to the constant monoid $\N$ on $\Spec(\F_q)$, with $\alpha 1 \mapsto 0$. We obviously have the inclusion of the origin: $\F_q^{\log} \to \P_1^{\log}$ (an inclusion which does \textit{not} factor through an $\F_q$ point).

\begin{prop}\label{tamegroupisos}
    The maps $\F_q^{\log} \to \P^1_{\F_q, \log} \to \G_{m, \F_q}^{\log}$ induce canonical isomorphisms of $\pi_1^{\log}$. The resulting fundamental group is isomorphic to $\widehat{\Z(1)}' \rtimes_q \widehat{\Z}$, where $\widehat{\Z(1)}' = \underset{(n, q)=1}{\underset{\longleftarrow}{\lim}} \mu_n$, and $\rtimes_q$ means that if $F$ is the topological generator of $\widehat{\Z}$ and $x \in \widehat{\Z(1)}'$, then $FxF^{-1} = x^q$. In particular, $\pi_1^{\ab} \simeq \mu_{q-1} \times \widehat{\Z}\simeq \F_q^{\times} \times \widehat{\Z}$.
\end{prop}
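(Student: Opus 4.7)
The plan is to identify all three fundamental groups with the abstract group $\widehat{\Z(1)}' \rtimes_q \widehat{\Z}$ via compatible canonical isomorphisms, working outward from the middle. By Corollary \ref{FundIso} applied to the normal crossings divisor $\{0,\infty\}\subset\P^1_{\F_q}$, the group $\pi_1^{\log}(\P^1_{\F_q,\log})$ is canonically the Grothendieck--Murre tame fundamental group $\pi_1^t(\G_{m,\F_q})$. A standard Kummer-theory argument shows that after base change to $\overline{\F_q}$ this is generated by the covers $t\mapsto t^n$ with $(n,p)=1$, giving the geometric tame $\pi_1$ as $\widehat{\Z(1)}' = \varprojlim_{(n,p)=1}\mu_n$; the full arithmetic group fits into a split short exact sequence
\[
1\to\widehat{\Z(1)}'\to\pi_1^{t}(\G_{m,\F_q})\to\Gal(\overline{\F_q}/\F_q)\to 1,
\]
with arithmetic Frobenius acting on $\mu_n$ by $\zeta\mapsto\zeta^q$, yielding the stated semidirect product.

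Next I would handle $\P^1_{\F_q,\log}\to\G_m^{\log}$ via Illusie's theorem that log blowups are acyclic (\cite{IllusieLogEt}, Theorem 6.2): the morphism $\P^1_{\log}\to\G_m^{\log}$ constructed in the preceding discussion is a log modification, so pullback is an equivalence of Kummer \'etale cover categories and hence induces an isomorphism $\pi_1^{\log}(\P^1_{\F_q,\log})\xrightarrow{\sim}\pi_1^{\log}(\G_m^{\log})$.

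For the inclusion $\F_q^{\log}\hookrightarrow\P^1_{\F_q,\log}$ at the origin I would compute $\pi_1^{\log}(\F_q^{\log})$ directly by Kummer theory of the standard log point. A connected Kummer cover corresponds to a monoid enlargement $\N\hookrightarrow\tfrac{1}{n}\N$ with $(n,p)=1$ together with the minimal residue-field extension $\F_{q^d}/\F_q$ needed to contain $\mu_n$ (where $d$ is the multiplicative order of $q$ modulo $n$), and the automorphism group of such a cover is $\mu_n\rtimes\Gal(\F_{q^d}/\F_q)$; passing to the inverse limit yields $\pi_1^{\log}(\F_q^{\log})\cong\widehat{\Z(1)}'\rtimes_q\widehat{\Z}$ with the same semidirect structure. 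To see that the map is an isomorphism and not merely a surjection, I would check that the Kummer cover $t\mapsto t^n$ of $\P^1_{\log}$ pulls back at the origin to exactly the cover $\N\hookrightarrow\tfrac{1}{n}\N$ of $\F_q^{\log}$, matching Galois actions on both sides. The abelianization claim then follows from the commutator identity $[F,x]=x^{q-1}$ in the semidirect product, which yields
\[
\pi_1^{\log,\ab}\simeq\widehat{\Z(1)}'/(q-1)\widehat{\Z(1)}'\times\widehat{\Z}\simeq\mu_{q-1}\times\widehat{\Z}\simeq\F_q^{\times}\times\widehat{\Z},
\]
using that $q-1$ is prime to $p$.

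The hard part will be verifying that restriction of Kummer \'etale covers from $\P^1_{\F_q,\log}$ to the origin $\F_q^{\log}$ is an equivalence of Galois categories, rather than merely essentially surjective or fully faithful in isolation. This is a purity-type statement asserting that tame covers of $(\P^1_{\F_q},\{0,\infty\})$ are detected by their germ at a single ramification point; it can be made rigorous either by combining Theorem \ref{CovThm} with the explicit Kummer classification of tame covers of $\G_m$, or by comparing inertia and decomposition groups and noting that in this genus-zero case the inertia subgroup at $0$ already accounts for all of $\pi_1^t(\G_{m,\F_q})$.
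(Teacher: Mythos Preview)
The paper does not supply a proof of this proposition; it is stated as a ``pleasant fact,'' with the surrounding discussion in Section~\ref{htpylogstack} already establishing the $\P^1_{\log}\to\G_m^{\log}$ step via Illusie's acyclicity of log blowups (exactly your step 3), and with the later proof of Proposition~\ref{LocLogCFT} computing $\pi_1^{\log}(\F_q^{\log})$ by exhibiting the universal Kummer cover $\overline{\F_q}^{Kum}$ (monoid $\N_{(p)}$ over $\Spec\overline{\F_q}$) and reading off its automorphism group. Your proposal is therefore more complete than anything the paper itself provides, and where the paper does give arguments you follow the same line: Corollary~\ref{FundIso} for $\P^1_{\log}$, Illusie for the log modification, and direct Kummer-cover bookkeeping for the log point.

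One small wrinkle in your description of connected Kummer covers of $\F_q^{\log}$: a cover with monoid $\tfrac{1}{n}\N$ and residue field $\F_{q^e}$ is connected for \emph{any} $e$, not only the minimal $d=\mathrm{ord}_n(q)$; what singles out $d\mid e$ is that the cover be Galois (equivalently, that $\mu_n\subset\F_{q^e}$). This does not affect the outcome, since the Galois covers with $e=d$ are cofinal and their automorphism groups assemble to $\widehat{\Z(1)}'\rtimes_q\widehat{\Z}$ as you claim. Your ``hard part'' --- that restriction to the origin is an equivalence of Galois categories --- is genuinely the crux, and your proposed verification (matching $t\mapsto t^n$ with $\N\hookrightarrow\tfrac{1}{n}\N$, plus the genus-zero observation that tame inertia at $0$ already exhausts $\pi_1^t(\G_{m,\overline{\F_q}})$) is the right way to close it; you could also phrase it as the statement that every connected Kummer cover of $\F_q^{\log}$ arises by restricting a (unique) connected tame cover of $\G_m$, which is immediate once both sides are listed.
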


\section{Compactified Vector Spaces}\label{compvecsp}

\subsection{Compactified Vector Spaces over \texorpdfstring{$k$}{k}} The centerpiece of Deligne's argument for geometric class field theory is the simply connectedness of $\P^N$. 
For us, the role of $\P^N$ will be played by logarithmically-compactified vector spaces; that is, $\P^N$ with log structure at a hyperplane divisor. We shall presently discuss the basic properties of such log schemes.

If $V \subset P(\widetilde{V})$ is an affine open embedding of a vector space into a projective space, with $P(\widetilde{V})\setminus V = H$, we say that the fs log scheme $P(\widetilde{V})$ with divisorial log structure at $H$ is a logarithmic compactification of $V$, and we will call such a log scheme a \textit{compactified vector space}.

One functorial formulation of a log-compactified vector space, which is helpful for computations, and which appears (at least for the case of line bundles) in \cite{FukKatShar} is:

\begin{prop}
    The functor of points of a logarithmically-compactified vector space $\overline{V}^{\log}$ is given by

\[
V \cup_{V^{\times}} \left(M^{-1} \times^{\mathcal{O}^{\times}} V^{\times}\right); 
\]

\noindent that is,

\[
(\Spec \,R, R, M, \alpha) \mapsto V(R) \cup_{V^{\times}(R)} (M^{-1} \times^{R^{\times}} V^{\times}(R)),
\]

\noindent where: 

i) $V(R) =  \{(r_1, \ldots r_n) \in R^n\}$; $V^{\times}(R) =  \{(r_1, \ldots r_n) \in R^n \,: \, r_1, \ldots, r_n$ generate the unit ideal in R$\}$;

ii) $M^{-1} = \{m^{-1}\in M^{gp} \,:\, m \in M\}$;

iii) $(M^{-1} \times^{R^{\times}} V^{\times}(R)) = \left(M^{-1} \times V^{\times}(R) \right)/ \sim$, where

iv)  $\sim$ is the equivalence relation $(tu, v) \sim (t, uv)$, for $t \in M^{-1}$, $u \in R^{\times}$, $v \in V^{\times}(R)$; and 

v) where $\cup_{V^{\times}}$ means the union, identifying $V^{\times}(R) \subset V(R)$ with $V^{\times}(R) \subset M^{-1} \times^{R^{\times}} V^{\times}(R)$.
\end{prop}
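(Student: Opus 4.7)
My plan is to verify the functor of points description by covering $\overline{V}^{\log}$ by two opens, computing the functor on each, and checking the gluing. I would use the cover by $V$ (the affine chart, with trivial log structure) and $W := \P(\widetilde V) \setminus \{0\}$ (the complement of the origin of $V$), which contains $H$ and carries the divisorial log structure at $H$; their intersection $V^{\times} = V \setminus \{0\}$ has trivial log structure. The $T$-points of $V$ and $V^{\times}$, taken as ordinary schemes, are manifestly $V(R) = R^n$ and $V^{\times}(R) = \{(v_1, \ldots, v_n) \in R^n : (v_i) \text{ generate the unit ideal}\}$, matching those pieces of the formula.

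The key computation is the functor of points of $W(\log H)$. Here I would use that $W$ is canonically the total space of the line bundle $\mathcal{O}_{\P^{n-1}}(1)$ over $\P^{n-1}$, with $H$ identified as the zero section. A scheme-theoretic $T$-point of $W$ then corresponds to a line bundle $N$ on $T$ with $n$ generating sections $v_1, \ldots, v_n$ (giving a $T$-point of $\P^{n-1}$) together with an additional section $s_0 \in \Gamma(T,N)$, all taken up to isomorphism. The divisorial log structure requires a log-lift of $s_0$: in a local trivialization $e$ of $N$, writing $s_0 = g_0\, e$ and $v_j = g_j\, e$ with $(g_1, \ldots, g_n)$ generating the unit ideal, the log data is an element $\tau \in M$ with $\alpha(\tau) = g_0$. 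Tracking the $\mathcal{O}^{\times}$-action arising from change of trivialization $e \mapsto u e$ rescales each of $g_0, g_j, \tau$ by $u^{-1}$, producing the equivalence $(\tau, (g_j)) \sim (\lambda \tau, \lambda (g_j))$ on pairs $(\tau, v) \in M \times V^{\times}(R)$.

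To recover the $M^{-1}$ form of the proposition, I would apply the set-level bijection $M \leftrightarrow M^{-1}$, $m \mapsto m^{-1}$: under this identification the equivalence above translates exactly into the proposition's $(tu, v) \sim (t, uv)$ on $M^{-1} \times V^{\times}(R)$. The gluing identification is then a direct check: the class $[(1, v)] \in M^{-1} \times^{R^{\times}} V^{\times}(R)$ attached to $v \in V^{\times}(R)$ corresponds to the trivial log lift $\tau = 1$ of an $s_0$ that is a unit in the local trivialization, hence in projective coordinates to the $V$-point $(v_1, \ldots, v_n) \in V(R)$, matching the inclusion $V^{\times}(R) \hookrightarrow V(R)$. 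The main technical obstacle is the careful trivialization bookkeeping in the middle step: one must verify that pulling back the divisorial log structure through the nontrivial line bundle $N$ really does produce the advertised $\mathcal{O}^{\times}$-equivariance on the pair $(\tau, v)$. Once this is in place, the remaining identifications are formal.
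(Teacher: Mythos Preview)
The paper does not supply a proof of this proposition: it is stated, with a reference to \cite{FukKatShar} for the line-bundle case, and the text moves immediately to commentary and then to Proposition~\ref{CompactVectPt}. So there is nothing on the paper's side to compare against.

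Your argument is sound. The two-chart cover by $V$ and $W=\P(\widetilde V)\setminus\{0\}$ is the natural one, and the identification of $W$ with the total space of $\mathcal{O}_{\P^{n-1}}(1)$ (with $H$ as the zero section) is correct and gives a clean way to read off the log-lift data. The change-of-trivialization bookkeeping you flag is indeed the only place requiring care, and it goes through as you say: rescaling the local generator $e$ by $u$ rescales $(g_0,g_1,\dots,g_n,\tau)$ uniformly, producing the relation $(\tau,v)\sim(\lambda\tau,\lambda v)$ on $M\times V^{\times}(R)$, which under $\tau\mapsto t=\tau^{-1}$ becomes the proposition's $(tu,v)\sim(t,uv)$ on $M^{-1}\times V^{\times}(R)$. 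One caveat worth making explicit: the displayed formula is literally correct only for \emph{local} $R$, so that the auxiliary line bundle $N$ is trivial and every map $\Spec R\to\P(\widetilde V)$ factors through one of the two opens; for general affine $R$ the right-hand side is the presheaf whose Zariski sheafification is the functor of points. The paper's statement glosses over this as well, so your argument matches its intended level of precision.
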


Notice that the set-theoretic output of a log-compactified vector space is not readily identifiable as a group (or a monoid, or any other familiar structure). Rather it is a strange amalgam of additive and multiplicative structures; we can say that its ``body" $V$ is additive, while its ``head" $\left(M^{-1} \times^{\mathcal{O}^{\times}} V^{\times}\right)$ is multiplicative. Professor Kato, for this reason, likens a log-compactified vector space to the centaur of Greek mythology, which has the head of a man and the body of a horse \cite{FukKatShar}. 

Another formulation, which will naturally occur in our study of higher-degree analogues of the Abel-Jacobi map, is as follows.
 
\begin{prop}\label{CompactVectPt}
    Let $V$ be a vector space, and $\varphi: V \to k$ a nonzero linear functional. Consider the presheaf $F$ such that, for a log scheme $T$ over a local ring, the set $F(T)$ is given by: 
\[
F(T):=\{[V\otimes \mathcal{O}_T]^\times \times_{\mathcal{O}_T} \mathcal{M}_T\}/ \mathcal{O}_T^{\times},
\]

\noindent defined in detail as follows. The set $[V\otimes \mathcal{O}_T]^\times$ means elements of $V \otimes\mathcal{O}_T$ that are nonzero under specializing to any point $t \in T$ (equivalently, if we write $e_i$ for a $k$-basis of $V$, then $\sum_i e_i \otimes f_i \in V \otimes \mathcal{O}_T$ lies in $V \otimes \mathcal{O}_T^{\times}$ if and only if the $f_i$ generate the unit ideal in $\Gamma(T, \mathcal{O}_T)$). The $\times_{\mathcal{O}_T}$ means fiber-product, taken with respect to the maps $\varphi \otimes 1$, and $\alpha$. Finally, the quotient is taken with respect to the diagonal action of $\mathcal{O}_T^{\times}$. 

Then the sheafification of the functor $F$ represents the compactified vector space $\P(V)$ with log structure at $\P(\ker(\varphi))$.
\end{prop}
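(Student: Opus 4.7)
The plan is to establish a natural bijection $F(T) \cong \Hom_{\log}(T, \overline{V}^{\log})$ whenever the underlying scheme of $T$ is the spectrum of a local ring, where $\overline{V}^{\log}$ denotes $\P(V)$ equipped with the divisorial log structure $\mathcal{M}_{\log H}$ at the hyperplane $H := \P(\ker \varphi)$; representability of the sheafification then follows from the fact that $\Hom_{\log}(-, \overline{V}^{\log})$ is already a sheaf on $\mathbf{Log\,\, Schemes}$. Fix a basis $e_1,\dots,e_n$ of $V$ with dual basis $\psi_1,\dots,\psi_n$, and let $U_j := \{\psi_j \ne 0\} \subset \P(V)$ with affine coordinates $y_i^{(j)} := \psi_i/\psi_j$. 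The divisor $H$ restricted to $U_j$ is cut out by the single regular function $\varphi/\psi_j$, so $\mathcal{M}_{\log H}|_{U_j}$ is the log structure freely generated over $\mathcal{O}_{U_j}^\times$ by $\varphi/\psi_j$.

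For the forward map $\Phi: F(T) \to \Hom_{\log}(T, \overline{V}^{\log})$, take a representative $(v, m)$ with $v \in [V\otimes\mathcal{O}_T]^\times$ and $\alpha_T(m) = \varphi(v)$. Since $T$ is local, some coordinate $v_j := \psi_j(v)$ is a unit, and the underlying scheme morphism $[v]\colon T \to \P(V)$ factors through $U_j$. The log structure map is prescribed by sending the generator $\varphi/\psi_j$ to $m v_j^{-1} \in \mathcal{M}_T$; this is compatible with the structure maps because $\alpha_T(m v_j^{-1}) = \varphi(v)/v_j$ equals the pullback of $\varphi/\psi_j$ along $[v]$. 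Two checks ensure well-definedness: (i) if both $v_j$ and $v_k$ are units, the overlap relation $\varphi/\psi_j = (\psi_k/\psi_j)(\varphi/\psi_k)$ on $U_j \cap U_k$ pulls back to the identity $m v_j^{-1} = (v_k/v_j)(m v_k^{-1})$; (ii) the diagonal rescaling $(v, m) \mapsto (uv, um)$ for $u \in \mathcal{O}_T^\times$ leaves $m v_j^{-1}$ unchanged.

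For the inverse, a log morphism $(f, f^\flat)\colon T \to \overline{V}^{\log}$ with $T$ local has $f$ factoring through some $U_j$; the affine coordinates pulled back to $T$ give elements $y_i \in \mathcal{O}_T$, and one sets $v := e_j + \sum_{i \ne j} y_i e_i \in V\otimes\mathcal{O}_T$, so $v_j = 1 \in \mathcal{O}_T^\times$ and in particular $v \in [V \otimes \mathcal{O}_T]^\times$. Putting $m := f^\flat(\varphi/\psi_j) \in \mathcal{M}_T$ yields $\alpha_T(m) = \varphi(v)$, so $(v, m) \in F(T)$. Different choices of chart $U_j$ produce representatives related by a unit rescaling, so the class in $F(T)$ is canonical, and the two constructions are visibly mutually inverse.

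Finally, since every fs log scheme $T$ admits a Zariski cover by open log subschemes whose underlying schemes are $\Spec$ of local rings, the above pointwise bijection assembles into an isomorphism of sheaves $F^{\mathrm{sh}} \cong \Hom_{\log}(-, \overline{V}^{\log})$, establishing the claimed representability. The main obstacle I anticipate is the bookkeeping on overlaps: one must verify that the construction $m v_j^{-1}$ depends only on $(v, m)$ modulo the \emph{diagonal} $\mathcal{O}_T^\times$-action, and that the transition data between the additive datum $v \in V \otimes \mathcal{O}_T$ and the multiplicative local equations $\varphi/\psi_j$ glues consistently across the charts $U_j \cap U_k$. Once this compatibility is in hand, the rest of the argument is essentially formal.
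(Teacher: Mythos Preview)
Your argument is correct and is essentially the same as the paper's, just phrased in explicit affine-chart language where the paper uses the more conceptual observation that $(V\setminus\{0\})_{\log} \to \P(V)(\log H)$ is a $\G_m$-torsor (trivial over local rings); your choice of a chart $U_j$ with $v_j \in \mathcal{O}_T^\times$ is exactly a trivialization of that torsor. One small slip: in your last paragraph, an arbitrary fs log scheme is \emph{not} Zariski-covered by spectra of local rings; what you want to say is that $\Hom_{\log}(-,\overline{V}^{\log})$ is already a sheaf and agrees with $F$ on local test objects, and since stalks detect isomorphisms of Zariski sheaves the sheafification of $F$ must coincide with it.
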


\begin{proof}
    A global section of $[(a, m)] \in \{[V\otimes \mathcal{O}_T]^\times \times_{\mathcal{O}_T} \mathcal{M}_T\}/ \mathcal{O}_T^{\times}$ (for $a \in \Gamma ([V\otimes \mathcal{O}_T]^\times)$ and $m \in \Gamma(\mathcal{M}_T)$), specifies the data of a map $a: T \to V\setminus\{0\}$, which is only defined up to scaling. This gives us a map $T \to \P(V)$. The $m$ gives a section of $\mathcal{M}_T$, well-defined over each standard affine-open, that is trivial (i.e., in $\mathcal{O}_T^{\times}$) over the locus $a^{-1}\left(\P(V) \setminus\P(\ker \varphi)\right)$. We note that $\left(V\setminus\{0\}\right)_{\log} \to \P(V)(\log \P(\ker(\varphi)))$ (the former with log structure strictly pulled back from that of $\P(V)(\log \P(\ker\varphi))$) is a $\G_m$ torsor, and $\G_m$ torsors are trivial over local rings. Thus, for local $T$ we may lift $T \to \P(V)(\log \P(\ker\varphi))$ to $T \to (V\setminus\{0\})(\log(\ker(\varphi)\setminus\{0\}))$, giving us an element in $\{[V\otimes \mathcal{O}_T]^\times \times_{\mathcal{O}_T} \mathcal{M}_T\}$, which descends to a well-defined element of $F(T)$.
\end{proof}

We shall see that this is the particular functorial formulation of compactified vector spaces that naturally occur in our story.

\subsection{Families of Compactified Vector Spaces} It will be important for us to adopt Grothendieck's relative points of view; that is to say, we ought to upgrade the notion of a log-compactified vector space from a  property of an individual log scheme to a property of a \textit{morphism} between two log schemes -- or even to a morphism between two functors $\textbf{Log Scheme}^{\op} \to \textbf{Sets}$. In other words, we wish to define the notion of a ``relative"  compactified log vector-bundle. 

Let $T$ be an fs log scheme. Then a compactified vector bundle over $T$ (or a $T$-family of log-compactified vector spaces) is given by a morphism between fs log schemes

\[
\pi: \mathcal{E} \to S
\]

\noindent which has the following properties: 

i) The underlying scheme $\mathcal{E}$ is a $\P^N$-bundle over $S$ -- that is to say: the map $\mathcal{E} \to S$ is flat with all geometric fibers isomorphic to $\P^N$;

ii) there is a horizontal divisor $H \subset \mathcal{E}$ such that, for all geometric points $s \in S$, the fiber $H|_{s}$ is given by a (linear) hyperplane $\P^{N-1}|_s \subset \P^N|_s$;

iii) the log structure of $\mathcal{E}$ is isomorphic to the pushout 

\[\pi^*(\mathcal{M}_S) \times^{\mathcal{O}_\mathcal{E}^{\times}} \mathcal{M}_H,
\] 

\noindent where $\pi^*(\mathcal{M}_S)$ is the strict pullback of the log structure from $S$ to $\mathcal{E}$ and $\mathcal{M}_H$ is the sheaf of monoids associated to the divisorial log structure of $H$.

Moreover, if we let $\phi: F\to G$ be a natural transformation between two functors  

\[
F,G: \textbf{Log Schemes}^{\op} \to \textbf{Sets},
\]

\noindent then we say that $\phi$ is a \textit{relative compactified vector space} or once more a \textit{compactified vector bundle} if, for all affine fs log schemes $T$, and morphisms $t : T \to G$ (viewing $T$ as a functor $\textbf{Log Schemes}^{\op} \to \textbf{Sets}$ and $t$ as a natural transformation between $\phi$ and $G$), the fiber product functor:

\[
\{t\} \times_G {\phi}: \textbf{Log Schemes}^{\op} \to \textbf{Sets}
\]

\noindent is representable by a compactified vector bundle over $T$.

\subsection{The Simply Connectedness of Log-Compactified Vector Spaces}  In this section we will prove:

\begin{prop}\label{simpleconnfiber}
    Let $S$ be a logarithmic stack and $p: \mathcal{E} \to S$ be a compactified vector bundle. Then $\pi_1^{\log}(\mathcal{E}) \to \pi_1^{\log}(S)$ is an isomorphism. 
\end{prop}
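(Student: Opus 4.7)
The plan is to apply the logarithmic homotopy exact sequence \eqref{htpyexactstack} to $p:\mathcal{E}\to S$, reducing the claim to the log simple connectedness of a single compactified vector space. I first verify the hypotheses. The map $p$ is representable by the definition of a relative compactified vector bundle; its underlying morphism is a flat, proper $\P^N$-bundle, and in particular $p_*\mathcal{O}_{\mathcal{E}}=\mathcal{O}_S$. The log structure on $\mathcal{E}$ is the pushout $\pi^*\mathcal{M}_S\times^{\mathcal{O}_\mathcal{E}^\times}\mathcal{M}_H$ along the smooth horizontal hyperplane divisor $H$; the \'etale-local chart $Q\to Q\oplus\N$ lifting any chart of $S$ exhibits $p$ as log smooth. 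These properties are preserved by base change to any $T$-point of $S$.

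Specializing \eqref{htpyexactstack} at a log geometric point $\bar x\in\mathcal{E}$ over $\bar s\in S$ yields
\[
\lim_{\substack{\longleftarrow\\ U\ni\bar s}}\pi_1^{\log}(\mathcal{E}\times_S U,\bar x)\longrightarrow\pi_1^{\log}(\mathcal{E},\bar x)\longrightarrow\pi_1^{\log}(S,\bar s)\longrightarrow 1.
\]
Surjectivity of the second arrow is automatic, so the task is to kill the limit. Since compactified vector bundles are stable under base change, the pro-fiber $\mathcal{E}\times_S\bar s$ is a compactified vector space whose underlying scheme is $\P^N$ with divisorial log structure at a hyperplane $H$; the limit computes the log fundamental group of this pro-fiber. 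A log geometric point is itself log simply connected (all Kummer roots already exist in its monoid), so up to the trivial contribution of $\bar s$ one is reduced to showing $\pi_1^{\log}(\P^N(\log H))=0$ over an algebraically closed field.

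For this I invoke Corollary \ref{FundIso}: since $\P^N$ is regular with $H$ a normal crossings divisor, $\pi_1^{\log}(\P^N(\log H))\cong\pi_1^t(\A^N)$. In dimension one a brief Riemann--Hurwitz calculation closes the deal: a degree-$n$ tame cover of $\P^1$ ramified only at $\infty$ with ramification profile $(e_1,\ldots,e_r)$ satisfies $2g-2=-n-r$, forcing $n=r=1$. The vanishing in higher dimensions then follows by restricting any purported cover to a generic affine line $\A^1\hookrightarrow\A^N$ (or directly from Grothendieck--Murre). Combined with the homotopy sequence this yields $\pi_1^{\log}(\mathcal{E})\xrightarrow{\sim}\pi_1^{\log}(S)$.

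The principal obstacle I anticipate is justifying that the limit $\lim_{U\ni\bar s}\pi_1^{\log}(\mathcal{E}\times_S U)$ really computes $\pi_1^{\log}$ of the pro-fiber $\mathcal{E}\times_S\bar s$: this amounts to showing that a Kummer \'etale cover of $\mathcal{E}$ trivialized over some Kummer neighborhood of $\bar s$ is already pulled back from a cover of $S$, a consequence of proper base change for Kummer \'etale cohomology together with the connectedness of the fibers. A subsidiary point is the extension of the sequence from fs log schemes to logarithmic stacks for the base $S$, which, as indicated in Section \ref{htpylogstack}, is formal.
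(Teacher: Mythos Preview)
Your proposal is correct and follows essentially the same route as the paper: apply the logarithmic homotopy exact sequence (Theorem~\ref{loghtpyseq} and its extension in Section~\ref{htpylogstack}) to reduce to the log simple connectedness of $\P^N(\log H)$, then invoke Corollary~\ref{FundIso} to identify this with $\pi_1^t(\A^N)=1$. The paper cites the vanishing of $\pi_1^t(\A^N)$ as a classical fact, while you supply the Riemann--Hurwitz calculation for $N=1$ and the Bertini-type reduction for larger $N$; you are also more explicit about verifying the hypotheses of the exact sequence, but the architecture of the argument is the same.
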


\begin{proof}

 We observe that $p: \mathcal{E} \to S$ is a log-proper, log-regular fibration. By the homotopy exact sequence of $\pi_1^{\log}$'s for a fibration (see Theorem \ref{loghtpyseq}, Section \ref{htpylogstack}, and \cite{Hoshi2009LogHomotopy}) the theorem boils down to the claim that $\P^N$ with divisorial log structure on a hyperplane $\P^{n-1} \subset \P^N$ is (log)-simply connected. By \ref{FundIso}, this is equivalent to showing that there are no tamely ramified covers of $\P^N$ with branching locus lying over a hyperplane, which is a classical result (see, e.g., \cite{toth2011geometric}, example 3.3.8). 
\end{proof}

\begin{remark}\label{contractibility}
    It is interesting to note that this even holds in characteristic $p$; where the non-compactified $\A^1_p$ is famously \textit{not} simply connected. In fact, the map $\mathcal{E} \to S$ should be a log-homotopy equivalence; that is to say, $\P^N$ with log structure on a hyperplane is \textit{contractible}. Thus the natural map $\mathcal{F} \to Rp_*p^*(\mathcal{F})$ is an equivalence for any $\mathcal{F}\in D^+(S, \Lambda)$, with $D^+$ the bounded-below derived category of constructible sheaves with respect to the Kummer \'etale  topology and $\Lambda = \Z/d\Z$ (including $d$ not invertible on $S$). This is interesting geometrically, but will not be required in what follows. 
\end{remark}

\section{The (Rigidified) Logarithmic Picard and Jacobian}\label{LogPicRig}

\subsection{The Classical Theory}\label{ClassicalRigPic} (cf. \cite{toth2011geometric}). Let $X$ denote a proper, smooth, geometrically connected curve over a field $k$. Let $S$ denote a finite closed set of points in $X$. For simplicity, let us assume that all are $k$-rational.

The \textit{classical tamely ramified Picard scheme}, denoted $\Pic_{X,S}$, represents the functor 

\[
T \mapsto \{(\mathcal{L}, \theta)\}/\cong,
\]

\noindent where $\mathcal{L}$ is a line bundle on $X \times T$ and $\theta = (\theta_s)_{s \in S}$ is a collection of isomorphisms

\[
\theta_s: \mathcal{O}_T \xrightarrow{\sim} i_s^*\mathcal{L},
\]

\noindent with $i_s :  T \to X \times T$ the map $(s, \Id)$, for $s$ the constant map $T \to \{s\} \in X$. Here $\cong$ means isomorphisms $\mathcal{L} \xrightarrow{\sim} \mathcal{L}'$ intertwining the corresponding isomorphisms $\theta_s$. We may also think of this as the data of a global section of $i_s^*\mathcal{L}$ over $T$. 

Another construction for $\Pic_{X,S}$ is given by the functor:

\begin{equation}\label{H1Def}
 T \mapsto H^1(X \times T, \,\Ker (\mathcal{O}_{X \times T}^{\times} \to \oplus_s i_{s_*} \mathcal{O}_T^{\times})).
\end{equation}

There is an obvious forgetful map $\Pic_{X,S} \to \Pic_X$. The fibers are given by the collection of isomorphisms $\theta_s$. These isomorphisms are acted on by $\mathbb{G}_m^{|S|}$ -- though this action is not free; indeed, note that if $(\mathcal{L}, \mathcal{\theta})$ is a $T$-point of $\Pic_{X,S}$, then for all $u \in \mathbb{G}_m(T) = \mathcal{O}_T^{\times}$, we may define $u\theta :=(u\theta_s)_{s \in S}$,

\[
u\theta_s': \mathcal{O}_T \xrightarrow{\mu_{u^{-1}}} \mathcal{O}_T \overset{{\theta_s} }{\longrightarrow} i_s^*\mathcal{L}
\]

\noindent where $\mu_{u^{-1}}$ denotes multiplication by $u^{-1} \in \mathcal{O}_T^{\times}$. But clearly $(\mathcal{L}, \theta) \cong (\mathcal{L}, u\theta)$. Thus the ``diagonal" action of $\mathbb{G}_m$ on the $\theta_s$ is trivial, and we see that the map $\Pic_{X,S} \to \Pic_X$ is a $\mathbb{G}_m^{|S|-1}$ torsor.

In other words, once we pick an identity $\theta_{0} = (\theta_0)_S$, we may identify the fibers of  $\Pic_{X,S} \to \Pic_X$ with a torus  $\mathcal{T} = \left(\oplus_{s \in S} \mathbb{G}_m\right)/\mathbb{G}_m$, where $\mathbb{G}_m$ is embedded diagonally. (If we had not assumed each $s \in S$ to be rational, then we would use $\left(\Res_{S/k}(\mathbb{G}_m)\right)/\mathbb{G}_m$ instead.) If $\mathcal{L}_0 \in \Pic_{X,S}(k)$ is the trivial line bundle (i.e., $\mathcal{O}_X$), then $\Pic_{X,S}$ is canonically identified with $\mathcal{T}$ and we have an exact sequence:

\begin{equation}\label{TorSeq}
\begin{tikzcd}
1 \arrow[r] &\mathcal{T} \arrow[r] & {\textrm{Pic}_{X,S}} \arrow[r] & \textrm{Pic}_X \arrow[r] & 1.
\end{tikzcd}
\end{equation}

\noindent Notice that the classical $\Pic_{X,S}$ is the extension of $\Z$ by a semi-abelian variety; $ \Pic_{X,S}^0$ is the extension of the abelian variety $J_{X}$ by $\G_m^{|S|-1}$. 

Recall that a semi-abelian variety $G$ (living as an extension $1 \to T \to G \to A \to 1$) is automatically a commutative group scheme: the conjugation action $\textrm{Inn}: G \to \Aut(T) = \GL_{|S|-1}(\Z)$, has constant image since $G$ is connected, whence $G$ acts trivially by conjugation on $T$. Hence $T$ is central in $G$. On the other hand, the commutator map $c: G \times G \to G; (g, h) \mapsto ghg^{-1}h^{-1}$, maps to the identity in $A$ under projection since $A$ is commutative. Thus the image of the commutator map must lie in $T = \ker(G \to A)$. Then $c$ factors through $A$, since $T$ is central, giving a map $A \times A \to T$. But $A\times A$ is proper while $T$ is affine, so the image is constant: it must be the identity. 

From this we conclude that $\Pic_{X,S}$ is a commutative group scheme.

\subsection{The (Framed) Logarithmic Jacobian and Picard} $\Pic_{X,S}$ is not proper -- indeed, the torus $\mathcal{T}$ in (\ref{TorSeq}) 
is not proper. This is highly undesirable for a moduli space. Luckily, log geometry comes to our rescue: we may compactify $\mathcal{T}$ using $\mathbb{G}_{m }^{\log}$. Observe that, as a functor of points:

\[
\mathcal{T} = \mathcal{H}om(\mathcal{H}om(\mathcal{T}, \G_m), \G_m).
\]

\noindent So we define:

\begin{equation}
\overline{\mathcal{T}} = \mathcal{H}om(\mathcal{H}om(\mathcal{T}, \G_m), \G_m^{\log}),
\end{equation}

\noindent and define $\overline{\Pic}_{X,S}$ via the pushout diagram:

\begin{equation}\label{logpushout}
\begin{tikzcd}
1 \arrow[r] & \mathcal{T} \arrow[d, hook] \arrow[r] & {\textrm{Pic}_{X,S}} \arrow[d, hook] \arrow[r] & \textrm{Pic}_{X} \arrow[d] \arrow[r] & 1 \\
1 \arrow[r] & \overline{\mathcal{T}} \arrow[r]      & {\overline{\textrm{Pic}}_{X,S}} \arrow[r]      & \textrm{Pic}_{X} \arrow[r]           & 1
\end{tikzcd}
\end{equation}

\noindent We call this the ``framed" or ``rigidified" logarithmic Picard; sometimes we will simply refer to it as the logarithmic Picard. We can also provide a description of the functor of points of $\overline{\Pic}_{X,S}$ akin to (\ref{H1Def}): 

\begin{equation}
(T, \mathcal{O}_T, \mathcal{M}_T) \mapsto H^1( X \times T, \,\Ker[\mathcal{O}_{X\times T}^{\times}M_T^{\gp} \to \oplus_{s \in S} i_{s*}(M_T^{\gp})]).
\end{equation}

\noindent Here $\mathcal{O}_{X,T}^{\times}M_T^{\gp} = \mathcal{O}_{X\times T}^{\times} \times^{\mathcal{O}_T^{\times}}M_T^{\gp}$; equivalently, $\mathcal{O}_{X\times T}^{\times}M_T^{\gp}  = (p_1^*(M_T))^{\gp}$, where $p_1: T  \times X \to X$ is the projection onto the first factor. Both of these descriptions of $\overline{\Pic}_{X,S}$ demonstrate that it is naturally a commutative group-valued functor on the category of log schemes.

We may also give a more concrete description of $\overline{\Pic}_{X,S}$ as a functor of points:

\begin{equation}
(T, \mathcal{O}_T, \mathcal{M}_T)  \mapsto \{(\mathcal{L}, \theta)\}/\cong
\end{equation}

\noindent where $\mathcal{L}$ denotes an isomorphism class of line bundles on $X \times T$, $\theta = (\theta_s)_{s\in S}$ is a family of sections $\theta_s$ of 

\[
i_s^*(\mathcal{L}^{\times})\times^{\mathcal{O}_T^{\times}} M_T^{\gp},
\]

\noindent and $(\L, \theta) \cong (\L',\theta')$ if there exists a section of $\mathcal{I}som(\L, \L') \times^{\mathcal{O}_T^{\times}}M_T^{\gp}$ which sends $\theta$ to $\theta'$. (Of course, it would be more functorially parsimonious to include the data of such an isomorphism as part of the structure of $\overline{\Pic}_{X,S}$; following this idea leads us to the notion of the logarithmic Picard \textit{stack}, a lax functor $\textbf{Log Schemes}^{\op} \to \textbf{Gpd}$.)

Observe that we still have a natural forgetful map $\overline{\Pic}_{X,S} \to \Pic_X$ (indeed, we have already seen it in the bottom line of (\ref{logpushout})). We write $\overline{\Pic}^n_{X,S}$ or more simply $\overline{J}^{n}_{X,S}$ for the inverse image of $n \in \Z$ under the composite

\[
\begin{tikzcd}
{\overline{\textrm{Pic}}_{X,S}} \arrow[r] & \textrm{Pic}_X \arrow[r, "\textrm{deg}"] & \mathbb{Z}.
\end{tikzcd}
\]

\noindent And we write:

\[
\overline{J}_{X,S}:= \overline{J}_{X,S}^{0},
\]

\noindent which is the logarithmically compactified rigidified Jacobian with respect to $S$.

\begin{remark}
    We note the relationship between the logarithmic Picard scheme ``$\log\Pic$" of Molcho and Wise \cite{MolchoWise2018TheLP} and our $\overline{\Pic}_{X,S}$. Molcho and Wise consider the moduli space of $\G_m^{\log}$-torsors (with bounded monodromy) over an arbitrary log scheme. Our $\overline{\Pic}_{X,S}$ is akin to the Molcho-Wise $\log\Pic$ for the specific case of a smooth curve with divisorial log structure at $S$, with additional rigidification data.
\end{remark}

\subsection{Properness of the Logarithmic Jacobian} We observe here that 

\[
\overline{\Pic}_{X,S} \to \Pic_{X} 
\]

\noindent exhibits $\overline{\Pic}_{X,S}$ as a $(\G_m^{\log})^{|S|-1}$-torsor over $\Pic_{X}$.
Observe that log properness, defined by lifting properties of maps (from log annuli) \textit{into} a log-space, is preserved under fiber products. This means that, given a fibration with both fibers and base log-proper, the total space is also log proper. Applying Propositions \ref{logproperscheme} and \ref{logproperGm}, we obtain:

\begin{prop}\label{propernessJac}
    For each $n$, $\overline{J}_{X,S}^n$ is log proper. In particular $\overline{J}_{X,S} =\overline{J}_{X,S}^0$ is a log-proper abelian group-valued functor on the category of fs log schemes.
\end{prop}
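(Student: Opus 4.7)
The plan is to verify the valuative criterion for log properness by leveraging the torsor structure described just before the proposition: the forgetful map $\overline{J}_{X,S}^n \to \Pic_X^n$ exhibits the source as a $(\G_m^{\log})^{|S|-1}$-torsor over the target. The strategy then has two ingredients. First, $\Pic_X^n$ is a proper scheme (a translate of the Jacobian of $X$), and equipped with trivial log structure it is log proper by Proposition \ref{logproperscheme}. Second, $(\G_m^{\log})^{|S|-1}$ is log proper: finite products preserve log properness (immediate from the valuative criterion, since $\Hom(D, A\times B) = \Hom(D, A)\times \Hom(D, B)$), and each factor is log proper by Proposition \ref{logproperGm}.

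Given these two ingredients, I would verify the valuative criterion for $\overline{J}_{X,S}^n$ directly. Fix a log annulus $D$ with generic point $\eta$ and a map $f_\eta: \eta \to \overline{J}_{X,S}^n$. Composing with the forgetful projection yields $\eta \to \Pic_X^n$, which extends uniquely to $g_D: D \to \Pic_X^n$ by the log properness of $\Pic_X^n$. Pulling back the torsor along $g_D$ produces a $(\G_m^{\log})^{|S|-1}$-torsor $E_D \to D$ equipped with an $\eta$-section coming from $f_\eta$. After trivializing $E_D$ (possibly after passing to a strict \'etale cover of $D$), this section is an element of $(\G_m^{\log})^{|S|-1}(\eta)$, which lifts uniquely to a $D$-point of $(\G_m^{\log})^{|S|-1}$ by its log properness. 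Reassembling yields the desired extension $f_D: D \to \overline{J}_{X,S}^n$; uniqueness is automatic from the uniqueness at each stage together with strict \'etale descent. The ``in particular'' statement then follows immediately: the identity component $\overline{J}_{X,S}^0$ inherits the group law from the abelian group-valued functor $\overline{\Pic}_{X,S}$, and we have just shown its log properness.

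The main obstacle I anticipate is handling the torsor trivialization step cleanly, i.e., showing that a $(\G_m^{\log})^{|S|-1}$-torsor over a log annulus admits a section after suitable base change. For ordinary $\G_m$-torsors over a DVR this is trivial because line bundles on local schemes are free, but $\G_m^{\log}$-torsors are more subtle objects (and $\G_m^{\log}$ is not even representable). A cleaner bypass, which I believe is what the remark preceding the proposition is pointing to, is to recast the argument in terms of morphisms: verify that the class of log proper morphisms is stable under composition and base change, and that a torsor under a log proper group object is itself log proper as a morphism. Then the structural map $\overline{J}_{X,S}^n \to \Spec \Z$ factors as $\overline{J}_{X,S}^n \to \Pic_X^n \to \Spec\Z$, both arrows log proper, and the composition gives the conclusion without ever needing to trivialize the torsor explicitly.
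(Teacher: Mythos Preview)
Your proposal is correct and lands on essentially the same argument as the paper. The paper's proof is precisely your second-paragraph bypass: it observes that log properness (being a lifting condition for maps \emph{into} a log space) is preserved under fiber products, so a fibration with log-proper base and log-proper fibers has log-proper total space; then it invokes Propositions~\ref{logproperscheme} and~\ref{logproperGm} for $\Pic_X^n$ and $(\G_m^{\log})^{|S|-1}$ respectively. Your first-paragraph direct verification is more explicit than needed, and the trivialization obstacle you flag is exactly what the fiber-product/composition formulation avoids.
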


\begin{remark} 
Note that this is a manifestation of the ``magic" of log geometry: $\overline{\Pic}_{X,S}$ simultaneously compactifies the scheme $\Pic_{X,S}$ \textit{and} maintains its group structure, something impossible in the world of schemes. 
\end{remark}

\begin{remark}
    By construction, the log-Jacobian $\overline{J}_{X,S}$ is a $\left(\G_m^{\log}\right)^{|S|-1}$-torsor over the classical Jacobian $J$ of the curve. Applying the homotopy exact sequence and our discussion of the homotopy type of $\G_m^{\log}$ (see Section \ref{htpylogstack}), we can see the similarities between the $\pi_1$ of $\overline{J}_{X,S}$ (resp $J_{X,S}^n$) and the classical tamely ramified $\Pic_{X,S}$ (resp $\Pic_{X,S}^n$). Thus our logarithmic upgrading of $\Pic_{X,S}$ does not radically alter homotopy type; this will be an important sanity-check in comparing the logarithmic version of geometric class field of theory with its classical tamely-ramiified counterpart.
\end{remark}

\subsection{Character Sheaves on \texorpdfstring{$\overline{\Pic}_{X,S}$}{PicXS}} 

In this subsection, we shall work over the base field $k = \F_q$. We have seen that $\overline{\Pic}_{X,S}$ is a proper commutative group object in the category of log spaces. Recall that in the world of schemes, there is a canonical correspondence between 1-dimensional multiplicative $\overline{\Q}_{\ell}$-adic local systems over a commutative $\F_q$-group-scheme $G$ (i.e., character sheaves on $G$) and characters $G(\F_q) \to \overline{\Q_{\ell}}^{\times}$. Here we will derive an analogous correspondence for the log space $\overline{\Pic}_{X,S}$. 

Firstly, we review terminology. Let $G$ be a commutative group object in the category of logarithmic spaces, with $m : G \times G \to G$ the multiplication map. Let $0 \in G$ be the identity (that is, the subfunctor of $G: \textbf{Log Sch}^{\op} \to \textbf{Ab}$ such that $0(T)$ is the singleton set consisting of the identity in $G(T)$ for all $T$). We pick a log geometric point $\overline{0}$ above $0$. A 1-dimensional multiplicative local system (or a ``character sheaf") $\mathfrak{L}$ on $G$ is a local system $\mathfrak{L}$ on $G$, with a rigidification $\mathfrak{L}_{\overline{0}} \simeq \overline{\Q_{\ell}}$, and an isomorphism $\mu: m^{-1}(\mathfrak{L}) \to \mathfrak{L} \boxtimes \mathfrak{L}$.

For a log scheme $X$ defined over $\F_q$, we recall that the absolute Frobenius $F: X \to X$ is given by the identity on topological spaces, the map $f \mapsto f^q$ on functions, and $m \mapsto m^q$ (or $qm$ written additively) on monoids. Absolute Frobenius commutes with arbitrary morphisms, so for a logarithmic space $\mathfrak{X}$ over $\F_q$, we may define Frobenius as the natural transformation $\mathfrak{X}(T) \to \mathfrak{X}(T)$ given by $\phi \mapsto F_T^*(\phi)$. For instance, $F: \G_m^{\log}\to\G_m^{\log}$ is the $q$th power map $\mathcal{M}_T^{\gp} \to \mathcal{M}_T^{\gp}$; $x \mapsto x^q$ on $T$-points. 

Moreover, we have a natural faisceaux-fonctions correspondence: given a local system $\mathfrak{L}$ on $\mathfrak{X}$ and an $\F_q$-point $x \in \mathfrak{X}(\F_q)$ with $\Spec(\F_q)$ given trivial log structure, we have a well-defined map $\pi_1^{et}(\Spec(\F_q), \overline{x}) \to \pi_1^{\log}(\mathfrak{X}, \overline{x})$ (for $\overline{x}$ any geometric point of $\Spec(\F_q)$). The image of Frobenius gives us an element of $\pi_1(\mathfrak{X}, \overline{x})$; a local system is equivalent to a representation $\pi_1(\mathfrak{X}, \overline{x}) \to \End_{\overline{\Q}_{\ell}}(V)$ for $V$ a $\overline{\Q}_{\ell}$-space; this is well-defined up to conjugacy. Thus we may take the trace, giving a well-defined function $\mathfrak{X}(\F_q) \to \overline{\Q_{\ell}}$.

As in the category of commutative (non-log) group schemes over $\F_q$, we have the Lang isogeny $L: G \to G$, $g \mapsto F(g)g^{-1}$ which is well-defined for log spaces. For smooth $G$ this map is \'etale, and exhibits $G$ as a covering space of itself.

\begin{prop}\label{MultSysGmlog}
   Multiplicative local systems on $\G_m^{\log}$ are in bijection with characters of  $\F_q^{\times} = \G_m^{\log}(\F_q) = \ker\left(L_{\G_m^{\log}}\right)$. 
\end{prop}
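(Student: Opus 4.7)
The plan is to adapt the classical Lang-isogeny argument to the logarithmic setting. The Lang isogeny $L: \G_m^{\log} \to \G_m^{\log}$ sends $x \in \mathcal{M}_T^{gp}$ to $F(x) \cdot x^{-1}$, which is just multiplication by $q-1$ on the groupification of the monoid. First I would verify that $L$ is a Kummer étale Galois cover with Galois group $\F_q^{\times}$: it is Kummer because every $y \in \mathcal{M}_T^{gp}$ is a $(q-1)$st power Kummer-locally, étale because $q-1$ is prime to $p$, and its kernel is $\mu_{q-1} = \F_q^{\times}$ since $\mathcal{M}_T^{gp}/\mathcal{O}_T^{\times}$ is torsion-free for fs log structures (so any $x$ with $x^{q-1} = 1$ already lies in $\mathcal{O}_T^{\times}$).

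From characters to local systems: given $\chi : \F_q^{\times} \to \overline{\Q_\ell}^{\times}$, define $\mathfrak{L}_\chi$ as the pushout of the Lang torsor along $\chi$, or equivalently the $\chi$-isotypic summand of $L_* \overline{\Q_\ell}$. This is a rank-one Kummer étale local system on $\G_m^{\log}$, canonically rigidified at the identity since $\ker(L)$ is split there. Because $L$ is a group homomorphism, there is a commutative square relating $L$ with the multiplication $m$, and the multiplicativity of $\chi$ translates directly into the isomorphism $m^{*}\mathfrak{L}_\chi \cong \mathfrak{L}_\chi \boxtimes \mathfrak{L}_\chi$.

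For the inverse direction, given a multiplicative $\mathfrak{L}$, the faisceaux–fonctions correspondence produces $\chi_\mathfrak{L}(x) := \tr(\Fr_x \mid \mathfrak{L}_{\overline x})$ for $x \in \G_m^{\log}(\F_q) = \F_q^{\times}$; the rigidification together with multiplicativity forces this to be a homomorphism. To check the two assignments are mutually inverse, one direction is a direct trace computation; the other reduces to exhibiting a canonical trivialization of $L^{*}\mathfrak{L}$. Writing $L = m \circ (F \times \mathrm{inv}) \circ \Delta$, one computes
\[
L^{*}\mathfrak{L} \;\cong\; F^{*}\mathfrak{L} \otimes \mathrm{inv}^{*}\mathfrak{L} \;\cong\; \mathfrak{L} \otimes \mathfrak{L}^{-1} \;\cong\; \overline{\Q_\ell},
\]
where $F^{*}\mathfrak{L} \cong \mathfrak{L}$ canonically because absolute Frobenius is the identity on the (Kummer) étale site, and $\mathrm{inv}^{*}\mathfrak{L} \cong \mathfrak{L}^{-1}$ follows from multiplicativity applied to $m(x, x^{-1}) = 1$. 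The induced $\F_q^{\times}$-Galois action on this trivialization recovers $\chi_\mathfrak{L}$, and Kummer étale descent along $L$ identifies $\mathfrak{L}$ with $\mathfrak{L}_{\chi_\mathfrak{L}}$.

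The main obstacle will be carrying out Kummer étale descent rigorously for the non-representable functor $\G_m^{\log}$; in particular, interpreting $L_* \overline{\Q_\ell}$ and its isotypic decomposition correctly at the level of the Kummer étale topos of $\G_m^{\log}$. A concrete workaround is to pass to the log blowup $\P^1_{\log} \to \G_m^{\log}$ from Section \ref{htpylogstack}, where the relevant descent statements reduce to standard facts on an honest fs log scheme, and then to use the acyclicity of log blowups to transfer the result back to $\G_m^{\log}$.
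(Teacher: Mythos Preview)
Your Lang-isogeny approach is correct and is a genuinely different route from the paper's. The paper does not actually use the Lang map $L$ (despite introducing it just before the proposition); instead it directly invokes Proposition~\ref{tamegroupisos} to write $\pi_1^{\log,\ab}(\G_m^{\log}) \cong \F_q^{\times} \times \widehat{\Z}$ and $\pi_1^{\log,\ab}(\G_m^{\log} \times_{\F_q} \G_m^{\log}) \cong \F_q^{\times} \times \F_q^{\times} \times \widehat{\Z}$, then computes the effect of $m$, $\pi_1$, $\pi_2$ on these groups and reads off that the multiplicativity constraint $m^*\chi = \pi_1^*\chi \cdot \pi_2^*\chi$ holds precisely for characters trivial on the $\widehat{\Z}$ factor. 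So the paper's argument is a short group-theoretic check resting on an earlier $\pi_1$ calculation, whereas yours is the classical character-sheaf construction transported to the log setting.

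What each buys: the paper's method is shorter here because Proposition~\ref{tamegroupisos} has already done the work, and it avoids having to make sense of torsors and descent over a non-representable functor. Your approach is more intrinsic and closer to how one usually thinks about character sheaves; it makes the Lang isogeny do something, and it would generalize verbatim to any commutative log group space for which the $[q-1]$ map is a finite Kummer \'etale cover. Your identification of $L$ with $[q-1]$ and of $\ker L$ with $\mu_{q-1}$ (using torsion-freeness of $\overline{\mathcal{M}}$ for fs log structures) is clean, and the workaround via the acyclic log blowup $\P^1_{\log} \to \G_m^{\log}$ is exactly the right way to make the descent honest. One small point to tighten: make explicit that the character produced by faisceaux--fonctions coincides with the one read off from the $\F_q^{\times}$-equivariant structure on the trivialization of $L^*\mathfrak{L}$; this is routine but is the hinge on which your ``mutually inverse'' claim turns.
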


\begin{proof} We know from Proposition \ref{tamegroupisos} that $\pi_1^{\log}(\G_m^{\log}, \overline{x}) = \widehat{\Z(1)}'\rtimes_q \widehat{\Z}$ and $\pi_1^{\log, \ab} (\G_m^{\log})$ is $\F_q^{\times} \times \widehat{\Z}$. Similarly we compute $\pi_1^{\log} (\G_m^{\log} \times_{\F_q} \G_m^{\log}) = \left( \widehat{\Z(1)}'\times \widehat{\Z(1)}'\right)\rtimes_q \widehat{\Z}$ and $\pi_1^{\log, \ab} (\G_m^{\log} \times_{\F_q} \G_m^{\log}) = \F_q^{\times} \times \F_q^{\times} \times \widehat{\Z}$. Multiplication $m: \G_m^{\log} \times \G_m^{\log} \to \G_m^{\log}$ induces the map $\F_q^{\times} \times \F_q^{\times}\times \widehat{\Z} \to \F_q^{\times} \times \widehat{\Z}$ given by $(x,y; z)\mapsto (xy; z)$, while the two projections correspond to $(x, y; z) \mapsto (x; z)$ and $(x,y;z)\mapsto (y;z)$.
Given a character $\chi: \pi_1^{\log, \ab} (\G_m^{\log}) \to \overline{\Q_{\ell}}^{\times}$. We see that $m^*(\chi) = \pi_1^*(\chi) \cdot \pi_2^*(\chi)$  iff $\chi$ is trivial on $\widehat{\Z}$; that is, a character on $\F_q^{\times}$. 
\end{proof}

From this we deduce:

\begin{prop}\label{CharsheavesGmlog}
   Character sheaves on $\overline{\Pic}_{X,S}$ correspond to characters of $\overline{\Pic}_{X,S}(\F_q)$.
\end{prop}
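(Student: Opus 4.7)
The plan is to deduce the proposition from Proposition \ref{MultSysGmlog} together with the classical character-sheaf theory for abelian varieties, using the pushout structure of $\overline{\Pic}_{X,S}$ in (\ref{logpushout}). Since each $s \in S$ is assumed rational, $\overline{\mathcal{T}} \simeq (\G_m^{\log})^{|S|-1}$, and the bottom row of (\ref{logpushout}) presents $\overline{\Pic}_{X,S}$ as a $(\G_m^{\log})^{|S|-1}$-torsor over the abelian variety $\Pic_X$:
\[
1 \to (\G_m^{\log})^{|S|-1} \to \overline{\Pic}_{X,S} \to \Pic_X \to 1.
\]

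First I would apply the homotopy exact sequence (\ref{htpyexactstack}) to this fibration: log-properness of the fibers is Proposition \ref{logproperGm}, log-smoothness is clear from the toric description, and the stacky extension of Theorem \ref{loghtpyseq} is exactly what is reviewed in Section \ref{htpylogstack}. After abelianization, combining Proposition \ref{tamegroupisos} on each $\G_m^{\log}$-factor with the classical computation of $\pi_1^{\ab}(\Pic_{X,\F_q})$ yields a presentation of $\pi_1^{\log, \ab}(\overline{\Pic}_{X,S})$ as an extension of $\pi_1^{\ab}(\Pic_{X,\F_q})$ by a quotient of $(\F_q^{\times} \times \widehat{\Z})^{|S|-1}$, with each $\widehat{\Z}$-factor generated by the arithmetic Frobenius on the corresponding $\G_m^{\log}$ coordinate.

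Next I would run the multiplicativity analysis of Proposition \ref{MultSysGmlog} in parallel on each factor. A character $\chi$ of $\pi_1^{\log, \ab}(\overline{\Pic}_{X,S})$ defines a character sheaf exactly when $m^*\chi = \pi_1^*\chi \cdot \pi_2^*\chi$, and since multiplication on $\overline{\Pic}_{X,S}$ respects the extension in (\ref{logpushout}), this condition restricts factor-by-factor. On each $\G_m^{\log}$-factor, Proposition \ref{MultSysGmlog} forces triviality on the $\widehat{\Z}$-piece, leaving only a character of $\G_m^{\log}(\F_q) = \F_q^{\times} = \G_m(\F_q)$; so multiplicative characters of $\pi_1^{\log, \ab}(\overline{\mathcal{T}}_{\F_q})$ are exactly characters of $\overline{\mathcal{T}}(\F_q) = \mathcal{T}(\F_q)$. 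On the $\Pic_X$-quotient, the classical theory of character sheaves on abelian varieties identifies multiplicative characters with characters of $\Pic_X(\F_q)$. Assembling these along the extension
\[
1 \to \overline{\mathcal{T}}(\F_q) \to \overline{\Pic}_{X,S}(\F_q) \to \Pic_X(\F_q) \to 1
\]
coming from Lang's theorem applied to (\ref{logpushout}) yields the desired bijection with characters of $\overline{\Pic}_{X,S}(\F_q)$.

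The main obstacle is the careful treatment of the multiplicativity condition across the non-split extension: one must verify that a character sheaf on the total space is genuinely determined by its restriction to the fiber together with its descent to the base, and that the Frobenius-triviality that cuts out ``characters on $\F_q$-points'' on each piece glues correctly along the extension (rather than, say, contributing a nontrivial $H^2$-obstruction). A useful parallel sanity check is to construct $\mathfrak{L}_\chi$ directly via the Lang-style isogeny $L : \overline{\Pic}_{X,S} \to \overline{\Pic}_{X,S}$, $g \mapsto F(g) g^{-1}$, whose kernel on $\F_q$-points is precisely $\overline{\Pic}_{X,S}(\F_q)$; this should realize the inverse construction and make the bijection canonical.
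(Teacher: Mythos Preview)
Your outline is sound and shares the paper's overall architecture: exploit the $(\G_m^{\log})^{|S|-1}$-torsor structure over $\Pic_X$, reduce the multiplicativity condition to killing the arithmetic $\widehat{\Z}$-factor (as in Proposition \ref{MultSysGmlog}), and identify what remains with characters of $\F_q$-points. The difference lies in how each argument handles precisely the obstacle you flag in your last paragraph.

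The paper sidesteps the extension problem by first base-changing to $\overline{\F_q}$ and computing the \emph{geometric} $\pi_1^{\log}$ of $\overline{J}_{X,S}$ via its torsion subgroups: applying the snake lemma to multiplication-by-$N$ on the exact sequence $1 \to \overline{T} \to \overline{J}_{X,S} \to J_X \to 1$ yields an exact sequence of $N$-torsion, and since this is a sequence of \emph{finite} group schemes, $\Ext^1$ vanishes and the sequence splits for every $N$ prime to $q$. Taking the inverse limit gives an explicit product decomposition $\pi_1^{\log}(\overline{J}_{X,S,\overline{\F_q}}) \simeq (\widehat{\Z(1)}')^{|S|-1} \times \widehat{J_X(\overline{\F_q})_{\mathrm{tors}}}$, and the arithmetic $\widehat{\Z}$ is then reattached via the usual base-change exact sequence for $\pi_1$. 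With this explicit formula for $\pi_1^{\log,\ab}$ in hand, the multiplicativity argument of Proposition \ref{MultSysGmlog} runs verbatim on the whole group at once, with no residual extension data to track.

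Your route through the homotopy exact sequence directly over $\F_q$ is workable, but as you correctly note, it leaves you assembling multiplicativity across a potentially non-split extension. The Lang-isogeny construction you sketch at the end is the cleanest way to close that gap in your framework and would yield a correct proof; the paper's $N$-torsion-plus-snake-lemma computation is an alternative that trades that step for a direct splitting argument over the algebraic closure.
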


\begin{proof}
    We perform a series of reductions. First, let us base change to $\overline{\F_q}$. Then we have the exact sequence 

    \[
    1 \to \overline{T} \to \overline{J}_{X,S} \to J_X \to 1.
    \]

    \noindent where $\overline{T} = \left(\G_m^{\log}\right)^{|S|-1}$. Let $N$ be relatively prime to $q$; the multiplication-by-$N$ map is finite \'etale and surjective on $\overline{T}$. Then by the snake lemma applied to the vertical kernels and cokernels of the diagram

    \[
\begin{tikzcd}
1 \arrow[r] & \overline{T} \arrow[r] \arrow[d, "N"] & {\overline{J}_{X,S}} \arrow[r] \arrow[d, "N"] & J_{X} \arrow[r] \arrow[d, "N"] & 1 \\
1 \arrow[r] & \overline{T} \arrow[r]                & {\overline{J}_{X,S}} \arrow[r]                & J_{X} \arrow[r]                & 1,
\end{tikzcd}
    \]

    \noindent we find that

    \[
    1 \to \overline{T}[N] \to \overline{J}_{X,S}[N] \to J_X[N] \to 1
    \]

    \noindent is exact. Moreover, this is an exact sequence of finite group schemes so $\Ext^1(J_X[N], \overline{T}[N])$ is trivial; we conclude that this sequence splits for all $N$. Hence $\underset{\underset{N}{\longleftarrow}}{\lim}\overline{J}_{X,S}[N] = \left(\widehat{\Z(1)}'\right)^{|S|-1} \times \widehat{J_X(\overline{\F_q})_{\textrm{tors}}}$, and we recognize this as $\pi_1^{\log}\left({\overline{J}_{X,S, \overline{\F_q}}}, \overline{1}\right)$ for a log geometric point lying above the identity.

    By the exact sequence for $\pi_1$ under base change of a field, we see that 

    \[
    \pi_1\left({\overline{J}_{X,S}}\right) = \left( \left(\widehat{\Z(1)}'\right)^{|S|-1}  \times \widehat{J_X(\overline{\F_q})_{\textrm{tors}}}\right)\rtimes\widehat{\Z}
    \]

    \noindent where the topological generator of the last factor acts via Frobenius on the factors in parentheses. Thus 
    
    \begin{equation}\label{pi1compute}
    \pi_1^{\log,\ab}\left(\overline{J}_{X,S}\right) = \left(\mu_{q-1}^{|S|-1} \times \widehat{J_X(\overline{\F_q})_{\textrm{tors}}}\right)\times\widehat{\Z}.
    \end{equation}
    
    Repeating the argument above, we see that a 1-dimensional multiplicative local system on $\overline{J}_{X,S}$ must correspond to a character of \ref{pi1compute} that kills the $\widehat{\Z}$-factor; i.e., it is a character of $\overline{J}_{X,S}(\F_q)$.

    Finally, we note that $\Pic_{X,S}$ is a (split) extension of $\Z$ by $\overline{J}_{X,S}$; the lemma follows.

\end{proof}

\section{The Map \texorpdfstring{$\psi_n: X^{(n)}_{\log} \to \overline{J}_{X,S}^{n}$}{psin: X(n)log to JXSn}}\label{GeomSymPic}

Recall that Deligne's proof of geometric class field theory boils down to scrutinizing the fibers of the map $X^{(n)} \to \Pic^n_X$ (which are $\P^N$'s). In the classical tamely ramified story (\cite{toth2011geometric}), the geometry of the map $U^{(n)} \to \Pic_{X,S}$ plays the central role (with $U = X \setminus S$). In particular, one must construct a relative compactification of $U^{(n)} \to \Pic_{X,S}$ (loc. cit.). We have already upgraded the target of this map to a logarithmic compactification of the classical rigidified Jacobian $\Pic_{X,S}$. We would now like to upgrade the source, too, so that the map is proper and so that, for sufficiently large $n$, $X^{(n)}_{\log} \to \overline{J}^{n}_{X,S}$ is surjective. In other words, we would like to compactify $U^{(n)}$. 

To leverage the connection between \'etale fundamental groups of log schemes and tamely ramified \'etale fundamental groups (see Corollary \ref{FundIso}), we must have $X^{(1)}_{\log(S)} = X(\log S)$. (Indeed, by Corollary \ref{FundIso}, we see that local systems on $X(\log S)$ correspond to representations of the tamely ramified fundamental group. Thus local systems on $X(\log S)$ are what will provide our connection with the tamely ramified local systems on $X$.)

One approach would be to formulate a logarithmic analogue of $\Div^n(X (\log S))$, the degree $n$ divisors of $X(\log S)$; in other words to describe the Hilbert scheme of points of $X(\log S)$. Recently, the notion of the logarithmic Hilbert scheme (and, more generally, the logarithmic Quot scheme) of an fs log scheme was defined by Maulik-Ranganathan \cite{MaulikRanganathan_LogDT_2024} and (in full generality) by Kennedy-Hunt \cite{kennedyhunt}. The geometry of the map $\text{Hilb}^n(X(\log S)) \to \overline{\Pic}_{X,S}$ given by this formulation will be studied in a later paper.

For our purposes, the analogue of the symmetric powers of $X$ in Deligne's argument will be less sophisticated: we will use the scheme $X^{(n)}$ with a divisorial log structure induced by $S$. We now make this precise.

\subsection{The Definition of \texorpdfstring{$X^{(n)}_{\log}$}{X(n)logS}} Let $X^{(n)}$ denote the $n$th symmetric power of $X$, and let $S^{(n)}$ denote the complement of $U^{(n)}$ in $X^{(n)}$. Let $r = |S|$. We define the divisorial log scheme $X^{(n)}_{\log(S)}$ to be $X^{(n)}(\log S^{(n)})$. We will usually write this as $X^{(n)}_{\log}$ for simplicity. Implicit in this definition is the following:

\begin{prop}\label{normcrosslem} $S^{(n)}$ is a normal crossings divisor in $X^{(n)}$.
\end{prop}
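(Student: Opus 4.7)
The plan is to verify smoothness of each irreducible component of $S^{(n)}$ and then transversality at intersections by means of an étale-local product decomposition of $X^{(n)}$ near a point with repeated coordinates.

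First, I would write $S^{(n)} = \bigcup_{s \in S} D_s$, where $D_s = \{[x_1 + \cdots + x_n] \in X^{(n)} : x_i = s \text{ for some } i\}$. Each $D_s$ is the image of $\{s\} \times X^{n-1}$ under the quotient map $X^n \to X^{(n)}$; the natural map $X^{(n-1)} \to X^{(n)}$ sending $[D] \mapsto [D + s]$ induces an isomorphism of $X^{(n-1)}$ with $D_s$. Since $X^{(n-1)}$ is smooth (symmetric powers of smooth curves are smooth), each $D_s$ is a smooth effective Cartier divisor in $X^{(n)}$.

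Next, I would show that near any point $P$ of $X^{(n)}$ one has an étale-local product decomposition. Write $P = [a_1 p_1 + \cdots + a_k p_k]$ for distinct closed points $p_1, \dots, p_k \in X$ with multiplicities $a_i \ge 1$ summing to $n$. A standard fact about symmetric products of curves is that, étale-locally near $P$, there is an isomorphism
\[
X^{(n)} \times_X \text{(etale nbhd of }P\text{)} \;\cong\; \prod_{i=1}^{k} X^{(a_i)}_{(p_i)},
\]
where $X^{(a_i)}_{(p_i)}$ denotes the henselization of $X^{(a_i)}$ at the point $a_i p_i$. Moreover, each local factor $X^{(a_i)}_{(p_i)}$ is smooth of dimension $a_i$, with regular coordinates given by the elementary symmetric polynomials $e_1^{(i)}, \dots, e_{a_i}^{(i)}$ in a local parameter $t_i$ at $p_i$. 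Under this decomposition, the divisor $D_{p_i}$ (if $p_i \in S$) pulls back to $\{e_{a_i}^{(i)} = 0\}$ in the $i$th factor and is all of $X^{(a_j)}_{(p_j)}$ in the other factors.

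Finally, I would assemble: the divisors $D_s$ for $s \in S$ appearing at $P$ are pulled back from different factors of the product decomposition, and each is cut out by a single regular parameter ($e_{a_i}^{(i)}$) in its factor. These parameters can be extended to a regular system of parameters on the full local product, so the $D_s$ passing through $P$ are cut out locally by distinct members of a regular system of parameters. This is precisely the normal crossings condition. The main thing to be careful about is the étale-local product description when multiple points of $S$ coincide at $P$, but once that structural result is in hand, normal crossings is immediate.
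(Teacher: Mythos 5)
Your proposal is correct. It is close in spirit to the paper's argument but organizes the key transversality step differently. The paper reduces globally to $(\A^1)^{(n)} \cong \Spec\, k[\sigma_1,\ldots,\sigma_n] \cong \A^n$ (using that a curve is Zariski-locally \'etale over $\A^1$, via Noether normalization), observes that each $\{s\}^{(n)}$ is cut out by the affine-linear equation $s^n + s^{n-1}\sigma_1 + \cdots + \sigma_n = 0$, and concludes that $S^{(n)}$ is a union of distinct affine hyperplanes, hence normal crossings. Strictly speaking that last step also needs the observation that the hyperplanes meeting at a given point have linearly independent linear parts (which holds here by Vandermonde nondegeneracy of the vectors $(s^{n-1},\ldots,s,1)$ for distinct $s$, together with the fact that at most $n$ of them pass through any point); the paper treats this as manifest. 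Your version instead localizes at a point $P = [a_1 p_1 + \cdots + a_k p_k]$ and invokes the \'etale-local product decomposition of $X^{(n)}$ into factors $X^{(a_i)}_{(p_i)}$ with elementary symmetric coordinates; since the divisors $D_s$ through $P$ come from \emph{distinct} factors and each is cut out by the top elementary symmetric function of its own factor, transversality is genuinely automatic and no independence argument is needed. The cost is that you must justify the local product decomposition (which ultimately rests on the same \'etale-over-$\A^1$ reduction), but the payoff is a cleaner and more robust transversality step. Either route is fine.
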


\begin{proof}
    Let us first check that this holds for the case of $(\A^1)^{(n)}$. Observe that $(\A^1)^{(n)} \simeq \Spec\, k[x_1, \ldots, x_n]^{S_n} = \Spec \, k[ \sigma_1 , \ldots, \sigma_n] \simeq \A^n$, where $\sigma_i$ is the $i$th elementary symmetric polynomial in the $x_i$. Abusing notation slightly, let us say $s \in \A^1$ is a single closed point corresponding to the element $s \in k$. The divisor $(\{s\})^{(n)}$ in $(\A^1)^{(n)}$ is then cut out by the equation

    \[
    s^n + s^{n-1} \sigma_1 + \cdots +\sigma_n = 0,
    \]

    \noindent which is simply a linear affine hyperplane in $(\A^1)^{(n)} \simeq \A^n$. Thus $S^{(n)}$ is a union of distinct affine linear hyperplanes, which is manifestly a normal crossings divisor. 

    The general case may be reduced to this one, since (by Noether normalization) a curve is Zariski-locally \'etale over $\A^1$.
\end{proof}

\subsection{The Functor of Points of \texorpdfstring{$X^{(n)}_{\log}$}{X(n)log}}

We would like to describe the functor of points of $X^{(n)}_{\log}$; i.e., to describe the set of maps $T \to X^{(n)}_{\log}$ for a test log scheme $T$.

Firstly, observe that if we pick $\varpi_s$ to be a local uniformizer of $s$ in $X$ for each $s \in S \subset X$, then $\varpi_s^{(n)} := \varpi_s \otimes \cdots \otimes \varpi_s \in \mathcal{O}\left(X^n\right)^{S_n}$ (product taken $n$ times) is a uniformizer for the divisor $\{s\}^{(n)}$ in $X^{(n)}$. Let $V_s \subset X$ be the largest open set of $X$ such that $\varpi_s$ is regular and nonvanishing except at $s$; observe that $\varpi_s \in \mathcal{O}_X(V_s)$, and $\varpi_s^{(n)} \in \mathcal{O}_{X^{(n)}}(V_s^{(n)})$ (that is to say, $\varpi_s$ is regular on $V_s$ and $\varpi_s^{(n)}$ is regular on $V_s^{(n)}$). Let us write $\pi_s^{(n)} \in \mathcal{M}_{X^{(n)}}\left(V_s^{(n)}\right)$ for the corresponding generator of the log structure $\mathcal{M}_{X^{(n)}}$ (that is, the local section of the monoid sheaf such that $\alpha\left(\pi_s^{(n)}\right) = \varpi_s^{(n)}$).

We observe that a map $T \to X^{(n)}_{\log}$ is given by the following data:

1) a map of schemes $\psi: T \to X^{(n)}$; equivalently, an unordered set of $n$ maps $\psi_i: T \to X$, for $i = 1, \ldots, n$.

2) Sections $\mu_s \in \mathcal{M}_T\left(\psi^{-1}\left(V_s^{(n)}\right)\right)$, for each $s \in S$, such that 

\[
\alpha(\mu_s) = \psi^*\left(\varpi_s^{(n)}\right) \in \mathcal{O}_T\left(\psi^{-1}\left(V_s^{(n)}\right)\right).
\]

\noindent Indeed: each $\pi_s \in \mathcal{M}_{X^{(n)}}\left(V_s^{(n)}\right)$ must be sent to an appropriate section of $\mathcal{M}_T$, and specifying these sections uniquely determines the map of log schemes $T \to X^{(n)}$. In other words, the functor $T \mapsto \{\psi; (\mu_s)_{s \in S}\}$ describes the functor of points represented by $X^{(n)}_{\log}$.

We would like to rephrase this in terms of line bundles and sections. Firstly, recall that the symmetric power $X^{(n)}$ has a functor of points described as follows:

\[
T \mapsto \{(\mathcal{L}, \sigma)\}/\sim,
\]

\noindent where 

i)  $\mathcal{L}$ is a line bundle on $T \times X$;

ii) $\sigma$ is a nonzero global section, such that 

iii) the vanishing locus of $\sigma$, which we shall call $Z_\sigma \subset T \times X$, is flat over $T$; 

iv) $\sim$ is the natural equivalence relation $(\mathcal{L}, \sigma)\sim(\mathcal{L}', \sigma') $ induced by isomorphisms $\mathcal{L} \xrightarrow{\sim} \mathcal{L}'$ carrying $\sigma$ to $\sigma'$. Equivalently, we identify $\mathcal{L}$ up to isomorphism and $\sigma$ up to $\mathcal{O}_T^{\times}$-scalar multiple.

Such a pair $(\mathcal{L}, \sigma)$ canonically corresponds to a map of schemes $\psi: T \to X^{(n)}$ (or an unordered set of $n$ maps $\{\psi_i: T \to X\}_{i=1, \ldots, n}$). As for the $\mu_s$, we claim that specifying them is equivalent to specifying sections

\[
\widetilde{\theta}_s \in \Gamma\left( \mathcal{M}_T \times^{\mathcal{O}_{T\times \{s\}}^{\times}} \left(\mathcal{L}^*|_{T \times\{s\}}\right)\right),
\]

\noindent for each $s \in S$, where $*$ means nonzero\footnote{Though observe that such a section $\sigma$ of $\mathcal{L}^*$ may actually restrict to 0 in $\mathcal{L}|_{T \times \{s\}}$. This occurs precisely when one of the maps $T \to X$ corresponding to $\sigma$ is the constant map into $s$.} sections of $\mathcal{L}$ (which we restrict to $T \times\{s\}$), such that the map 

\[
\mathcal{M}_T \times^{\mathcal{O}_T^{\times}} \left(\mathcal{L}^*|_{T \times\{s\}}\right) \to \mathcal{O}_T \otimes_{\mathcal{O}_T}\mathcal{L}|_{T \times\{s\}}
\]

\noindent induced by $(\alpha, 1)$, sends $\widetilde{\theta}_s \mapsto \sigma|_{T \times\, \{s\}}$.

\begin{prop}
$X^{(n)}_{\log}$ is represented by the functor 

\[
T \mapsto \{(\mathcal{L}, \, \sigma, \,(\widetilde{\theta}_s)_{s \in S})\}/\sim,
\]

\noindent where $\mathcal{L}$, $\sigma$, and $(\widetilde{\theta}_s)_{s \in S}$ are as above, and the equivalence relation $\sim$ is given as follows: for a fixed isomorphism class $\mathcal{L} \in \Pic_X(T)$, we say $(\mathcal{L}, \, \sigma, \, \widetilde{\theta}_S) \sim (\mathcal{L}, \, \lambda \sigma, \, \lambda\widetilde{\theta}_S)$ for all $\lambda \in \mathcal{O}_T^{\times}$.

\end{prop}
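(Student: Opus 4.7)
The plan is to leverage the previously established description of $X^{(n)}_{\log}$-valued points as pairs $(\psi, (\mu_s)_{s \in S})$ and translate each component into the line-bundle/section language. The classical moduli description of the symmetric power tells us that giving a map $\psi : T \to X^{(n)}$ is the same as giving an equivalence class of pairs $(\mathcal{L}, \sigma)$, where $\mathcal{L}$ is a line bundle of relative degree $n$ on $T \times X$ and $\sigma$ is a global section with $T$-flat vanishing locus, taken modulo isomorphisms of line bundles carrying one section to the other; after fixing an isomorphism class of $\mathcal{L}$, the residual freedom is exactly $\sigma \sim \lambda \sigma$ for $\lambda \in \mathcal{O}_T^{\times}$. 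This recovers the $(\mathcal{L}, \sigma)$ portion of the proposed functor.

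Next I would transcribe the logarithmic lifts $\mu_s$ as intrinsic sections $\widetilde{\theta}_s$. On a Zariski neighborhood of $T \times \{s\}$ in $T \times X$ the bundle $\mathcal{L}$ is trivializable, and for a suitable trivialization the section $\sigma$ is represented by a function $f$ whose restriction to $T \times \{s\}$ equals $\psi^{*}\bigl(\varpi_s^{(n)}\bigr)$; this follows from the fact that both cut out the preimage of the divisor $\{s\}^{(n)} \subset X^{(n)}$. Given a lift $\mu_s \in \mathcal{M}_T$ of $\psi^{*}(\varpi_s^{(n)})$, one sets
\[
\widetilde{\theta}_s := [\mu_s, e_s] \in \mathcal{M}_T \times^{\mathcal{O}_T^{\times}} \bigl(\mathcal{L}^{*}|_{T \times \{s\}}\bigr),
\]
where $e_s$ is the nowhere-vanishing section of $\mathcal{L}|_{T \times \{s\}}$ furnished by the chosen trivialization. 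By construction the map $(\alpha, 1)$ sends $\widetilde{\theta}_s$ to $\sigma|_{T \times \{s\}}$. Conversely, any class in the fiber product can be rescaled via $(tu, v) \sim (t, uv)$ to a representative of the form $[\mu_s, e_s]$, recovering a canonical $\mu_s$ once the trivialization is fixed.

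The last step is to verify that the construction is independent of the local trivialization and that the equivalence relations agree. Replacing the trivialization by $\lambda \in \mathcal{O}_T^{\times}$ changes $f$ to $\lambda f$ (hence $\sigma$ to $\lambda \sigma$) and rewrites $[\mu_s, e_s]$ as $[\mu_s, \lambda^{-1} e_s] = [\lambda \mu_s, e_s]$; both effects are absorbed by the simultaneous rescaling $(\sigma, \widetilde{\theta}_s) \mapsto (\lambda \sigma, \lambda \widetilde{\theta}_s)$ that the proposition stipulates. Since automorphisms of $\mathcal{L}$ over $T \times X$ act by $\mathcal{O}_T^{\times}$, the full equivalence on triples $(\mathcal{L}, \sigma, (\widetilde{\theta}_s))$ corresponds exactly to the one coming from pairs $(\psi, (\mu_s))$. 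The main subtlety to address -- and the point at which the logarithmic structure really earns its keep -- is the degenerate locus where $\sigma|_{T \times \{s\}}$ vanishes, i.e., where some component of $\psi$ factors through $s$. There $\widetilde{\theta}_s$ is not a lift of a unit but a genuine monoidal section mapping to zero under $\alpha$, mirroring precisely the fact that $\mu_s$ lifts the vanishing function $\psi^{*}(\varpi_s^{(n)})$. Once this compatibility is checked, the functors agree naturally in $T$, completing the proof.
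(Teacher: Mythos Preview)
Your proposal is correct and follows essentially the same approach as the paper: both translate between the $(\psi,(\mu_s))$ description and the $(\mathcal{L},\sigma,(\widetilde\theta_s))$ description by invoking the classical moduli interpretation of $X^{(n)}$ for the scheme part and then using a local trivialization of $\mathcal{L}$ near $T\times\{s\}$ (the paper's generator $e$, your $e_s$) to pass between $\mu_s$ and $\widetilde\theta_s$. The paper is slightly more explicit about writing out the Cartier divisor and gluing $\widetilde\theta_s$ over two open sets, while you dwell more on independence of the trivialization and the degenerate locus, but these are complementary presentational choices rather than different arguments.
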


\begin{proof} To see this, we shall construct the maps in both directions. Let us begin with an unordered set of maps $\{\psi_i\}: T \to X$ and sections $\mu_s \in \mathcal{M}_T\left(\psi^{-1}\left(V_s^{(n)}\right)\right)$ such that $\alpha (\mu_s) = \psi^*\left(\varpi_s^{(n)}\right)$. From this we shall construct $(\mathcal{L}, \sigma, \widetilde{\theta}_S)$. The $n$ maps $\psi_i: T \to X$ correspond to the Cartier divisor on $T \times X$ given by the $\mathcal{O}_{T \times X}$-submodule of the total quotient $K(T \times X)$ generated by the collection:

\[
\prod_{i =1}^n \frac{1}{\psi_i^*(f) \otimes 1 - 1 \otimes f},
\]

\noindent for $f \in \mathcal{O}_X$. (Note that this is indeed a Cartier divisor; locally a single such $f$ generates this submodule since $X$ is a curve.) The section $\sigma$ corresponds to the rational function $1 \in K(T \times X$). This gives us $\mathcal{L}$ and $\sigma$, as in the classical story.

On the other hand, given $\mu_{s}$, we define $\widetilde{\theta}_{s}$ as

\begin{align*}
\widetilde{\theta}_s &= \mu_s \times^{\mathcal{O}_T^{\times}}\prod_{i =1}^n\frac{1}{\psi_i^*(\varpi_s)\otimes 1 -1 \otimes \varpi_s}\bigg\vert_{T \times \{s\}}\\
&= \mu_s \times^{\mathcal{O}_T^{\times}}\prod_{i =1}^n\frac{1}{\psi_i^*(\varpi_s)\otimes 1}\\
&= \mu_s \times^{\mathcal{O}_T^{\times}}\frac{1}{\psi^*\left(\varpi_s^{(n)}\right)\otimes 1}
\end{align*}

\noindent over the open set $\psi^{-1}\left(V_s^{(n)}\right)$ in $T$, while we define it as $1 \times^{\mathcal{O}_T^{\times}} 1$ on $\psi^{-1}(X^{(n)} - D_s)$. We see that these glue on the open set for which $\mu_s \in \mathcal{O}_T^{\times} \subset \mathcal{M}_T$. Moreover: 

\[
(\alpha,1)(\theta_s)= \psi^*(\varpi_s^{(n)}) \otimes_{\mathcal{O}_T} \frac{1}{\psi^*(\varpi_s^{(n)}) } = 1 = \sigma|_{T \times \{s\}},
\]

\noindent where we view $\sigma \in K(T \times X)$. This verifies the necessary constraint on $\widetilde{\theta}_s$. 

Conversely, given $(\mathcal{L}, \sigma, \{\widetilde{\theta}_s\}_{s \in S})$ we construct $\psi$ (equivalently, $\{\psi_i\}$) from $\mathcal{L}$ and $\sigma$ as one does classically; we construct $\mu_s$ as follows: $\sigma$ is a global section of $\mathcal{L}$ over $T\times X$ which vanishes at $\cup_i \psi_i^{-1}(\{s\}) = \psi^{-1}(D_s) \subset T$. So locally (in particular, over $\psi^{-1}(V_s)$) we may write

\[
\sigma|_{T \times \,\Spec\,\mathcal{O}_s} = \prod_{i=1}^n\psi_i^*(\varpi_s) e = \psi^*(\varpi_s^{(n)})e
\]

\noindent where $e$ is a generator (i.e., a nowhere-vanishing local section) of $\mathcal{L}|_{T \times  \,\Spec\,\mathcal{O}_s}$. Then, we may write

\[
\widetilde{\theta}_s = (\mu_s, e|_{T \times \{s\}}) \in \mathcal{M}_T\times^{\mathcal{O}_T^{\times}} \mathcal{L}^*|_{T \times \{s\}},
\]

\noindent where $\alpha(\mu_s) =  \prod_{i=1}^n\psi_i^*(\varpi_s) = \psi^*(\varpi_s^{(n)})$. Since such an $e$ is unique up to scaling by an element of $\mathcal{O}_{T \times \,\Spec\,\mathcal{O}_s}^{\times}$, we see that $\mu_s$ is actually a well-defined element of $\mathcal{M}_T(\psi^{-1}(V_s))$, giving us our desired section. 

\end{proof}

We observe that there are natural maps 

\begin{equation}\label{monoidalstr}
X^{(m)}_{\log}\times X^{(n)}_{\log} \to X^{(m+n)}_{\log},
\end{equation} 

\noindent given by $\left((\mathcal{L}, \sigma, \widetilde{\theta}_s), (\mathcal{L}', \sigma', \widetilde{\theta}_s')\right)\mapsto \left(\mathcal{L} \otimes\mathcal{L}', \sigma \otimes \sigma', \widetilde{\theta}_s\otimes \widetilde{\theta}_s'\right)$. This is analogous to the ``union" map $\Div^m(X) \times \Div^n(X) \to \Div^{m+n}(X)$.  More generally, if $n_1 + \cdots n_r = n$ is a partition of $n$, we have maps 

\begin{equation}\label{monoidalstrpart}
X^{(n_1)}_{\log}\times \cdots\times X^{(n_r)}_{\log} \to X^{(n)}_{\log}.
\end{equation} 

\noindent In particular, this endows $\sqcup_n X^{(n)}_{\log}$ with a monoidal structure. 

\subsection{The Map \texorpdfstring{$\psi_n: X^{(n)}_{\log } \to \overline{J}_{X,S}^n$}{psi n: X(n)toJXSn}}  Let us define the map of log spaces $\psi_n:X^{(n)}_{\log} \to \overline{J}_{X,S}^n$ by:

\[
\psi_n: (\mathcal{L}, \sigma, \widetilde{\theta}) \mapsto (\mathcal{L}, \theta)
\]

\noindent for every $(\mathcal{L}, \sigma, \widetilde{\theta}) \in X^{(n)}_{\log}(T)$. 

\begin{remark} Note that this is not quite a forgetful map: the $\widetilde{\theta}_s$ arising from $X^{(n)}_{\log}$ are identified under scaling by $\mathcal{O}_T^{\times}$ while the $\theta_s$ arising from $\overline{J}$ are identified under the more stringent scaling by $\mathcal{M}_T^{\gp}$. 
\end{remark}

\begin{remark} A slightly technical point is also worth mentioning here: the section $\sigma$ by assumption must be nonzero (and moreover $\sigma$ cannot vanish on any vertical fiber $\{t\}\times X \subset T \times X$). However, the restriction of $\sigma$ to the horizontal fibers $T\times\{s\}$, $s \in S$, may vanish. This occurs precisely when one of the maps $\psi_i: T \to X$ is constant onto $\{s\}$.
\end{remark}

\begin{remark}\label{AbelJacobi}
    Observe that for $n = 1$, we have the analogue of the Abel-Jacobi map: $\mathfrak{AJ}: X(\log S) \to \overline{J}_{X,S}^1$. This map extends the classical map $x \to (\mathcal{O}(x), 1, 1)$ for $x \in U = X \setminus S$. (Given a rational point $x_0 \in U$ we may of course tensor by $\mathcal{O}(-x_0)$ so that the Abel-Jacobi map lands in the logarithmic Jacobian $\overline{J}_{X,S}^0$, rather than the $\overline{J}^0$-torsor $\overline{J}^1$. But we shall adopt the slightly nonstandard formulation $\mathfrak{AJ}: X(\log S) \to \overline{J}_{X,S}^1\subset \overline{\Pic}_{X,S}$.)
\end{remark} 

\begin{prop}
    $\psi_n: X^{(n)}_{\log } \to \overline{J}_{X,S}^n$ is surjective for all $n \ge 2g -1 + |S|$.
\end{prop}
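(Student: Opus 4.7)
I would reduce the surjectivity of $\psi_n$ to a Riemann--Roch argument of the same flavor as the classical proof that $U^{(n)} \to \Pic_{X,S}^n$ is surjective for $n \geq 2g - 1 + |S|$. First I unpack the functors involved. A $T$-point of $\overline{J}_{X,S}^n$ is a pair $(\mathcal{L}, \theta)$ with $\mathcal{L}$ a degree-$n$ line bundle on $X \times T$ and $\theta = (\theta_s)_{s \in S}$ a family of sections of the $\mathcal{M}_T^{\gp}$-torsors $i_s^*(\mathcal{L}^\times) \times^{\mathcal{O}_T^\times} \mathcal{M}_T^{\gp}$, taken modulo the diagonal $\mathcal{M}_T^{\gp}$-action arising from $\Aut(\mathcal{L}) \times^{\mathcal{O}_T^\times} \mathcal{M}_T^{\gp} = \mathcal{M}_T^{\gp}$. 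A $T$-point of $X^{(n)}_{\log}$ is a triple $(\mathcal{L}, \sigma, \widetilde\theta)$ with $\widetilde\theta_s = (\mu_s, e_s) \in \mathcal{M}_T \times^{\mathcal{O}_T^\times} \mathcal{L}^*|_{T \times \{s\}}$ satisfying $\alpha(\mu_s) \cdot e_s = \sigma|_{T \times \{s\}}$. Surjectivity here should mean that every log geometric point of $\overline{J}_{X,S}^n$ admits a lift to $X^{(n)}_{\log}$, possibly after a Kummer \'etale cover.

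Using the diagonal $\mathcal{M}_T^{\gp}$-shift equivalence on $\theta$, I would normalize so that each $\overline\theta_s \in \overline{\mathcal{M}}_T^{\gp}$ actually lies in the sharp monoid $\overline{\mathcal{M}}_T$ (i.e., is ``non-negative'') and so that $S_+ := \{s \in S : \overline\theta_s \neq 0\}$ is a proper subset of $S$ (automatic when $|S| = 1$). The geometric heart of the argument is then Riemann--Roch: since $\deg \mathcal{L}(-S_{\mathrm{red}}) = n - |S| \geq 2g - 1$, where $S_{\mathrm{red}} = \sum_{s \in S} s$, we have $H^1(X, \mathcal{L}(-S_{\mathrm{red}})) = 0$, and the standard exact sequence
\[
0 \to \mathcal{L}(-S_{\mathrm{red}}) \to \mathcal{L} \to \mathcal{L}|_{S_{\mathrm{red}}} \to 0
\]
yields a surjection $H^0(X, \mathcal{L}) \twoheadrightarrow \bigoplus_{s \in S} \mathcal{L}|_s$. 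Using this I would produce a global section $\tau$ of $\mathcal{L}$ whose restriction $\tau|_s$ is zero precisely when $s \in S_+$ and otherwise matches the prescribed non-vanishing part of $\theta_s$; since $S \setminus S_+ \neq \emptyset$, such a $\tau$ is automatically nonzero. Setting $\sigma := \tau$ (so $D = \mathrm{div}(\tau)$ has degree $n$), I would construct $\widetilde\theta_s = (\mu_s, e_s)$ by taking $e_s \in \mathcal{L}^*|_{T \times \{s\}}$ matching the line-bundle component of $\theta_s$ and $\mu_s \in \mathcal{M}_T$ with $\overline\mu_s = \overline\theta_s$; the relation $\alpha(\mu_s) \cdot e_s = \tau|_s$ then holds in both cases (both sides vanish for $s \in S_+$, while $\mu_s$ is a unit matching by construction for $s \notin S_+$).

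The main obstacle I anticipate is the careful torsor bookkeeping in this last step, namely matching the $\mathcal{O}_T^\times$-equivalence on $\widetilde\theta_s$ with the $\mathcal{M}_T^{\gp}$-equivalence on $\theta_s$ in order to verify that $\psi_n(\mathcal{L}, \sigma, \widetilde\theta)$ really hits the class $(\mathcal{L}, \theta)$ rather than a twist of it. No further geometric input beyond Riemann--Roch is required: the entire surjectivity ultimately rests on the bound $n \geq 2g - 1 + |S|$, which is exactly the inequality producing the vanishing $H^1(X, \mathcal{L}(-S_{\mathrm{red}})) = 0$.
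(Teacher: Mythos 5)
Your proposal is correct and follows essentially the same route as the paper: clear denominators (normalize $\theta_s$ into $\mathcal{M}_T$), apply $\alpha$ to get prescribed values $a_s \in \mathcal{L}|_{T\times\{s\}}$, and use $H^1(X,\mathcal{L}(-S))=0$ from Riemann--Roch (valid exactly because $n \ge 2g-1+|S|$) together with the restriction sequence to lift $(a_s)_{s\in S}$ to a global section $\sigma$. Your additional normalization ensuring $S_+ \subsetneq S$ (so that $\sigma \neq 0$ automatically) is a point the paper glosses over, and it is only available when the $\overline\theta_s$ admit a minimum -- e.g.\ valuative-locally, consistent with how the paper treats the fibers in Theorem \ref{mainthm} -- but this is a refinement of the same argument, not a different one.
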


\begin{proof}
    Firstly, we observe that given a collection $\theta_s$ of global sections of $\mathcal{M}_T^{\gp} \times^{\mathcal{O}_T^{\times}} (\mathcal{L}^{*}|_{T \times\{s\}})$, we may ``clear denominators" in $\mathcal{M}_T^{\gp}$ to give an equivalent collection $\theta_s \in \mathcal{M}_T \times^{\mathcal{O}_T^{\times}} (\mathcal{L}^{*}|_{T \times\{s\}})$. We may then consider $\alpha(\theta_s)$ for each $s$, and these give us sections $a_s \in \mathcal{L}^{*}|_{T \times \{s\}}$. Now we may proceed as in the classical story: we have the exact sequence

\begin{equation}\label{exactseq}
\begin{tikzcd}
0 \arrow[r] & {H^0(X_T, \mathcal{L}(-S))} \arrow[r] & {H^0(X_T, \mathcal{L})} \arrow[r] & {H^0(X_T, \mathcal{L}/\mathcal{L}(-S))} \arrow[r] & {H^1(X_T, \mathcal{L}(-S))}
\end{tikzcd}
\end{equation}

\noindent where $X_T = T \times X$. But since the degree of $\mathcal{L}(-S)$ is greater than $2g-1$, Riemann-Roch shows that $H^1(X, \mathcal{L}(-S)) = 0$. Hence the map $H^0(X_T, \mathcal{L}(-S)) \to H^0(X_T, \mathcal{L})$ is surjective, and so, in particular, $(a_s)_{s \in S} \in H^0(X_T, \mathcal{L}/\mathcal{L}(-S))$ lies in the image of a global section $\sigma$ of $\mathcal{L}$.

\end{proof}

We now study the fibers of the map $X^{(n)}_{\log } \to \overline{J}_{X,S}^n$. Clearly, given $(\mathcal{L}, \theta) \in \overline{J}_{X,S}^{n} (T)$, the collection of preimages is given by:

\[
\psi_n^{-1}(\mathcal{L}, \theta := (\theta_s)_{s \in S}) = \{(\mathcal{L}, \sigma, \widetilde{\theta}:=(\widetilde{\theta}_s)_{s\in S})\}/\sim,
\]

\noindent where:

i)  $\sigma \in H^0(X_T, \mathcal{L})$ a global section of $\mathcal{L}$, such that the image of $\sigma$ in  $H^0(X_T, \mathcal{L}/\mathcal{L}(-S)) = \mathcal{L}|_S$ under (\ref{exactseq}) agrees with $\alpha(\theta_s)$;

ii) $\widetilde{\theta} = \{\widetilde{\theta}_s\}_{s \in S}$ is a lift of $\theta$, such that $\widetilde{\theta}_s \in H^0(T\times \{s\}, \mathcal{M}_T \times^{\mathcal{O}_T^{\times}}\mathcal{L}^{*}|_{T \times \{s\}})$;

\noindent where 

iii) we identify $(\sigma, \theta) \sim (\sigma', \theta')$ if they differ by an overall $\Gamma(\mathcal{O}_{T}^{\times})$ scalar factor. (Observe that since $X$ is projective,  $\Gamma(\mathcal{O}_{T\times X}^{\times}) \simeq \Gamma(\mathcal{O}_T^{\times})$.) 

Observe that the space of such $\sigma$  -- i.e., those with specified image in $H^0(X_T, \mathcal{L}/\mathcal{L}(-S)) = \mathcal{L}|_S$ (and not considering the equivalence relation in iii)) -- is an affine linear subspace of $H^0(X_T, \mathcal{L})$. I.e., it is an $H^0(X_T, \mathcal{L}(-S))$-torsor, by (\ref{exactseq}).

Zariski-locally on $T$, we may assume that $\mathcal{L}|_{T \times S}$ is trivial, so we may consider $(\theta_s)_{s \in S}$ to lie in $(M_T^{\gp})^{|S|}$. Then the space of lifts $\{\widetilde{\theta_s}\}_{s \in S}$ (again, forgetting the equivalence relation) is the preimage of $\{\theta_s\}_{s\in S}$ under 

\[
M_T^{|S|} \longrightarrow ( M_T^{\gp})^{|S|}/M_T^{\gp}
\]

\noindent where $M_T^{\gp}$ acts by uniform scaling of $(M_T^{\gp})^{|S|}$. Equivalently, this space is given by:

\begin{equation}\label{fiberlift}
\{(M_T^{\gp}\cdot (\theta_s)_{s \in S} \cap M_T^{|S|}) \}.
\end{equation}

Observe that there may not be a greatest common divisor of the $\theta_s$ in $M_T^{\gp}$. Thus we cannot simply write (\ref{fiberlift}) as $\{t \cdot \theta^0\}$, where $\widetilde{\theta}^0 \in M_T^{|S|}$ is some fixed generator of the ``line" (\ref{fiberlift}). However, if $T$ is a \textit{valuative} local log scheme, then this problem goes away.

Recall that an affine fs log scheme $(T, \mathcal{O}_T, M_T)$ is \textit{valuative} if for any $t \in T$ the stalk $\mathcal{M}_t$ satisfies $\mathcal{M}_t^{\gp} = \mathcal{M}_t \cup \mathcal{M}_t^{-1}$ (\cite{KatoVal}). In such a case, the $\widetilde{\theta}_s$ in any lift of $\theta$ are totally ordered by divisibility, and so, by scaling by the inverse of a minimal $\widetilde{\theta}_s$, we may find a representative $\widetilde{\theta}^0$ such that any minimal element is a unit in $M_{T}$. Thus for a valuative affine test log scheme $T$, we find that (\ref{fiberlift}) is $\{M_T \cdot \widetilde{\theta}^0\}$; in other words, (\ref{fiberlift}) is identified with $M_T$.

So now, let $(\mathcal{L}, \theta) \in \overline{\Pic}_{X,S}^n(T)$, where (restricting if necessary) we may assume $T$ is a valuative, fs, affine, test log scheme over which $\mathcal{L}|_{T\times\{s\}}$ is trivial for all $s \in S$. We assume that $n \ge 2g-1 + |S|$. Let $V := H^{0}(X, \mathcal{L})$, which is a finite-dimensional $k$-vector space. The map $H^{0}(T \times X, \mathcal{L}) \to H^{0}(T \times X, \mathcal{L}/\mathcal{L}(-S))$ will be denoted by $ev_S$, and, after trivializing $\mathcal{L}|_{T\times \{s\}} \simeq \mathcal{O}_T$ for all $s\in S$, this can be written as $ev_S: V \otimes \mathcal{O}_T \to \mathcal{O}_T^{|S|}$. Pick an $\mathcal{M}_T$-generator $\widetilde{\theta}^0$ lifting $\theta$ as above. Then the fiber $\psi_n^{-1}( (\mathcal{L}, \theta))$ is given by global sections of the sheaf (of sets):

\begin{equation}\label{fiberT}
\left( [V \otimes \mathcal{O}_T]^\times \times_{\mathcal{O}_T^{|S|}} \mathcal{M}_T\right)/\mathcal{O}_T^{\times},
\end{equation}

\noindent where we are adopting notation from Proposition \ref{CompactVectPt}.

We observe that \ref{fiberT} represents sections of a family of compactified vector spaces over $T$. Indeed, for $t \in T$ a geometric point, we notice that the fiber over $t$ is given by a point of the log scheme $\P(V_t)$ with divisorial log structure at the hyperplane $\P(H_t)$ -- where 

i) $V_t$ is the linear subspace of $V$ given by the preimage $ev_S^{-1}(L)$, where $L$ is the line generated by $\alpha(m_t) \widetilde{\theta}^0$ (with $m_t$ the restriction of $m \in \mathcal{M}_T$ to $t \in T$), and 

ii) $H_t  := ev^{-1}(0) \subseteq V_t$.

Applying Proposition \ref{CompactVectPt} and the notion of a relative compactified vector space, we have:

\begin{thm}\label{mainthm}
    Say $|S|>1$, and $n \ge 2g-1 + |S|$. The map $\psi_n: X^{(n)}_{\log} \to \overline{J}_{X,S}^{n}$ is surjective; moreover, it is valuative-locally a compactified vector bundle, with fibers of dimension $\dim H^0(X, \mathcal{L}) - |S|$ for (any) $\mathcal{L} \in \Pic_X^n(k)$.
\end{thm}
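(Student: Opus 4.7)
The plan is to combine the preceding work in the excerpt. Surjectivity for $n \ge 2g-1+|S|$ has already been established via the sequence (\ref{exactseq}) and the vanishing of $H^1(X_T,\mathcal{L}(-S))$ from Riemann--Roch, so what remains is to promote the valuative-local fiber calculation (\ref{fiberT}) into the statement that $\psi_n$ is a compactified vector bundle in the sense of Section \ref{compvecsp}, and to record the dimension count.

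First I would work on a valuative affine fs log scheme $T$ equipped with a $T$-point $(\mathcal{L},\theta)$ of $\overline{J}_{X,S}^{n}$. After trivializing $\mathcal{L}|_{T\times\{s\}}\simeq\mathcal{O}_T$ for each $s\in S$, the evaluation map $ev_S: V \otimes \mathcal{O}_T \to \mathcal{O}_T^{|S|}$ (with $V=H^0(X,\mathcal{L})$) is surjective by the Riemann--Roch bound on $n$, and the valuativity of $T$ allows one to replace the collection of lifts of $\theta$ by the principal line $\mathcal{M}_T \cdot \widetilde{\theta}^0$ through a chosen minimal representative $\widetilde{\theta}^0 \in \mathcal{M}_T^{|S|}$. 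In these coordinates the fiber description (\ref{fiberT}) is exactly the functor appearing in Proposition \ref{CompactVectPt}, with the linear functional being the composite of $ev_S$ with projection onto the line spanned by $\alpha(\widetilde{\theta}^0)$.

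Next I would identify the resulting geometry: at each geometric point $t \in T$, the fiber is $\P(V_t)$ with divisorial log structure along the hyperplane $\P(H_t)$, where $V_t = ev_S^{-1}(L_t)$ is the affine preimage of the distinguished line and $H_t = \ker(ev_S) = H^0(X,\mathcal{L}(-S))$. Flat base-change of $H^0$ (available since $n > 2g-2$) gives the required $\P^N$-bundle structure with $N=\dim V - |S|$ and a horizontal hyperplane divisor, while the pushout log structure axiom (iii) of a compactified vector bundle is baked directly into the fiber-product definition in (\ref{fiberT}). The dimension count then follows at once: $\dim V_t = \dim V - |S| + 1$, whence $\dim \P(V_t) = \dim H^0(X,\mathcal{L}) - |S|$.

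I expect the main obstacle to be checking that (\ref{fiberT}) really defines a relative compactified vector bundle and not merely a family of fibers at geometric points. This amounts to showing that the linear subspaces $V_t$ assemble into a locally free sheaf on $T$ and, crucially, that the compactified vector bundle structure on the fiber is intrinsic to $(\mathcal{L},\theta)$, i.e.\ independent of the auxiliary choice of minimal lift $\widetilde{\theta}^0$ up to the $\mathcal{O}_T^{\times}$-quotient. The "valuative-locally" qualifier in the statement is essential here, since without the valuativity hypothesis on $T$ the line of lifts of $\theta$ need not be principally generated, and the comparison with Proposition \ref{CompactVectPt} would break down.
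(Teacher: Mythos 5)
Your proposal follows essentially the same route as the paper: surjectivity is quoted from the preceding proposition via the sequence (\ref{exactseq}) and Riemann--Roch, and the fiber over a $T$-point of $\overline{J}_{X,S}^{n}$ for valuative affine $T$ is identified with the functor (\ref{fiberT}), which Proposition \ref{CompactVectPt} recognizes as a compactified vector space $\P(V_t)$ with log structure along $\P(H_t)$, yielding the dimension count $\dim H^0(X,\mathcal{L})-|S|$. The points you flag as potential obstacles (principal generation of the line of lifts, hence the valuative hypothesis, and well-definedness up to the $\mathcal{O}_T^{\times}$-quotient) are exactly the points the paper's own discussion emphasizes, so the proposal is correct and matches the intended argument.
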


\begin{remark}\label{htpyequivalencemap}
    In fact, given the fact that compactified vector spaces are \textit{contractible} (see Remark \ref{contractibility}), we can see that $X^{(n)}_{\log}$ has the same homotopy type as $J_X^{(n)}$ for sufficiently large $n$. This is a \textit{stronger} claim than can be made in Deligne's classical unramified geometric class field theory, where the equivalence is only of $\pi_1$'s. (Indeed, in classical unramified geometric class field theory the fiber of $X^{(n)} \to J_{X}^{n}$ is a projective space, which has nontrivial homotopy.)
\end{remark}

\subsection{Example: \texorpdfstring{$\P^1$ with Log at 0 and $\infty$}{P1 with Log at 0 and infty}}  In this subsection we will examine a concrete example: $\P^1$ with two marked points. Specifically, consider $\P^1$ with a log structure at 0 and $\infty$. As a scheme, the symmetric power $(\P^1)^{(n)} = \P^N$, and the log structure becomes the log structure of two hyperplanes; in homogeneous coordinates, we have the map 

\[
(\P^1)^{(n)} \to \P^n: \{[x_0: y_0], \ldots, [x_n: y_n]\} \mapsto [a_0:\ldots:a_n]
\]

\noindent where the $a_i$ are given by the coefficients of

\[
f(X,Y) := \prod_{i=0}^{n} (Yx_i - Xy_i) = \sum_{i=0}^{n} a_i X^i Y^{n-i}
\]

\noindent written as symmetric homogeneous polynomials of $x_i$ and $y_i$. The hyperplane sections are then given by $a_0 = 0$ (corresponding to $S^{(0)}$) and $a_n = 0$ (corresponding to $S^{(\infty)}$). Thus locally on an affine test log scheme $T$, we find that $(\P^1)^{(n)}_{\log}(T)$ is given by:

\[
\{(\mathcal{M}_T \times \mathcal{O}_T\times \cdots\times \mathcal{O}_T\times \mathcal{M}_T): \text{these $n+1$ factors do not simultaneously vanish}\}/\mathcal{O}_T^{\times},
\]

\noindent where we mean that the $n-1$ middle factors of $\mathcal{O}_T$, along with $\alpha$ of the $\mathcal{M}_T$ factors on the flanks, must generate the unit ideal in $\mathcal{O}_T$.

On the other hand, the Jacobian of $\P^1$ is trivial, so that:

\[
\overline{J}_{X,S}^n = \mathbb{G}_m^{\log}.
\]

\noindent After trivializing the copy of $\mathcal{M}_T$ at infinity, we can represent the map $\psi_n$ by 

\[
(m_0, f_1, \ldots, f_{n-1}, m_n) \mapsto m_0m_n^{-1} \in \G_m^{\log}(T).
\]

\noindent We see that the fiber is $\{(m_0, f_1, \ldots f_{n-1}, m_n)\}/\mathcal{O}_T^{\times}$ with fixed $m_0 m_n^{-1}$, which, for valuative $T$, is given by 

\[
\{(\mathcal{O}_T\times \cdots\times \mathcal{O}_T\times\mathcal{M}_T): \text{these $n$ factors do not simultaneously vanish}\}/\mathcal{O}_T^{\times};
\]

\noindent this is a compactified vector space.

\begin{remark}
    Observe that in the case of $n=1$, we have the homotopy equivalence $\P^1_{\log} \to \G_m^{\log}$ discussed in Section \ref{htpylogstack}. This is a special case of the general claim that $X^{(n)}_{\log}$ and $\overline{J}_{X, S}^{n}$ have the same homotopy type (see Remark \ref{htpyequivalencemap}).
\end{remark}

\section{Logarithmic Geometric Class Field Theory}\label{CFTLogGeometry}

\subsection{The Fundamental Correspondence} Theorem \ref{mainthm} now provides us with the following.

\begin{thm}\label{LogGCFT}
    Let $\overline{x}$ be a geometric point of $X$ lying over a rational point $x \in X(k)$. There is a canonical equivalence between the categories of
    
    1) 1-dimensional $\ell$-adic local systems $\mathfrak{L}$ on $X(\log S)$, with a fixed isomorphism $\mathfrak{L}_{\overline{x}} \simeq \overline{\mathbb{Q}_{\ell}}$; and 
    
    2) multiplicative 1-dimensional $\ell$-adic local systems (character sheaves) on $\overline{\Pic}_{X,S}$.
\end{thm}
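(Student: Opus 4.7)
The plan is to mimic Deligne's strategy, with Theorem \ref{mainthm} and Proposition \ref{simpleconnfiber} as the essential geometric inputs. Starting from a rank-one local system $\mathfrak{L}$ on $X(\log S) = X^{(1)}_{\log}$, the first step is to construct a compatible family of symmetric external powers $\mathfrak{L}^{(n)}$ on $X^{(n)}_{\log}$: form $\mathfrak{L}^{\boxtimes n}$ on $X^n_{\log}$ (the $n$th self-product in fs log schemes) and take $S_n$-invariants along the Kummer-\'etale quotient $X^n_{\log}\to X^{(n)}_{\log}$. By construction these are multiplicative with respect to the monoidal maps (\ref{monoidalstrpart}): the pullback of $\mathfrak{L}^{(m+n)}$ along $X^{(m)}_{\log}\times X^{(n)}_{\log}\to X^{(m+n)}_{\log}$ is canonically $\mathfrak{L}^{(m)}\boxtimes\mathfrak{L}^{(n)}$.

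Next, for $n\ge 2g-1+|S|$, Theorem \ref{mainthm} realizes $\psi_n$ as a valuative-local compactified vector bundle, and Proposition \ref{simpleconnfiber} says such bundles induce isomorphisms of $\pi_1^{\log}$, hence equivalences of the corresponding categories of local systems. Once the valuative-local statement is promoted to a global one, there exists a unique local system $\mathcal{F}^{(n)}$ on $\overline{J}^n_{X,S}$ with $\psi_n^*\mathcal{F}^{(n)}\simeq\mathfrak{L}^{(n)}$. The multiplicativity of the family $\{\mathfrak{L}^{(n)}\}$, together with surjectivity of $\psi_n$ and the group structure on $\overline{\Pic}_{X,S}$ from Section \ref{LogPicRig}, forces canonical isomorphisms $m^*\mathcal{F}^{(m+n)}\simeq\mathcal{F}^{(m)}\boxtimes\mathcal{F}^{(n)}$ for all sufficiently large $m,n$. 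Translating by the Abel-Jacobi image $\mathfrak{AJ}(x)$ of the rational point $x$, I propagate these $\mathcal{F}^{(n)}$ to every component of $\overline{\Pic}_{X,S}$, producing a single character sheaf $\mathcal{F}$ whose rigidification at the identity is determined by the rigidification of $\mathfrak{L}$ at $\overline{x}$.

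The inverse functor normalizes the Abel-Jacobi map using $x$ to land in $\overline{J}^0_{X,S}$ (sending $x_0 \mapsto \mathcal{O}(x_0 - x)$, so that $x$ itself maps to the identity of $\overline{\Pic}_{X,S}$) and then pulls back $\mathcal{F}$; this automatically carries the canonical rigidification of $\mathcal{F}$ at the identity to the required rigidification at $\overline{x}$. That the two constructions are mutually inverse reduces, via uniqueness in the descent step, to the case $n=1$, where $X^{(1)}_{\log}=X(\log S)$ and $\psi_1$ is (up to the translation above) exactly $\mathfrak{AJ}$. \textbf{The main obstacle} I foresee is globalizing the descent along $\psi_n$: Theorem \ref{mainthm} only provides compactified-vector-bundle structure after a valuative base change, so I must upgrade Proposition \ref{simpleconnfiber} to a genuine equivalence of categories of Kummer-\'etale local systems on $\overline{J}^n_{X,S}$ itself. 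I expect to handle this via the logarithmic homotopy exact sequence (\ref{htpyexactstack}) applied to $\psi_n$, combined with the vanishing of $\pi_1^{\log}$ on strictly henselian valuative log bases, so that the fibrewise simple connectedness assembles into a true equivalence of local systems rather than merely of pro-systems of fibers.
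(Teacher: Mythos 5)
Your overall strategy is the same as the paper's: form $\mathfrak{L}^{(n)}=\bigl(p_{n*}\mathfrak{L}^{\boxtimes n}\bigr)^{S_n}$, use Theorem \ref{mainthm} together with Proposition \ref{simpleconnfiber} to descend along $\psi_n$ for $n\ge 2g-1+|S|$, and extend by multiplicativity and translation to all of $\overline{\Pic}_{X,S}$, with the inverse given by pullback along the Abel--Jacobi map. The ``main obstacle'' you flag is in fact resolved exactly as you suggest: Proposition \ref{simpleconnfiber} is proved via the homotopy exact sequence \parens{\ref{htpyexactstack}}, which only ever sees the fibers over (limits of Kummer neighborhoods of) log geometric points, and those limits are valuative; so the valuative-local compactified-vector-bundle structure from Theorem \ref{mainthm} is precisely what the $\pi_1^{\log}$ argument consumes, and no further globalization is needed.

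The genuine gap is earlier, in the construction of $\mathfrak{L}^{(n)}$. You describe $p_n:X(\log S)^n\to X^{(n)}_{\log}$ as a ``Kummer-\'etale quotient,'' but it is not: the quotient by $S_n$ is ramified along the big diagonal (the log structures here are divisorial along $S$, not along the diagonals), so $p_n$ is only finite and log-proper. Consequently it is not automatic that $\bigl(p_{n*}\mathfrak{L}^{\boxtimes n}\bigr)^{S_n}$ is a Kummer-\'etale local system --- this is exactly the subtle point, and it is where the rank-one hypothesis on $\mathfrak{L}$ enters (the stabilizer of a point on a diagonal permutes tensor factors of the stalk, which acts trivially only because the stalks are one-dimensional). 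The paper handles this by applying proper base change in the Kummer-\'etale setting to compute the stalks of $p_{n*}\mathfrak{L}^{\boxtimes n}$ as $\prod_{\sigma\in S_n}\bigotimes_i\mathfrak{L}_{x_{\sigma(i)}}$, identifying the $S_n$-invariants with $\bigotimes_i\mathfrak{L}_{x_i}$, and then using representability of finite-coefficient sheaves to promote the stalkwise computation to local constancy before passing to $\overline{\Q}_\ell$-coefficients. Your proposal needs this step; as written, the premise that makes it unnecessary is false.
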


\begin{proof}
   
    We essentially follow Deligne's argument in the classical case. Given an $\ell$-adic local system $\mathfrak{L}$ on $\overline{\Pic}_{X,S}$, we pull back to $X(\log S)$ under the Abel-Jacobi map $\mathfrak{AJ}$ (Remark \ref{AbelJacobi}). This provides one direction. For the other, consider a local system $\mathfrak{L}$ on $X(\log S)$. We may construct the local system $\mathfrak{L}^{\boxtimes n}:=\mathfrak{L} \boxtimes \cdots \boxtimes \mathfrak{L}$ on the $n$th power $X(\log S)^n$ for each $n \ge 0$. Recall that we have a map $p_n: X(\log S)^n \to X^{(n)}_{\log}$ (\ref{monoidalstrpart}). We define

    \begin{equation}
        \mathfrak{L}^{(n)}: = \left(p_{n*}\mathfrak{L}^{\boxtimes n}\right)^{S_n},
    \end{equation}

    \noindent giving a log-\'etale sheaf $\mathfrak{L}^{(n)}$ on $X^{(n)}_{\log}$ for each $n \ge 0$. Here we take $S_n$-invariants, with $S_n$ acting on $X(\log S)^n$, and so on $p_{n*}\mathfrak{L}^{\boxtimes n}$, via permutation of coordinates. 
    
    Importantly, $\mathfrak{L}^{(n)}$ is a local system; the argument establishing this is essentially similar to the argument presented in \cite{toth2011geometric} in the classical setting. However, this is a slightly subtle point and involves proper base-change and representability of sheaves in the Kummer-\'etale setting \cite{NakLogEtI}, so we will review it here.
    
    Consider a log geometric point $\overline{x} = (x_1, \ldots, x_n) \in X(\log S)^n$. Let $i: p_n(\overline{x}) \to X^{(n)}_{\log}$, and consider the Cartesian square:

    \[
\begin{tikzcd}
p_n^{-1}p_n(\overline{x}) \arrow[d, "p_n'"] \arrow[r, "i'"] & X(\log S)^n \arrow[d, "p_n"] \\
p_n(\overline{x}) \arrow[r, "i"]                            & X^{(n)}_{\log}.              
\end{tikzcd}
    \]

    \noindent The map $X(\log S)^n \to X^{(n)}_{\log}$ is proper (which, as we have discussed, is only true in our logarithmic setting, and \textit{not} true for the classical tamely ramified $U^{(n)}\to\Pic_{X,S}^n$), so we may apply proper base change \cite{NakLogEtI}, which says that $i^*p_{n*} \mathfrak{L}^{\boxtimes n} \to p'_{n*}(i')^*(\mathfrak{L})$ is an isomorphism. Thus 
    
    \[\left(p_{n*} \mathfrak{L}^{\boxtimes n}\right)_{p_n(\overline{x})} \simeq \prod_{\overline{y} \in X(\log S)^n\,:\, p_n(\overline{y}) = \overline{x}}(i')^*(\mathfrak{L})_{\overline{y}}\simeq \prod_{\sigma \in S_n} \left(\bigotimes_i\mathfrak{L}_{x_{\sigma(i)}}\right).\]
    
    \noindent There is a natural isomorphism $\bigotimes_i\mathfrak{L}_{x_{i}} \simeq \prod_{\sigma \in S_n} \left(\bigotimes_i\mathfrak{L}_{x_{\sigma(i)}}\right)^{S_n}$ given by the symmetric embedding, so we see that $\left(\left(p_{n*} \mathfrak{L}^{\boxtimes n}\right)^{S_n}\right)_{p_n(\overline{x})} \simeq \bigotimes_i\mathfrak{L}_{x_{i}}$. Thus $\left((p_n')^*\left(\left(p_{n*} \mathfrak{L}^{\boxtimes n}\right)^{S_n}\right)\right)_{\overline{x}} \simeq \bigotimes_i\mathfrak{L}_{x_{i}} \simeq (\mathfrak{L}^{\boxtimes n})_{\overline{x}}$. These computations let us conclude from stalks that 
    
    \[(p_n')^*\left(\mathfrak{L}^{(n)}\right) = (p_n')^*\left(\left(p_{n*} \mathfrak{L}^{\boxtimes n}\right)^{S_n}\right) \simeq \mathfrak{L}^{\boxtimes n}.\]
    
    So far we have shown that the sheaf $\mathfrak{L}^{(n)}$ has 1-dimensional stalks and that its pullback to the product $X(\log S)^n$ is $\mathfrak{L}^{\boxtimes n}$, which is a Kummer local system. We must show that $\mathfrak{L}^{(n)}$ is a Kummer local system. For a moment, consider finite ($\Z/\ell^n\Z$)-coefficients rather than $\overline{\Q}_{\ell}$ coefficients. We note that since $\mathfrak{L}^{\boxtimes n}$ is a local system, it is representable by some log scheme $Z'$ which is Kummer log-\'etale over $X(\log(S))^n$. Since pushforwards and subsheaves of representable sheaves are representable, we have some log scheme $Z$, Kummer over $X^{(n)}_{\log}$, which will represent $\mathfrak{L}^{(n)}$. 
    
    As above, let $\overline{x}$ be a geometric point of $X(\log(S))^n$ and $\overline{y} = p_n(\overline{x}) \in X^{(n)}_{\log}$ the image of the point ``downstairs". By representability, we will have a Kummer \'etale neighborhood $U$ of $\overline{y}$ and a map $U \to Z$ generating the (1-dimensional) stalk at $\overline{y}$. Pulling this back to $X(\log(S))^n$, we find that $p_n^{-1}(U) \to  Z\times_{X^{(n)}_{\log}} X(\log S)^n$ generates the (one-dimensional) stalk of $\mathfrak{L}^{\boxtimes n}$ at $\overline{x}$. Thus the pullback generates all the stalks in the neighborhood $p_n^{-1}(U)$, which implies that $U \to Z$ generates all the stalks of $\mathfrak{L}^{(n)}$ in a Kummer-\'etale neighborhood of $\overline{y}$. Hence $\mathfrak{L}^{(n)}$ (with finite coefficients) is locally constant, and we may pass to the inverse system of these coefficients, and invert $\ell$, to show that $\mathfrak{L}^{(n)}$ is a locally constant $\overline{\Q}_\ell$-adic sheaf on the Kummer-\'etale site of $X^{(n)}_{\log}$.
    
    By Theorem \ref{mainthm} and \ref{simpleconnfiber}, the map $X^{(n)}_{\log} \to \overline{J}_{X,S}^{n}$ induces an isomorphism of $\pi_1^{\log}$ for sufficiently large $n$ (in particular, for $n \ge N := 2g-1+|S|$). Moreover, the monoidal structure on $\sqcup_nX^n_{\log}$ (\ref{monoidalstr}) clearly descends to multiplication on $\overline{\Pic}_{X,S}= \sqcup_n \overline{J}_{X,S}^n$. Transferring these local systems to $\overline{J}_{X,S}^{n}$ for sufficiently large $n$, we obtain a local system on $\overline{\Pic}_{X,S}^{\ge N}$ with the multiplicative property of 2).
    
    We then may use this multiplicativity along with the fixed rigidification over $\overline{x}$ to extend the sheaf to all of $\overline{\Pic}_{X,S}$. This, along with the fact that these two constructions are inverses, is a verbatim repeat of Deligne's argument in the unramified case (\cite{toth2011geometric}, Section 2.2). 
\end{proof}

\subsection{The Tamely Ramified Artin Map} Let us say that $k = \F_q$ is a finite field. Theorem \ref{LogGCFT} permits us to immediately re-derive the familiar results for tamely ramified global class field theory of a function field in one variable over $\F_q$. 

Firstly, we observe that, giving $\F_q$ trivial log structure, $X(\log S)(\F_q) = U(\F_q)$; $X^{(n)}_{\log}(\F_q) = U^{(n)}(\F_q)$; and  $\overline{\Pic}_{X,S}(\F_q) = \Pic_{X,S}(\F_q)$ (with $\Pic_{X,S}$ the classical rigidified Picard; see Section \ref{ClassicalRigPic}). This reduces us to the case of classical tamely ramified geometric class field theory, discussed in detail in \cite{toth2011geometric}, Section 4.2. We will review this story, leaving some technical details to loc. cit..

Recall that if $p \in U(\F_q)$, then choosing a geometric point $\overline{p}$ above $p$, we have a map $\pi_1^{et}(\Spec(\F_q), \overline{p}) \to  \pi_1^{et}(U, \overline{p})$; we recall that $\pi_1^{et}(\Spec(\F_q), \overline{p})$ is $\widehat{\Z} = \Gal(\overline{\F_q}/\F_q)$, with a generator given by Frobenius, $\Frob_p$. We may abusively also refer to the image of $\Frob_p$ in $\pi_1^{et}(U, \overline{p})$ as $\Frob_p$. Taking $\pi_1^{et}(U,\overline{x})$ with respect to a different geometric point $\overline{x}$ results in a non-canonically isomorphic $\pi_1$; however, the two abelianizations of these groups \textit{are} canonically isomorphic. We can therefore write the abelianization of $\pi_1$ without reference to a basepoint; e.g., $\pi_1^{et,\textrm{ab}}(U)$. We write $[\Frob_p]$ for the image of $\Frob_p$ in this group.

We have a ``reciprocity" map $\sqcup_n X^{(n)}_{\log}(\F_q) = \sqcup_n U^{(n)}(\F_q) \to \pi_1^{t,\textrm{ab}}(U) =  \pi_1^{\log, \textrm{ab}}(X(\log S))$ given by 

\begin{equation}\label{RecMap}
\textrm{Rec}: \{p_1, \ldots , p_n\} \mapsto \prod_{i=1}^n [\Frob_{p_i}].
\end{equation}

\noindent (Note that we permit the collection $\{p_1, \ldots p_n\}$ to be a multi-set, so that we may have powers of $[\Frob_p]$ on the right-hand side of \ref{RecMap}.) 

We claim that the reciprocity map \ref{RecMap} descends to $\overline{\Pic}_{X,S}(\F_q) = \Pic_{X,S}(\F_q)$.  To see this, consider a character $\chi: \pi_1^{\log, \ab}(X(\log S)) \to \overline{\Q_{\ell}^{\times}}$. Given a geometric point $x$, we see that this induces a character of $\pi_1^{\log}(X(\log S), \overline{x})$ via precomposing with the surjection $\pi_1^{\log}(X(\log S), \overline{x}) \to \pi_1^{\log, \textrm{ab}}(X(\log S), \overline{x})$. This in turn corresponds to a 1-dimensional local system on $X(\log S)$ (with rigidification data at $\overline{x}$). 

Now we may apply our Theorem \ref{LogGCFT}; from this 1-dimensional local system on $X(\log S)$ we obtain a canonically-associated multiplicative local system on $\overline{\Pic}_{X,S}$. By Proposition \ref{CharsheavesGmlog}, this multiplicative local system corresponds to a character $\xi_{\chi}: \overline{\Pic}_{X,S}(\F_q) \to \overline{\Q}_{\ell}^{\times}$. A routine verification (\cite{toth2011geometric}, 4.2.3) shows that for all $\chi$ the following diagram commutes:

\begin{equation}\label{RecClassDiagram}
\begin{tikzcd}
\sqcup_n X^{(n)}_{\textrm{log}}(\mathbb{F}_q) \arrow[r, "\textrm{Rec}"] \arrow[d, "\psi"'] & {\pi_1^{\textrm{log}, \textrm{ab}}(X(\log S))} \arrow[d, "\chi"] \\
{\overline{\textrm{Pic}}_{X,S}(\mathbb{F}_q)} \arrow[r, "\xi_\chi"']                       & \overline{\mathbb{Q}_{\ell}}^{\times}.                           
\end{tikzcd}
\end{equation}

\noindent Because we may freely choose the character $\chi$, we deduce that $\textrm{Rec}$ factors through $\psi$ -- in other words, $\textrm{Rec}(D)$ depends only on the rational equivalence class of a divisor $D$. Thus we obtain the tamely ramified global Artin map 

\begin{equation}
\textrm{Artin}: \overline{\Pic}_{X,S}(\F_q) \to \pi_1^{\textrm{log}, \textrm{ab}}(X(\log S)).
\end{equation}

\noindent Moreover, we can see that there is a bijection between finite-index subgroups of $\overline{\Pic}_{X,S}(\F_q)$ and $\pi_1^{\textrm{log}, \textrm{ab}}(X(\log S))$.

Translating everything into ad\`elic language, we let $K$ be the fraction field of $X$, and $\mathbb{I}_K$ the associated group of id\`eles. Set $\mathbb{O}_{K,S} := \prod_{p \in U(\F_q)} \widehat{\mathcal{O}_{p}}^{\times}\times \prod_{p \in S} \widehat{\mathcal{O}_{p}}^{\times(1)}$, where $\widehat{\mathcal{O}_{p}}$ is the 
formal completion of the stalk $\mathcal{O}_{p}$ at $p$, $\widehat{\mathcal{O}_{p}}^{\times}$ is the group of units of this formal completion, and $\widehat{\mathcal{O}_{p}}^{\times(1)} = \{x \in \widehat{\mathcal{O}_{p}}^{\times}: x \equiv 1 \,\,\textrm{mod}\,\,\mathfrak{m}_p\}$. Additionally, set $\mathbb{I}_K^{(S)} := \prod'_{p \notin S}\widehat{K_p}$, set $\mathbb{O}_{K}^{(S)} := \prod_{p \in U(\F_q)} \widehat{\mathcal{O}_{p}}^{\times}$, and let $K_S^{\times} = \{f \in K(X) : f(p) = 1 \textrm{ for all }p \in S\}$.

Recall the Weil uniformization (\cite{MilneCFT}, Proposition 4.6): 

\begin{equation}\label{WeilUniform}
\overline{\Pic}_{X,S}(\F_q) \simeq K^{\times} \backslash \mathbb{I}_K/\mathbb{O}_{K,S} \simeq K^{\times}_S \backslash \mathbb{I}_K^{(S)}/\mathbb{O}_{K}^{(S)}.
\end{equation}

Let $\overline{K^{t,S}}$ be the maximal abelian Galois extension of $K$ that is tamely ramified over $S$. We observe, combining Theorem \ref{FundIso} with the correspondence between \'etale $\pi_1$ and Galois groups, that $\pi_1^{\textrm{log}, \textrm{ab}}(X(\log S))\simeq\Gal\left(K^{t,S,\ab}/K\right)$. We define a homomorphism, also somewhat abusively called $\textrm{Rec}: \mathbb{I}_K^{(S)}/\mathbb{O}_K^{(S)} \to \Gal\left(K^{t,S,\ab}/K\right)$, given by:

\[(a_p)_p \mapsto \prod_p [\Frob_p]^{v_p(a_p)}.\]

\noindent This manifestly factors through $\mathbb{O}_K^{(S)}$; it corresponds, at least when $v_p(a_p) \ge 0$ for all $p \in U(\F_q)$, to the geometric map (\ref{RecMap}). The fact that the geometric $\textrm{Rec}$ (\ref{RecMap})  factors through $\overline{\Pic}_{X,S}(\F_q)$ (which, crucially, is what required the full power of Theorem \ref{LogGCFT} in our argument above) is tantamount to the claim that the id\`elic $\textrm{Rec}$ is trivial on global rational functions $K^{\times}_S \subset \mathbb{I}_K^{(S)}$.

Bringing everything together, we see that finite-index subgroups of $K^{\times} \backslash \mathbb{I}_K/\mathbb{O}_{K,S}$ correspond to finite index subgroups of $\Gal\left(K^{t,S,\ab}/K\right)$; in other words, there is a canonical bijection between finite-index subgroups of $K^{\times} \backslash \mathbb{I}_K/\mathbb{O}_{K,S}$ and abelian extensions of $K$, tamely ramified over $S$. We summarize this as follows:

\begin{thm}\label{ClassClassfieldtheory}
    There is a map $\textrm{Artin}: \overline{\Pic}_{X,S}(\F_q) \to \pi_1^{\textrm{log}, \textrm{ab}}(X(\log S))$, that corresponds to the tamely ramified global Artin map under the Weil uniformization $\overline{\Pic}_{X,S}(\F_q) \simeq K^{\times} \backslash \mathbb{I}_K/\mathbb{O}_{K,S} \simeq K^{\times}_S \backslash \mathbb{I}_K^{(S)}/\mathbb{O}_{K}^{(S)}$, and under the isomorphism $\pi_1^{\log\textrm{ab}}(X(\log S)) \simeq \Gal\left(K^{t,S,\ab}/K\right)$. This map induces isomorphisms on finite quotients, showing that abelian extensions of $K$ tamely ramified over $S$ correspond to finite-index subgroups of $\Pic_{X,S}(\F_q)$.
\end{thm}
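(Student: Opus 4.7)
The plan is to organize and formalize the material in the paragraphs immediately preceding the theorem, which already contains the essential ingredients. I would separate the argument into three steps: constructing the map $\textrm{Artin}$, matching it with the adelic reciprocity map under the Weil uniformization, and showing the induced map on finite quotients is an isomorphism.

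\emph{Step 1: Constructing $\textrm{Artin}$.} I start with the naive reciprocity map $\textrm{Rec}: \sqcup_n U^{(n)}(\F_q) \to \pi_1^{\log, \ab}(X(\log S))$ of (\ref{RecMap}) and show it factors through $\psi: \sqcup_n U^{(n)}(\F_q) \to \overline{\Pic}_{X,S}(\F_q)$. To do this, I test against characters: any character $\chi$ of $\pi_1^{\log, \ab}(X(\log S))$ corresponds via pullback along $\pi_1^{\log}(X(\log S), \overline{x}) \twoheadrightarrow \pi_1^{\log, \ab}(X(\log S))$ to a rigidified one-dimensional local system $\mathfrak{L}_\chi$ on $X(\log S)$. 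By Theorem \ref{LogGCFT} this produces a multiplicative local system on $\overline{\Pic}_{X,S}$, which by Proposition \ref{CharsheavesGmlog} is in turn a character $\xi_\chi$ of $\overline{\Pic}_{X,S}(\F_q)$. The sheaves-functions dictionary, applied to $\mathfrak{L}_\chi$ and its transport under the Abel-Jacobi map $\mathfrak{AJ}$ (Remark \ref{AbelJacobi}), shows that diagram (\ref{RecClassDiagram}) commutes on closed points of $U$. Since characters separate points of the finite abelian group $\overline{\Pic}_{X,S}(\F_q)$, the equality $\chi \circ \textrm{Rec} = \xi_\chi \circ \psi$ for all $\chi$ forces $\textrm{Rec}$ to descend along $\psi$, producing $\textrm{Artin}$.

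\emph{Step 2: Adelic identification.} Next I invoke the Weil uniformization (\ref{WeilUniform}) and Corollary \ref{FundIso}, which identifies $\pi_1^{\log, \ab}(X(\log S))$ with $\Gal(K^{t,S,\ab}/K)$. Under these identifications, $\mathfrak{AJ}(p)$ is sent to the idele class of a uniformizer at $p$ for any $p \in U(\F_q)$, so $\textrm{Artin}$ agrees with the adelic reciprocity map $(a_p)_p \mapsto \prod_p [\Frob_p]^{v_p(a_p)}$ on the dense image of effective divisors. The fact, proved in Step 1, that the geometric $\textrm{Rec}$ descends to $\overline{\Pic}_{X,S}(\F_q)$ translates into the statement that this adelic map kills the principal ideles $K_S^\times$, recovering the classical ``automorphy'' of the Artin map.

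\emph{Step 3: Bijection on finite quotients.} The bijection in Theorem \ref{LogGCFT} together with Proposition \ref{CharsheavesGmlog} shows that pullback along $\textrm{Artin}$ induces a bijection between the group of $\overline{\Q}_\ell^\times$-valued characters of $\overline{\Pic}_{X,S}(\F_q)$ and that of $\pi_1^{\log, \ab}(X(\log S))$. Since a homomorphism between a finite group and a profinite group is determined, up to finite quotients, by its character group, it follows that $\textrm{Artin}$ induces isomorphisms on every finite quotient. This yields the usual bijective correspondence between finite abelian extensions of $K$ tamely ramified over $S$ and finite-index subgroups of $\overline{\Pic}_{X,S}(\F_q)$.

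The main obstacle is the commutativity of diagram (\ref{RecClassDiagram}) in Step 1, and specifically that the assignment $\mathfrak{L} \mapsto \mathfrak{L}^{(n)} = (p_{n*}\mathfrak{L}^{\boxtimes n})^{S_n}$ tracks Frobenius correctly: the trace of Frobenius on the stalk of $\mathfrak{L}^{(n)}$ at a degree-$n$ $\F_q$-divisor $\{p_1, \ldots, p_n\}$ must equal $\prod_i \Tr(\Frob_{p_i} \mid \mathfrak{L}_{\overline{p_i}})$. This is the place where the logarithmic upgrade is used essentially, since it relies on proper base change in the Kummer-\'etale topology (as invoked already in the proof of Theorem \ref{LogGCFT}) and on compatibility of $S_n$-invariants with the Frobenius action, and cannot be made to work with the classical $U^{(n)} \to \Pic_{X,S}^n$ without an ad hoc compactification.
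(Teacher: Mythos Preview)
Your proposal is correct and follows essentially the same approach as the paper: the argument in the text leading up to the theorem is precisely the three-step outline you give (descend $\textrm{Rec}$ through $\psi$ by testing against characters via Theorem~\ref{LogGCFT} and Proposition~\ref{CharsheavesGmlog}, translate ad\`elically via the Weil uniformization, and read off the bijection on finite quotients from the character bijection).

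One small slip worth fixing: you twice refer to $\overline{\Pic}_{X,S}(\F_q)$ as a \emph{finite} abelian group, but it is not finite---it is an extension of $\Z$ (via degree) by the finite group $\overline{J}_{X,S}(\F_q)$. This does not damage the argument: in Step~1 the point is that continuous characters of the profinite abelian group $\pi_1^{\log,\ab}(X(\log S))$ separate points of the \emph{target}, and in Step~3 the correct formulation is that the character groups (with values in $\overline{\Q}_\ell^{\times}$) are in bijection, which for a finitely generated abelian group mapping to a profinite abelian group still yields the isomorphism on finite quotients.
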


\subsection{Logarithmic Structures and Local-to-Global Compatibility}\label{locglobcomp} In \ref{RecClassDiagram}, we considered only $\F_q$-points, which effectively ignores the logarithmic structures which are present, and brings us back to the analysis of the classical $\Pic_{X,S}$. (It is worth noting, however, that the logarithmic upgrade improved the proofs of Theorem \ref{mainthm} and Theorem \ref{LogGCFT} by bypassing ad hoc compactifications.) We shall presently show how the logarithmic structures permit us to understand local-to-global behavior on the ramified locus $S$.

Firstly, we observe the following, which can be viewed as logarithmic tamely ramified local class field theory:

\begin{prop}\label{LocLogCFT} (Local Logarithmic Class Field Theory.)
    There is a canonical map $\textrm{Rec}_{\textrm{loc}}: \G_m^{\log}(\F_q^{\log}) \to \pi_1^{\log \ab}(\F_q^{\log})$. The latter group is the profinite completion of the former.
\end{prop}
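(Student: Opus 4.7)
The plan is to prove the proposition by computing both groups explicitly, constructing the canonical map from the decompositions, and then verifying canonicity via Kummer theory on the Kummer log \'etale site.

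First, I would explicitly compute $\G_m^{\log}(\F_q^{\log})$. Since $\F_q^{\log}$ has log structure associated to the constant pre-log structure $\N \to \F_q$ sending $1 \mapsto 0$, the associated monoid at global sections is $\mathcal{M}(\F_q^{\log}) \cong \N \oplus \F_q^{\times}$, so $\G_m^{\log}(\F_q^{\log}) = \mathcal{M}^{\gp}(\F_q^{\log}) \cong \Z \oplus \F_q^{\times}$. On the other side, Proposition \ref{tamegroupisos} gives $\pi_1^{\log, \ab}(\F_q^{\log}) \cong \F_q^{\times} \times \widehat{\Z}$, where $\F_q^{\times} = \mu_{q-1}$ is canonically the Frobenius coinvariants of the tame inertia $\widehat{\Z(1)}'$, and $\widehat{\Z}$ is generated by arithmetic Frobenius.

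Second, I would define $\textrm{Rec}_{\textrm{loc}}: \Z \oplus \F_q^{\times} \to \F_q^{\times} \times \widehat{\Z}$ as the identity on the $\F_q^{\times}$-factor together with the canonical inclusion $\Z \hookrightarrow \widehat{\Z}$. Equivalently, the uniformizer $1 \in \N$ is sent to Frobenius, and the unit group $\F_q^{\times}$ is mapped onto the inertia summand. To justify the canonicity of these identifications, I would use Kummer theory: for each $n$ coprime to $p$, the Kummer sequence
\[
1 \to \mu_n \to \G_m^{\log} \xrightarrow{n} \G_m^{\log} \to 1
\]
is exact on the Kummer log \'etale site, and the boundary map yields
\[
\G_m^{\log}(\F_q^{\log})/n \to H^1_{\textrm{ket}}(\F_q^{\log}, \mu_n) = \Hom_{\textrm{cont}}(\pi_1^{\log, \ab}(\F_q^{\log}), \mu_n).
\]
A quick cardinality count (both sides have order $n \cdot \gcd(n, q-1)$) shows that this is an isomorphism, assuming $H^1_{\textrm{ket}}(\F_q^{\log}, \G_m^{\log}) = 0$. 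Taking Pontryagin duals and passing to the limit over $n$ coprime to $p$ pins down the prime-to-$p$ part of $\textrm{Rec}_{\textrm{loc}}$ canonically. The $p$-part of the Frobenius factor is then canonically determined by the forgetful map $\pi_1^{\log, \ab}(\F_q^{\log}) \to \pi_1^{\textrm{\'et}}(\Spec\,\F_q) = \widehat{\Z}$, which is split by Frobenius.

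Finally, the profinite completion claim is essentially immediate from the explicit decompositions: $\F_q^{\times}$ is finite hence equal to its profinite completion, and $\widehat{\Z}$ is the profinite completion of $\Z$. Therefore the profinite completion of $\G_m^{\log}(\F_q^{\log}) = \Z \oplus \F_q^{\times}$ is $\widehat{\Z} \times \F_q^{\times} = \pi_1^{\log, \ab}(\F_q^{\log})$, and $\textrm{Rec}_{\textrm{loc}}$ is precisely this completion map.

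The main obstacle I anticipate is establishing the vanishing $H^1_{\textrm{ket}}(\F_q^{\log}, \G_m^{\log}) = 0$ cleanly (which amounts to showing that $\G_m^{\log}$-torsors over the standard log point are trivial), together with ensuring that the $p$-part of the construction is handled coherently rather than bolted on. An alternative route to avoid this would be to work directly with Kummer log \'etale covers of $\F_q^{\log}$: a cyclic Kummer cover of order $n$ coprime to $p$ is classified by an element of $\G_m^{\log}(\F_q^{\log})/n$ via adjunction of $n$-th roots of sections of $\mathcal{M}$, and simultaneously corresponds to a character of $\pi_1^{\log, \ab}(\F_q^{\log})$ of order dividing $n$, giving the pairing directly without appealing to $H^1$ vanishing.
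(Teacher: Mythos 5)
Your computation of both groups and the resulting map agree with the paper, but your route to canonicity is genuinely different. The paper pins down the identification $\pi_1^{\log,\ab}(\F_q^{\log}) \cong \F_q^{\times}\times\widehat{\Z}$ by writing down the universal abelian Kummer cover explicitly: the log point with underlying scheme $\Spec(\overline{\F_q})$ and monoid $\frac{1}{q-1}\N$, on which $(u,t)\in\F_q^{\times}\times\widehat{\Z}$ acts by $\left(x,\frac{m}{q-1}\right)\mapsto\left(u^m x^{q^t},\frac{m}{q-1}\right)$; the canonical map is then read off directly from this cover. You instead invoke the log Kummer sequence on the Kummer \'etale site and a duality/cardinality argument. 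Your approach is more conceptual and is the one that generalizes (it is the standard Kummer-theoretic construction of local reciprocity), while the paper's is more elementary and avoids any cohomological input. Your closing ``alternative route'' --- classifying cyclic Kummer covers by adjunction of $n$-th roots of sections of $\mathcal{M}$ --- is essentially the paper's argument.

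Two wrinkles in your version are worth flagging. First, the identification $H^1_{\textrm{ket}}(\F_q^{\log},\mu_n)=\Hom_{\textrm{cont}}(\pi_1^{\log,\ab}(\F_q^{\log}),\mu_n)$ is only literally correct when $\mu_n$ is constant over $\F_q$, i.e.\ $n\mid q-1$; for general $n$ prime to $p$ the coefficients carry a nontrivial Frobenius action and one must either compute the twisted $H^1$ (whose order is still $n\cdot\gcd(n,q-1)$, so your count survives) or base-change to a field containing $\mu_n$ and descend. Second, the vanishing $H^1_{\textrm{ket}}(\F_q^{\log},\G_m^{\log})=0$ that you identify as the main obstacle is not actually needed: the boundary map $\G_m^{\log}(\F_q^{\log})/n\to H^1_{\textrm{ket}}(\F_q^{\log},\mu_n)$ is automatically injective from the long exact sequence, so equality of the two finite cardinalities already forces it to be a bijection. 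With those two points tidied up, your argument is a valid alternative proof.
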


\noindent Observe that this is analogous to the local reciprocity map $\G_m(F) = F^{\times} \to \Gal(F^{\ab}/F)$ for $F$ a local field with residue field $\F_q$. At the tamely ramified level this corresponds to $F^{\times}/\mathcal{O}_F^{\times (1)}\to \Gal(F^{t,\ab}/F)$ (with $F^{t,\ab}$ the maximal tamely ramified extension of $F$), which recognizably correspond to the two sides of Proposition \ref{LocLogCFT}.

If we write $\Spec(\mathcal{O}_F) = D$ and $\Spec(F) = D^*$, where $D$ is the formal disk and $D^*$ is the punctured formal disk, then we may endow $D$ with the standard divisorial log structure (coming from $\mathfrak{m}_F\subset \mathcal{O}_F$), which we may view as $D^*$ with a logarithmic boundary (visually: the puncture becomes a closed circle). 
Then the inclusion of the closed fiber $\F_q^{\log} \to D^{\log}$ induces an isomorphism on fundamental groups (since both are the tamely ramified fundamental group by \ref{FundIso}). (In fact, this is a homotopy equivalence between $\F_q^{\log}$ and $D^{\log}$, though we will not further elaborate on this point.) 
We also notice in passing that everything in this discussion holds true for arithmetic local fields like $\Q_p$ as well as rings of formal series over $\F_q$, though for our purposes we will only be interested in the latter.

\begin{proof}
    Since the two groups are given by $\F_q^{\times}\times \Z$ and $\F_q^{\times} \times \widehat{\Z}$, we may simply note that there is an obvious inclusion from one to the other, which makes the latter the profinite completion of the former. However, it is important that we are careful to show that the correspondence is in fact \textit{canonical}. To accomplish this, we explicitly construct the map in question. 
    
    Firstly, $\G_m^{\log}(\F_q^{\log})$ is canonically $\F_q^{\times} \oplus \Z$. Next, we shall write down $\pi_1^{\log}$ and $\pi_1^{\log, ab}$ in terms of automorphism groups of specific Kummer covers. 
    Observe that an analogue of the ``universal cover" of $\F_q^{\log}$ is $\overline{\F_q}^{Kum}$, defined as follows. The underlying scheme is $\Spec(\overline{\F_q})$, and the log structure is that associated to the monoid $\N_{(p)} = \{a/n\in \Q : (a,n) = 1, p \nmid n, a/n \ge 0\}$ with $\alpha$ sending $\N_{(p)}$ to 0. The automorphisms of the cover $ \overline{\F_q}^{Kum}\to \F_q^{\log}$ give the full $\pi_1^{\log}$ of $\F_q^{\log}$, which can be easily computed to be $\widehat{\Z(1)}'\rtimes_q \widehat{\Z}$. On the other hand, the universal \textit{abelian} cover of $\F_q^{\log}$ is given by $\overline{\F_q^{\log, ab}}$, defined as follows: the underlying scheme is $\Spec(\overline{\F_q})$, with log structure associated to the monoid $\frac{1}{q-1}\N$ (with $\alpha$ sending $\frac{1}{q-1}\N$ to 0). In other words, $M_{\overline{\F_q^{\log, ab}}}= \overline{\F_q}^{\times} \oplus \frac{1}{q-1}\N$.
    
    The cover $\overline{\F_q^{\log, ab}}\to\F_q^{\log}$ has Galois group canonically isomorphic to $\F_q^{\times} \times \widehat{\Z}$, given as follows: for $(u, t) \in \F_q^{\times} \times \widehat{\Z}$, and $\left(x, \frac{m}{q-1}\right) \in \overline{\F_q}^{\times} \oplus \frac{1}{q-1}\N$, we have:

    \[
    (u, t): \left(x, \frac{m}{q-1}\right) \mapsto \left(u^mx^{q^t}, \frac{m}{q-1}\right).
    \]

    \noindent Observe that $u^q = u$ since $u \in \F_q$, and that if $(q-1)| m$ then $u^m = 1$ so this action is well-defined and preserves the inclusion $\F_q^{\times} \oplus \N \subset \overline{\F_q}^{\times} \oplus \frac{1}{q-1}\N$.

    The map $\F_q^{\times} \oplus \Z \to \F_q^{\times} \oplus \widehat{\Z}$ is now canonically associated with $\G_m^{\log}(\F_q^{\log}) \to \pi_1^{\log \ab}(\F_q^{\log})$.
\end{proof}

Consider now $X(\log S)$. We have an obvious map $X(\log S) \to X$, and the fiber over $s \in S \subseteq X(\F_q)$ is a standard log point $\F_q^{\log}$. We shall write this $\F_q^{\log}$-point of $X(\log S)$ as $\log(s)$, and we shall write the fiber of $X(\log S)$ over the formal disk $D_s$ of $s$ as $D_s^{\log}$. 

There is a natural group homomorphism:

\begin{equation}\label{imap}
   i_s: \mathbb{G}_m^{\log}(\log(s)) \to \overline{\Pic}_{X,S}(\F_q).
\end{equation} 

\noindent This is defined as follows: let $m \in \G_m^{\log}(\log(s))$. From this we must produce an $\F_q$-point of $\overline{\Pic}_{X,S}$; that is, a line bundle $\mathcal{L}$ with a section over each fiber $i_{x}^*(\mathcal{L})$ for $x \in S$. Given $m$, we may lift this to an element of $\G_m^{\log}(D_s^{\log})$, which in turn corresponds to a nonzero rational function on $D$; i.e., a nonzero function $f$ on the formal punctured disk $D^*$.

To construct $\mathcal{L}$ we apply Beauville-Laszlo gluing along the formal punctured disk $D_s^{\times}$ \cite{BeauvilleLaszlo1995}. Take a trivial bundle on $X \setminus \{s\}$, restrict to $D_s^*$, and then glue to the trivial bundle over $D$ via $f: \textrm{Triv}_{X\setminus\{s\}}|_{D_s^*} \to \textrm{Triv}|_D$. The section corresponding to 1 on the special fiber of $D$ and $1 \in \Gamma(\textrm{Triv})$ for all $s' \ne s$ in $S$, gives the  corresponding rigidification data. Observe that this is well-defined since $f$ is unique up to multiplication by $\widehat{\mathcal{O}_s}^{\times (1)}$.

Finally, we note that $\log(s)$ corresponds to an $\F_q^{\log}$-point of $X(\log S)$, the inclusion $\log(s) \to X(\log S)$ induces a map of abelianized fundamental groups:

\begin{equation}\label{s*map}
    s_*:\pi_1^{\log \ab}(\F_q^{\log}) \to \pi_1^{\log \ab}(X(\log S)).
\end{equation}

We now put everything together. The subsequent theorem follows from our constructions:

\begin{thm}\label{loctoglo} (Logarithmic Local-to-global compatibility.) The following diagram commutes:

\begin{equation}
\begin{tikzcd}
\mathbb{G}_m^{\log}(\mathbb{F}_q^{\log}) \arrow[r, "\textrm{Rec}_{\textrm{loc}}"] \arrow[d, "i_s"'] & \pi_1^{\log \textrm{ab}}(\mathbb{F}_q^{\log}) \arrow[d, "s_*"] \\
{\overline{\textrm{Pic}}_{X,S}(\mathbb{F}_q)} \arrow[r, "\textrm{Artin}"]                           & \pi_1^{\log \textrm{ab}}(X(\log S)),                           
\end{tikzcd}
\end{equation}

\noindent and this corresponds to the map

\begin{equation}
\begin{tikzcd}
\widehat{K_s}^{\times}/\widehat{\mathcal{O}_s}^{\times (1)} \arrow[r, "\textrm{Rec}_{\textrm{loc}}"] \arrow[d] & {\textrm{Gal}\left(\overline{K_s^{t,\textrm{ab}}}/K_s\right)} \arrow[d] \\
{K^{\times} \backslash\mathbb{I}_K/\mathbb{O}_{K,S}} \arrow[r, "\textrm{Artin}"]                               & {\textrm{Gal}\left(\overline{K^{t,\textrm{ab}}}/K\right)}              
\end{tikzcd}
\end{equation}

\noindent from class field theory.

\end{thm}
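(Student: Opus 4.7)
The plan is to reduce the commutativity of the top (logarithmic) diagram to the classical local-to-global compatibility of the tamely ramified reciprocity maps, which forms the content of the bottom diagram. The reduction amounts to identifying each vertex and each arrow of the top diagram with its classical counterpart below. Once these identifications are in hand, commutativity is a standard fact in class field theory, so the heart of the proof is bookkeeping.

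First I would assemble the four vertex identifications. The equality $\mathbb{G}_m^{\log}(\mathbb{F}_q^{\log}) = \mathbb{F}_q^\times \oplus \mathbb{Z}$ and the map to $\widehat{K_s}^\times/\widehat{\mathcal{O}_s}^{\times(1)}$ sending a uniformizer $\varpi_s$ to $(1,1)$ and Teichm\"uller lifts of $u \in \mathbb{F}_q^\times$ to $(u,0)$ is a canonical isomorphism; the computation of $\pi_1^{\log,\ab}(\mathbb{F}_q^{\log}) \cong \mathbb{F}_q^\times \times \widehat{\mathbb{Z}}$ in Proposition \ref{tamegroupisos} identifies it with $\operatorname{Gal}(K_s^{t,\ab}/K_s)$ via local class field theory. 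The Weil uniformization gives $\overline{\operatorname{Pic}}_{X,S}(\mathbb{F}_q) \cong K^\times\backslash \mathbb{I}_K/\mathbb{O}_{K,S}$ and Corollary \ref{FundIso} gives $\pi_1^{\log,\ab}(X(\log S)) \cong \operatorname{Gal}(K^{t,\ab}/K)$.

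Next I would verify the four arrow identifications. That $\textrm{Rec}_{\textrm{loc}}$ corresponds to the local tame reciprocity $\widehat{K_s}^\times/\widehat{\mathcal{O}_s}^{\times(1)} \to \operatorname{Gal}(K_s^{t,\ab}/K_s)$ follows directly from the explicit description in the proof of Proposition \ref{LocLogCFT} (which exhibits $\textrm{Rec}_{\textrm{loc}}$ as the profinite completion inclusion, matching the classical local Artin map on the tame quotient). The identification of $\textrm{Artin}$ with the global tame Artin map is precisely Theorem \ref{ClassClassfieldtheory}. For $i_s$, I would unwind the Beauville--Laszlo construction: given $m \in \mathcal{M}_{\log(s)}^{\gp}$, choose a lift to $f \in \widehat{K_s}^\times/\widehat{\mathcal{O}_s}^{\times(1)}$ (well-defined because the ambiguity is absorbed into $\mathbb{O}_{K,S}$), and note that the line bundle glued from trivial bundles via $f$ together with the given rigidifications is, by definition of the Weil uniformization, the class of the id\`ele $(\ldots,1,f,1,\ldots) \in \mathbb{I}_K$ with $f$ in the $s$-coordinate. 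For $s_*$, I would factor $\log(s) \to X(\log S)$ through the log formal disk $D_s^{\log} \to X(\log S)$; since $\pi_1^{\log}(\log(s)) \to \pi_1^{\log}(D_s^{\log})$ is an isomorphism (both realize the tame Galois group of $K_s$ via Corollary \ref{FundIso}), functoriality of $\pi_1^{\log}$ identifies $s_*$ with the natural map from the tame decomposition group at $s$ into the global tame Galois group.

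With these eight identifications in place, the top diagram becomes isomorphic to the bottom diagram, whose commutativity is the classical local-to-global compatibility of reciprocity maps in tamely ramified global class field theory. The main obstacle, and the step I would dwell on, is the identification of $i_s$: one must check carefully that the line bundle produced by Beauville--Laszlo gluing \emph{together with its rigidification data at every $s' \in S$} matches the id\`elic class under the Weil uniformization, and in particular that the indeterminacy of the lift $m \mapsto f$ (which is $\widehat{\mathcal{O}_s}^{\times(1)}$) is precisely the indeterminacy modded out by $\mathbb{O}_{K,S}$ on the id\`elic side. This verification is essentially already present (\cite{MilneCFT}, Proposition 4.6), but one should record that it is compatible with our logarithmic rigidification conventions from Section \ref{LogPicRig}.
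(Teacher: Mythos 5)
Your proposal is correct and takes essentially the same route as the paper, whose entire proof is the one-line remark that the theorem ``follows from our constructions'' of $\textrm{Rec}_{\textrm{loc}}$, $i_s$, $s_*$ and $\textrm{Artin}$ in Section \ref{locglobcomp}; you are simply making explicit the eight vertex/arrow identifications that reduce the top square to the classical local-to-global compatibility. Your emphasis on checking that the Beauville--Laszlo description of $i_s$ matches the Weil uniformization, with the $\widehat{\mathcal{O}_s}^{\times(1)}$-indeterminacy of the lift of $m$ accounting exactly for the quotient by $\mathbb{O}_{K,S}$ at $s$, is precisely the point the paper leaves implicit and is the right place to spend the effort.
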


Thus we see that the log structures at the ramified points give a geometric interpretation of each factor in the full id\`elic product $K^{\times} \backslash \mathbb{I}_K/\mathbb{O}_{K,S}$ whereas the map from $\F_q$-points alone (in \ref{RecClassDiagram}) only witnesses the unramified factors (i.e., those in $K^{\times}_S \backslash \mathbb{I}_K^{(S)}/\mathbb{O}_{K}^{(S)}$). Of course, by \ref{WeilUniform}, these are equivalent; but log geometry has the desirable attribute of allowing us to have a uniform understanding of \textit{all} places in the tamely ramified setting.

\subsection{Extensions} We now describe some potential extensions of the methods of this paper. Most obviously, geometric class field theory is the simplest ($\GL_1$) case of the vast geometric Langlands conjectures \cite{ArinkinGaitsgory2015-SingularSupport}; it is quite plausible that we could apply logarithmic geometry in a similar manner to the above in the context of tamely ramified (global) geometric Langlands for a general reductive group $G$. 

On the other hand, some of the powerful techniques of log geometry have \textit{not} been utilized here; e.g., degenerating the curve $X$. A natural next step would be to consider, for example, logarithmic geometric class field theory for (logarithmic) nodal curves. Additionally, as we have already mentioned, we may be able to extend the geometric underpinning of this work (the ``Div-to-Pic map") using the recently-constructed LogHilb and LogQuot schemes \cite{kennedyhunt}. 

K. Kato has pointed out to the author that another natural extension of this work would be to study the map $\textrm{Ext}^1(\overline{J}, G) \to H^1_{\log} (X(\log S), G)$ for $G$ a \textit{noncommutative} finite group scheme. Finally, we mention that a geometric understanding of higher ``wild" ramification is of great interest \cite{CampbellHayash2021CartierDuality, Guignard2019RamifiedRelativeCurves, Takeuchi2019BlowUps}, and is not explored in this paper. It seems plausible, however, given the geometry of log, that capturing higher ramification in a form analogous to that of the present paper might require some sort of extension of the notion of a logarithmic structure in and of itself. 

\bibliography{refs}{}
\bibliographystyle{alpha}

\end{document}